\newtheorem{theorem}{Theorem}[section]
\newtheorem*{remark}{Remark}
\newtheorem{lemma}[theorem]{Lemma}
\newtheorem{corollary}[theorem]{Corollary}
\newtheorem{proposition}[theorem]{Proposition}
\newcommand{\Sp}{\mathbb S}
\newcommand{\R}{\mathbb{R}}
\newcommand{\C}{\mathbb{C}}
\newcommand{\N}{\mathbb{N}}
\newcommand{\Ex}{\mathbb{E}\,}
\renewcommand{\Pr}{\mathbb{P}\,}
\renewcommand{\Re}{\text{Re}\,}
\newcommand{\dd}{{\rm d}}
\newcommand{\V}{\mathop{\mathrm{V}}\nolimits}
\newcommand{\Var}{\mathop{\mathrm{Var}}\nolimits}
\newcommand{\Gam}{\mathop{\mathrm{Gamma}}\nolimits}
\newcommand{\Bet}{\mathop{\mathrm{Beta}}\nolimits}
\newcommand{\inter}{\mathop{\mathrm{int}}\nolimits}
\newcommand{\cl}{\mathop{\mathrm{cl}}\nolimits}
\newcommand{\Simpl}{\operatorname{Simpl}}
\newcommand{\eqdistr}{\stackrel{D}{=}}
\newcommand{\conv}{\mathop{\mathrm{conv}}\nolimits}
\begin{document}

\title[Simplices in Poisson-Delaunay tessellations]{The volume of simplices in high-dimensional Poisson-Delaunay tessellations}

\author[Anna Gusakova]{Anna Gusakova}
\address{Faculty of Mathematics, Ruhr University Bochum, Germany}
\thanks{AG is supported by the the Deutsche Forschungsgemeinschaft (DFG) via RTG 2131 \textit{High-dimensional Phenomena in Probability -- Fluctuations and Discontinuity}.}
\email{anna.gusakova@rub.de}

\author[Christoph Th\"ale]{Christoph Th\"ale}
\address{Faculty of Mathematics, Ruhr University Bochum, Germany}
\thanks{CT is supported by by the DFG Scientific Network \textit{Cumulants, Concentration and Superconcentration}.}
\email{christoph.thaele@rub.de}

\keywords{Berry-Esseen bound, central limit theorem, cumulants, high dimensions, mod-$\phi$ convergence, moderate deviations, large deviations, random simplex, Poisson-Delaunay tessellation, stochastic geometry}
\subjclass[2010]{52A22, 60D05, 60F05, 60F10}

\begin{abstract}
Typical weighted random simplices $Z_{\mu}$, $\mu\in(-2,\infty)$, in a Poisson--Delaunay tessellation in $\mathbb{R}^n$ are considered, where the weight is given by the $(\mu+1)$st power of the volume. As special cases this includes the typical ($\mu=-1$) and the usual  volume-weighted ($\mu=0$) Poisson--Delaunay simplex. By proving sharp bounds on cumulants it is shown that the logarithmic volume of $Z_{\mu}$ satisfies a central limit theorem in high dimensions, that is, as $n\to\infty$. In addition, rates of convergence are provided. In parallel, concentration inequalities as well as moderate deviations are studied. The set-up allows the weight $\mu=\mu(n)$ to depend on the dimension $n$ as well. A number of special cases are discussed separately. For fixed $\mu$ also mod-$\phi$ convergence and the large deviations behaviour of the logarithmic volume of $Z_{\mu}$ are investigated.
\end{abstract}

\maketitle


\section{Introduction and Selected results}

Establishing probabilistic limit theorems for convex bodies in high dimensions has been and is still one of the driving forces in the branch of mathematics known as Asymptotic Geometric Analysis. The arguably most prominent example is Klartag's central limit theorem \cite{KlartagCLT}. It roughly says that most lower-dimensional marginals of isotropic convex bodies are close to Gaussian distributions. More recently, other functionals and set-ups of high dimensional convex bodies were also investigated. For example, a central limit theorem for the volume of $k$-dimensional random projections of the $n$-dimensional cube with $k$ fixed and $n\to\infty$ is the content of the work of Paouris, Pivovarov and Zinn \cite{PPZ}, and for $k=1$ also moderate and large deviations were investigated \cite{KPT19a,KPT19b}. In addition, several types of limit theorems for the norms of (projections of) random points in the classical $\ell_p^n$-ball were proved by Gantert, Kim and Ramanan \cite{GKR}, Alonso-Guti\'errez, Prochno and Th\"ale \cite{APT18,APT17} and Kabluchko, Prochno and Th\"ale \cite{KPT19a,KPT19b}.

In high dimensional Stochastic Geometry, the probabilistic behaviour of \textit{random} convex sets is investigated when the dimension tends to infinity. For example, generalizing earlier works of Mathai \cite{Mathai82} and Ruben \cite{Ruben77}, Grote, Kabluchko and Th\"ale \cite{GKT17} investigated the logarithmic volume of a class of random simplices in high dimensions. Among other results, they proved that the logarithmic volume of the simplex generated by $n+1$ independent and uniformly distributed random points in the $n$-dimensional Euclidean unit ball satisfies a central limit theorem, as $n\to\infty$. For random simplices (and more general convex bodies) generated by product distributions with sub-exponential tails a similar result was derived by Alonso-Guti\'errez et al.\ \cite{Oberwolfach}. The present paper continues this line of research, but the model we will work with is rather different in the following sense. While the previous works \cite{Oberwolfach,GKT17,Mathai82,Ruben77} deal with a single random simplex, we work with an infinite collection of random simplices and then apply a probabilistic selection procedure to pick one of the simplices. To explain our set-up, we let $\eta$ be a stationary Poisson point process in $\R^n$ with intensity $\gamma\in(0,\infty)$, which might or might not depend on $n$. We now construct the Delaunay tessellation (also called Dalaunay triangulation) in $\R^n$ based on $\eta$. It gives rise to a dissection of $\R^n$ into an almost surely countable collection $\mathscr{D}$ of random simplices having the property that no point of $\eta$ is inside the circumball of any simplex in $\mathscr{D}$. We remark that the tessellation of $\R^n$ induced by $\mathscr{D}$ is dual to the well-known Poisson-Voronoi tessellation, see \cite{SW08}. Next, we select randomly one of the simplices from $\mathscr{D}$ in such a way that each simplex has the same chance of being selected. Intuitively, one can think of restricting $\mathscr{D}$ to the almost surely finite sub-collection of simplices that are contained in a `very large' ball and then selecting one of these simplices according to the uniform distribution, that is, regardless of its size and shape. We shall describe in Section \ref{sec:Del} how to make this construction mathematically rigorous using the notion of Palm distributions. We denote by $Z$ the outcome of the selection, which is (up to a random shift) known as the typical Poisson-Delaunay simplex in the stochastic geometry literature. In this paper we are interested in the probabilistic behaviour of the logarithmic $n$-volume
$$
Y_n:=\log V_n(Z)
$$
of $Z$ in high dimension, that is, as $n\to\infty$. A particular instance of our result we prove in this paper reads as follows. It provides a formula for the asymptotic behaviour of the expectation and the variance of the random variables $Y_n$ (part (i)) and describes their fluctuations in high dimensions (part (ii)) as well as their large deviations behaviour (part (iii)). We recall that $\gamma$ is the intensity of the underlying Poisson point process.

\begin{theorem}\label{thm:Intro}
\begin{itemize}
\item[(i)] As $n\to\infty$, 
$$
\Ex Y_n = -{n\over 2}\log n -\log\gamma +O(n)\qquad\text{and}\qquad\Var Y_n = {1\over 2}\log n+O(1).
$$

\item[(ii)] For a standard Gaussian random variable $N$ one has that
\begin{equation}\label{eq:CLTintro}
\sup_{x\in\R}\Big|\Pr\Big({Y_n-\Ex Y_n\over\sqrt{\Var Y_n}}\leq x\Big)-\Pr(N\leq x)\Big| \leq {c\over\sqrt{\log n}}
\end{equation}
for any $n\geq 3$, where $c\in(0,\infty)$ is an absolute constant. 

\item[(iii)] Define
\[
m_n:=-{n\over 2}\log n-{n\over 2}(\log\pi +1)+{7\over 4}\log n -\log\gamma.
\] 
Then, for each $x\in(0,\infty)$,
$$
\lim_{n\to\infty}{1\over{1\over 2}\log\big({n\over 2}\big)}\log\Pr(Y_n-m_n\geq x) = -{x^2\over 2}.
$$
\end{itemize}
\end{theorem}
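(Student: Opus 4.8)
The plan is to identify the exact distribution of $V_n(Z)$ and then reduce everything to a statement about a sum of independent random variables. By the standard integral-geometric description of the typical Poisson-Delaunay simplex (which is what Section~\ref{sec:Del} presumably establishes), the volume $V_n(Z)$ can be represented --- up to explicit deterministic constants involving $\gamma$, $n$, and $\pi$ --- as a product of a power of a $\Gamma$-distributed random variable (coming from the radius of the circumball) and a Blaschke-Petkantschin-type factor which, when one uses the representation of the simplex in terms of the directions of its vertices on the unit sphere, turns the volume into a product of independent Beta random variables. Concretely, I expect $Y_n = \log V_n(Z)$ to be expressible as a deterministic sequence plus $\sum_{j} \xi_j$ where the $\xi_j$ are independent with explicit log-Gamma and log-Beta laws. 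This is exactly the kind of representation used in \cite{GKT17}, and parts (i) and (ii) are then consequences of the general cumulant analysis the paper carries out.

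Given this representation, part~(i) follows by computing $\Ex\log G$ and $\Var\log G$ for the Gamma variable via the digamma and trigamma functions $\psi$ and $\psi'$, and similarly $\Ex\log B$, $\Var\log B$ for the Beta variables, then summing and invoking the asymptotics $\psi(x)=\log x+O(1/x)$, $\psi'(x)=1/x+O(1/x^2)$, $\sum_{k\le n}1/k=\log n+O(1)$. The dominant term in $\Ex Y_n$ is the $-\tfrac n2\log n$ coming from the deterministic normalization together with the Gamma-mean contribution, with everything else absorbed into $O(n)$; the variance picks up $\tfrac12\log n$ from the harmonic-type sum of the $\psi'$ terms. Part~(ii) is an application of the Berry-Esseen bound that the paper derives from its sharp cumulant estimates: once one knows that the $k$th cumulant of $Y_n$ is controlled by (const)$^k\,k!\,(\log n)$ (or more precisely grows like $(\log n)$ times bounded quantities with the right factorial dependence), the standard cumulant-based Berry-Esseen inequality yields a rate of order $(\Var Y_n)^{-1/2}\asymp(\log n)^{-1/2}$.

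For part~(iii), which is the genuinely new claim here, the strategy is to apply a large/moderate deviation principle at the correct speed. The normalizing constant $m_n$ should be precisely the quantity such that $Y_n - m_n$ is, to leading order, the centered and appropriately re-expanded version of $Y_n$: comparing $m_n$ with the expansion of $\Ex Y_n$ one sees $m_n = \Ex Y_n + (\text{lower-order explicit terms})$, so $Y_n-m_n$ differs from $Y_n-\Ex Y_n$ by an $o(\sqrt{\log n})$ quantity. The key point is that the variance is $\sigma_n^2=\tfrac12\log(n/2)+O(1)$, and one wants $\tfrac{1}{\sigma_n^2}\log\Pr(Y_n-m_n\ge x)\to -\tfrac{x^2}{2}$ for \emph{fixed} $x$. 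This is a moderate-deviations-type statement sitting right at the Gaussian scale; the natural tool is mod-$\phi$ convergence (which the abstract announces is proved for fixed $\mu$) together with the associated precise deviation results of F\'eray-M\'eliot-Nikeghbali. One shows that $Y_n - m_n$ (suitably renormalized) converges in the mod-Gaussian sense with parameter $\sigma_n^2\to\infty$, and then the general theory gives $\Pr(Y_n-m_n\ge x)=\exp(-\tfrac{x^2}{2\sigma_n^2}(1+o(1)))$ for fixed $x$, from which the stated limit follows after dividing by $\sigma_n^2=\tfrac12\log(n/2)+O(1)$.

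The main obstacle I anticipate is twofold. First, verifying the mod-$\phi$ (mod-Gaussian) convergence with the \emph{exactly} right residue function requires controlling the Laplace transform $\Ex e^{z(Y_n-m_n)}$ uniformly on a complex neighbourhood of the origin, which in turn demands the sharp cumulant bounds --- getting the constants and the factorial growth exactly right is the technical heart, and it is here that the product-of-Beta structure and Stirling-type asymptotics for ratios of Gamma functions must be handled with care (in particular the $\Gamma$-function factor $c_n(\mu)$ and its $n\to\infty$ expansion). Second, pinning down $m_n$ to the stated precision (the $\tfrac74\log n$ term, the $-\tfrac n2(\log\pi+1)$ term) means the expansion of $\Ex Y_n$ must be carried to one more order than in part~(i), so the $O(n)$ there has to be opened up; this is routine but delicate bookkeeping with $\log\Gamma$ asymptotics. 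Once the mod-Gaussian statement is in hand with the right parameters, the deviation asymptotic in (iii) is a direct citation.
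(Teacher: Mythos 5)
For parts (i) and (ii) your outline is essentially the paper's argument. The paper obtains a closed formula for $\Ex V_n(Z_\mu)^s$ as a quotient of Gamma functions (formula \eqref{eq:Moments}), reads off the cumulants of $Y_n$ as sums of polygamma values, proves the polygamma-sum identities needed to extract $\Ex Y_n$, $\Var Y_n=\frac12\log n+O(1)$ and the bound $|c_m(Y_n)|\leq C\,(m-1)!$ for $m\geq 3$, and then feeds the normalized cumulant bound $|c_m(\widetilde Y_n)|\leq m!\,\Delta_n^{-(m-2)}$ with $\Delta_n\asymp\sqrt{\log n}$ into the Saulis--Statulevi\v{c}ius machinery. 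Your product-of-independent-Gamma/Beta picture appears in the paper as Theorem \ref{thm:ProbabilisticRepresentation}, with the caveat that the identity there is for $\xi^n(1-\xi)\,(n!\,V_n(Z_\mu))^2$ rather than for $V_n(Z_\mu)$ itself, so $Y_n$ becomes a sum of independent terms only after removing an independent correction; either way one ends up differentiating the same explicit Gamma-quotient, so this is the same proof. One small correction: for $m\geq 3$ the cumulants $c_m(Y_n)$ are bounded uniformly in $n$ (they carry no $\log n$ factor); only the variance grows, and that is exactly why the Berry--Esseen rate is $(\Var Y_n)^{-1/2}\asymp(\log n)^{-1/2}$.

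Part (iii) is where the proposal breaks. You treat $\{Y_n-m_n\geq x\}$ as a Gaussian-tail event at distance $x=O(1)$ and assert $\Pr(Y_n-m_n\geq x)=\exp\bigl(-\frac{x^2}{2\sigma_n^2}(1+o(1))\bigr)$ with $\sigma_n^2=\frac12\log(\frac n2)+O(1)$. This cannot yield the claimed limit: since $\sigma_n^2\to\infty$, that probability stays bounded away from $0$, so $\sigma_n^{-2}\log\Pr(\cdot)\to 0$, not $-\frac{x^2}{2}$. For the stated limit the probability must decay polynomially in $n$, like $(n/2)^{-x^2/4}$; the relevant event is a genuine large deviation at distance of order $\sigma_n^2=\Theta(\log n)$, i.e.\ $\Theta(\sigma_n)$ standard deviations from the centre. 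What one actually extracts from the mod-Gaussian convergence with parameters $w_n=\frac12\log(\frac n2)$ (Theorem \ref{thm:ModPhi}) is either the F\'eray--M\'eliot--Nikeghbali precise deviation $\Pr(Y_n-m_n\geq w_nx)\sim\frac{\psi(x)}{x\sqrt{2\pi w_n}}\,e^{-w_nx^2/2}$, or, as the paper does, the G\"artner--Ellis limit $\lim_{n\to\infty} w_n^{-1}\log\Ex e^{t(Y_n-m_n)}=\frac{t^2}{2}$ whose Legendre transform is $\frac{x^2}{2}$; in both routes the deviation threshold is $w_nx$, not $x$, and what is really being proved is a large deviations principle for the rescaled variable $(Y_n-m_n)/w_n$. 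Your identification of the expansion of $\log\Ex e^{zY_n}$ via Barnes $G$-asymptotics as the technical heart is correct, but the final step confuses the large-deviation scale with the CLT scale.

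A further point you should not gloss over: your claim that $m_n=\Ex Y_n+o(\sqrt{\log n})$ does not check out against the printed centering. Comparing $m_n$ with the expansion of $\Ex Y_n$ from part (i), which contains $-\frac n2\log(2\pi)$ while $m_n$ contains $-\frac n2(\log\pi+1)$, gives $\Ex Y_n-m_n=\frac n2(1-\log 2)+O(1)$, a linearly growing discrepancy. Pinning down the centering to the required precision (i.e.\ opening up the $O(n)$ from part (i)) is precisely the delicate bookkeeping you flagged, and it has to be carried out, not asserted.
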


In particular, letting $n\to\infty$ in \eqref{eq:CLTintro}, we conclude the convergence in distribution
$$
\widetilde{Y}_{n}:={Y_n-\Ex Y_n\over\sqrt{\Var Y_n}} \overset{D}{\longrightarrow} N.
$$
In addition to this central limit theorem we are able to prove an exponential concentration inequality, a Cram\'er-Petrov-type estimate and a Donsker-Varadhan-type moderate deviations principle for the centred and normalized logarithmic volume $\widetilde{Y}_{n}$. Moreover, we prove mod-Gaussian convergence and a large deviations principle for the sequence of random variables $Y_n$ after suitable shift and rescaling. In particular, these results cover the last part of Theorem \ref{thm:Intro}. As already emphasized above, part (ii) of Theorem \ref{thm:Intro} is only a special case of our Central Limit Theorem \ref{thm:Main}. In fact, we are able to deal with other procedures that select simplices from the infinite collection $\mathscr{D}$, where each simplex $c\in\mathscr{D}$ gets weight $V_n(c)^{\mu+1}$ for some $\mu\in(-2,\infty)$ and where a simplex $Z_{\mu}$ is now selected randomly according to these weights. Even more generally, we will allow $\mu$ to vary with $n$ and especially deal with the cases when $\mu$ is fixed or when $\mu=n^\alpha$ for some $\alpha\in(0,1)$, $\mu=\alpha n$ for some $\alpha\in(0,\infty)$ and $n-\mu=o(n)$. On the other hand, mod-Gaussian convergence and the large deviations principle for $\widetilde{Y}_n$ as $n\to\infty$ will be derived only for fixed weight parameter $\mu$.

The proof of Theorem \ref{thm:Intro} (ii) and its generalization, Theorem \ref{thm:Main}, relies on the general limit theory for large deviations of Saulis and Statulevi\v{c}ius \cite{SS91}, which in turn is based on sharp bounds for cumulants. This method has been intensively used during the last decade to derive a number of limit theorems for various quantities. Examples include determinants of random Wigner and block Hankel matrices \cite{DT19,DE13a}, spectral statistics of orthogonal polynomial ensembles \cite{PWZ17}, patterns in random permutations \cite{Hofer17}, weighted dependency graphs \cite{FerayWeightedDependency}, subgraph counts in the Erd\H{o}s-R\'enyi random graph \cite{DE13b,ModPhiBook}, stabilizing functionals in geometric probability \cite{BYY,ERS}, the volume fraction of a Boolean model and of a Poisson cylinder process \cite{Heinrich,HS09}, functionals of random polytopes \cite{GT18a,GT18b}, the volume of random simplices \cite{GKT17} as well as multiple stochastic integrals \cite{ST16}. The method is also well adapted to our situation, since we have directly access to the moment generating function of the random variables under consideration. This also allows for rather direct proofs of Theorem \ref{thm:Intro} (i) and (iii), where the latter relies in addition on an asymptotic analysis of the Barnes $G$-function. Let us remark that while the moments of integer order of the random variables $Y_n$ are known (see \cite{SW08}), we need an extension to non-integer moments as well as a generalization to the weighted random simplices $Z_{\mu}$. We develop the corresponding results in the present paper building on earlier works on beta random polytopes \cite{My4,KTT17}. This in turn also allows neat probabilistic interpretations of $Y_n$ in terms of independent gamma and beta random variables.

\medskip

The remaining parts of this paper are structured as follows. In Section \ref{sec:Del} we formally introduce the weighted random simplices $Z_{\mu}$ and derive an explicit formula for the moment generating function of the logarithmic volume of $Z_{\mu}$. We also present there our probabilistic interpretations for $Y_n$ and its weighted generalization. Section \ref{sec:polygamma} is devoted to a number of specific bounds and asymptotic expansions for polygamma functions. They are needed in Section \ref{sec:CumulantsResults}, where we derive sharp cumulant bounds for the logarithmic volume of $Z_{\mu}$. We also present and prove there our main results, Theorem \ref{thm:Main}, which as a special case includes Theorem \ref{thm:Intro} (ii). Section 5 is devoted to the results on mod-$\phi$ convergence and the large deviation principle for the logarithmic volume of $Z_{\mu}$ when $\mu$ is fixed. This includes a proof of Theorem \ref{thm:Intro} (iii).

\section{Weighted simplices in Poisson-Delaunay tessellations}\label{sec:Del}

\subsection{Description of the model}

Let $n\in\N$ and $\eta$ be a stationary Poisson point process on $\R^n$ with intensity $\gamma\in(0,\infty)$. For a $(n+1)$-tuple $(x_0,\ldots,x_n)$ of distinct points of $\eta$ we denote by $B(x_0,\ldots,x_n)$ the almost surely uniquely determined ball having the points $x_0,\ldots,x_n$ on its boundary. The points $x_0,\ldots,x_n$ then form a Delaunay simplex $\conv(x_0,\ldots,x_n)$ whenever $B(x_0,\ldots,x_n)$ does not contain any further point from $\eta$, that is, if $B(x_0,\ldots,x_n)\cap\eta=\{x_0,\ldots,x_n\}$. The collection $\mathscr{D}$ of all Delaunay simplices is called the Poisson-Delaunay tessellation of $\R^n$, see \cite[Chapter 10.2]{SW08}.

Our next goal is to describe a procedure to randomly select simplices from the tessellation $\mathscr{D}$, i.e., from the almost surely infinite collection of simplices $\mathscr{D}$.  For this purpose we introduce a parameter $\mu\in(-2,\infty)$. Moreover, if $c\in \mathscr{D}$ is a Delaunay simplex with vertices $x_0,\ldots,x_n\in\eta$ we shall write $z(c)$ for the midpoint of the ball $B(x_0,\ldots,x_n)$, that is, the circumcenter of $x_0,\ldots,x_n$. The set of all simplices in $\R^n$ is denoted by $\Simpl_n$. Endowing $\Simpl_n$ with the usual Hausdorff distance, we can define on $\Simpl_n$ the Borel $\sigma$-field $\mathcal{B}(\Simpl_n)$. Writing $\Simpl_n^0:=\{c\in\Simpl_n:z(c)=0\}$ we define a probability measure $\Pr_\mu^0$ on $\Simpl_n^0$ by 
\begin{equation}\label{eq:DefWeightedCellIndicator}
\Pr_\mu^0(A) := {1\over\gamma_\mu}\,\Ex\sum_{c\in \mathscr{D}\atop z(c)\in[0,1]^n}{\bf 1}\{c-z(c)\in A\}\,V_n(c)^{\mu+1},\qquad A\in\mathcal{B}(\Simpl_n^0),
\end{equation}
where $V_n(c)$ stands for the volume of $c$ and where 
$$
\gamma_\mu := \Ex\sum_{c\in \mathscr{D}\atop z(c)\in[0,1]^n}V_n(c)^{\mu+1}
$$
is normalizing constant. By $Z_\mu$ we denote a random simplex with distribution $\Pr_\mu^0$. Using this notation \eqref{eq:DefWeightedCellIndicator} can equivalently be expressed by saying that
\begin{equation}\label{eq:DefWeightedCellWithH}
\Ex h(Z_\mu) = {1\over\gamma_\mu}\,\Ex\sum_{c\in \mathscr{D}\atop z(c)\in[0,1]^n}h(c-z(c))\,V_n(c)^{\mu+1}
\end{equation}
for any non-negative measurable function $h:\Simpl_n^0\to\R$. This construction combines two particularly important special cases. Namely, if $\mu=-1$ then $\Pr_{-1}^0$ is just the distribution of the typical Delaunay simplex (with respect to the circumcenter as centring function) and if $\mu=0$ then $\Pr_{0}^0$ is the distribution of the typical volume-weighted Delaunay simplex (again with respect to the circumcenter as centring function) in the usual sense of Palm theory, see \cite{SW08}. It is well known that up to a random shift $Z_0$ coincides in distribution with the almost surely uniquely determined simplex from $\mathscr{D}$ containing the origin of $\R^n$, cf.\ \cite[Theorem 10.4.1]{SW08}.

We shall now establish a link between the distributions $\Pr_\mu^0$ and $\Pr_{-1}^0$.

\begin{lemma}\label{lem:LinkWeightedTypical}
For all non-negative measurable functions $h:\Simpl_n^0\to\R$ we have
$$
\Ex h(Z_\mu) = {1\over\Ex V_n(Z_{-1})^{\mu+1}}\,\Ex[h(Z_{-1})\,V_n(Z_{-1})^{\mu+1}].
$$
\end{lemma}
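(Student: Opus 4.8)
The plan is to apply the defining relation \eqref{eq:DefWeightedCellWithH} for $Z_\mu$ and for $Z_{-1}$ and to compare the two. The key observation is that the right-hand side of \eqref{eq:DefWeightedCellWithH} for a general $\mu$ differs from the corresponding expression for $\mu=-1$ only by the extra factor $V_n(c)^{\mu+1}$ inside the sum over cells $c\in\mathscr{D}$ with $z(c)\in[0,1]^n$. Since the map $c\mapsto c-z(c)$ sends each such cell to an element of $\Simpl_n^0$ and preserves volume (translation invariance of $V_n$), that factor equals $V_n(c-z(c))^{\mu+1}$. This suggests applying \eqref{eq:DefWeightedCellWithH} with $\mu=-1$ to the test function $c'\mapsto h(c')\,V_n(c')^{\mu+1}$.

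First I would fix a non-negative measurable $h:\Simpl_n^0\to\R$ and set $\widetilde h(c'):=h(c')\,V_n(c')^{\mu+1}$, which is again non-negative and measurable on $\Simpl_n^0$ (measurability of $c'\mapsto V_n(c')$ follows from continuity with respect to the Hausdorff metric). Applying \eqref{eq:DefWeightedCellWithH} with parameter $-1$ to the function $\widetilde h$ gives
\[
\Ex \widetilde h(Z_{-1}) = {1\over\gamma_{-1}}\,\Ex\sum_{c\in\mathscr{D},\, z(c)\in[0,1]^n} h(c-z(c))\,V_n(c-z(c))^{\mu+1},
\]
and since $V_n(c-z(c))=V_n(c)$ this right-hand side equals $\gamma_{-1}^{-1}\Ex\sum_{c\in\mathscr{D},\,z(c)\in[0,1]^n} h(c-z(c))\,V_n(c)^{\mu+1}$. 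Comparing with \eqref{eq:DefWeightedCellWithH} for parameter $\mu$, the sum appearing here is exactly $\gamma_\mu\,\Ex h(Z_\mu)$. Hence $\Ex[h(Z_{-1})\,V_n(Z_{-1})^{\mu+1}] = (\gamma_\mu/\gamma_{-1})\,\Ex h(Z_\mu)$, i.e. $\Ex h(Z_\mu) = (\gamma_{-1}/\gamma_\mu)\,\Ex[h(Z_{-1})\,V_n(Z_{-1})^{\mu+1}]$.

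It remains to identify the constant $\gamma_{-1}/\gamma_\mu$ with $1/\Ex V_n(Z_{-1})^{\mu+1}$. This follows by taking $h\equiv 1$ in the identity just derived, or equivalently by applying \eqref{eq:DefWeightedCellWithH} with $\mu=-1$ to the test function $c'\mapsto V_n(c')^{\mu+1}$: it yields $\Ex V_n(Z_{-1})^{\mu+1} = \gamma_{-1}^{-1}\Ex\sum_{c\in\mathscr{D},\,z(c)\in[0,1]^n} V_n(c)^{\mu+1} = \gamma_\mu/\gamma_{-1}$. Substituting this back completes the proof. There is essentially no hard step here; the only points that require a word of care are the measurability and non-negativity of $\widetilde h$ and the fact that we are allowed to choose $\mu+1>-1$, so that $V_n(c')^{\mu+1}$ is finite and the identities are not vacuous — but $\mu\in(-2,\infty)$ guarantees exactly this, and both $\gamma_\mu$ and $\Ex V_n(Z_{-1})^{\mu+1}$ are finite by the moment formulas developed later in Section~\ref{sec:Del}.
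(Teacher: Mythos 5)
Your argument is correct. Both you and the paper end up with the same two-step structure: first establish the identity $\gamma_\mu\,\Ex h(Z_\mu)=\gamma_{-1}\,\Ex[h(Z_{-1})\,V_n(Z_{-1})^{\mu+1}]$, then pin down the constant by setting $h\equiv 1$. Where you differ is in how the first identity is obtained. The paper views $\mathscr{D}$ as a stationary particle process, invokes the factorization $\Lambda=\gamma_{-1}\,\Pr_{-1}^0\otimes\lambda_n$ of its intensity measure and Campbell's theorem, and thereby converts the expected sum over cells into an integral against $\Pr_{-1}^0$. You instead apply the defining Palm relation \eqref{eq:DefWeightedCellWithH} twice --- once with parameter $\mu$ and test function $h$, once with parameter $-1$ and the modified test function $c'\mapsto h(c')V_n(c')^{\mu+1}$ --- and match the two right-hand sides using translation invariance of $V_n$. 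Your route is more self-contained: it needs nothing beyond \eqref{eq:DefWeightedCellWithH} (which the paper has already declared equivalent to the definition \eqref{eq:DefWeightedCellIndicator} for all non-negative measurable $h$, via the usual monotone-class argument) and the elementary fact $V_n(c-z(c))=V_n(c)$. The paper's route makes explicit the structural reason the lemma holds, namely that all the weighted Palm distributions are absolutely continuous with respect to $\Pr_{-1}^0$ because they arise from the same disintegration of the same intensity measure; that perspective is reused implicitly elsewhere in Section 2. Your closing remarks on measurability, non-negativity, and the finiteness of $\gamma_\mu$ for $\mu\in(-2,\infty)$ are the right points of care and are handled adequately.
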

\begin{proof}
We can consider $\mathscr{D}$ as a stationary particle process of simplicial particles in $\R^n$ in the sense of \cite[Chapter 4]{SW08}. Each simplex $c\in\Simpl_n$ can be identified with a pair $(c_0,z)\in\Simpl_n^0\times\R^n$, where $c_0=c-z(c)$ and $z=z(c)$. Since $\mathscr{D}$ is a stationary particle process, its intensity measure $\Lambda$ admits the factorization $\Lambda=\gamma_{-1}\,\Pr_{-1}^0\otimes\lambda_n$ according to \cite[Theorem 4.1.1]{SW08}, where $\lambda_n$ stands for the Lebesgue measure on $\R^n$. Applying now Campbell's theorem (Theorem 3.1.2 in \cite{SW08}) and the translation invariance of $V_n$ we find that, for any non-negative measurable function $h:\Simpl_n^0\to\R$,
\begin{align*}
\gamma_\mu\,\Ex h(Z_\mu) &= \Ex\sum_{c\in\mathscr{D}\atop z(c)\in[0,1]^n}h(c-z(c))\,V_n(c)^{\mu+1}\\
&=\int\limits_{\Simpl_n}{\bf 1}\{z(c)\in[0,1]^n\}\,h(c-z(c))\,V_n(c)^{\mu+1}\,\Lambda(\dd c)\\
&=\gamma_{-1}\int\limits_{\Simpl_n^0}\int\limits_{\R^n}{\bf 1}\{z\in[0,1]^n\}\,h(c_0)\,V_n(c_0)^{\mu+1}\,\Pr_{-1}^0(\dd c_0)\,\lambda_n(\dd z)\\
&=\gamma_{-1}\,\Ex[h(Z_{-1})\,V_n(Z_{-1})^{\mu+1}].
\end{align*}
Choosing now $h\equiv 1$ we obtain that
$$
\gamma_\mu = \gamma_{-1}\,\Ex V_n(Z_{-1})^{\mu+1},
$$
which completes the argument.
\end{proof}

We introduce the notation
$$
\kappa_n:=\frac{\pi^{n/2}}{\Gamma\left(1+\frac{n}{2}\right)}
$$
for the volume of the $n$-dimensional unit ball and let $\sigma$ be the normalized spherical Lebesgue measure on the $(n-1)$-dimensional unit sphere $\Sp^{n-1}$. Furthermore, following the notation used in \cite{SW08} we define the quantities
$$
S(n,n,s) := \int\limits_{\Sp^{n-1}}\ldots\int\limits_{\Sp^{n-1}}\Delta_n(u_0,\ldots,u_n)^{s}\,\sigma(\dd u_0)\ldots\sigma(\dd u_n),\qquad s\in[0,\infty),
$$
and
$$
a_n:=\frac{n^2}{2^{n+1}\pi^{\frac{n-1}{2}}}\frac{\Gamma\big(\frac{n^2}{2}\big)}{\Gamma\big(\frac{n^2+1}{2}\big)}\left[\frac{\Gamma\left(\frac{n+1}{2}\right)}{\Gamma\left(1+\frac{n}{2}\right)}\right]^n,
$$
where $\Delta_n(u_0,\ldots,u_n):=V_n(\conv(u_0,\ldots,u_n))$. This notation allows us to identify the values of $\Ex V_n(Z_{-1})^{\mu+1}$.

\begin{lemma}\label{lem:GammaMu}
For any $s\in(-1,\infty)$ we have
\begin{align*}
\Ex V_n(Z_{-1})^{s} &= a_n\,S(n,n,s+1)\,{\Gamma(n+s)\over n\,\kappa_n^{n+s}}\,{1\over\gamma^{s}}.
\end{align*}
\end{lemma}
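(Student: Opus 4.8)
The plan is to compute $\Ex V_n(Z_{-1})^s$ by first re-expressing $\gamma_{-1}\,\Ex V_n(Z_{-1})^s = \Ex\sum_{c\in\mathscr{D},\,z(c)\in[0,1]^n} V_n(c)^s$ as a single integral over configurations of $n+1$ points of $\eta$ with a Delaunay (empty circumball) constraint, then applying the multivariate Mecke equation to turn the expectation over $\mathscr{D}$ into a deterministic integral against $\gamma^{n+1}$ times Lebesgue measure, and finally evaluating that integral via the Blaschke–Petkantschin-type change of variables adapted to circumscribed spheres. Concretely, a Delaunay simplex with vertices $x_0,\dots,x_n$ is recorded together with its circumcenter $z$ and circumradius $r$, so one parametrizes $x_i = z + r u_i$ with $u_i\in\Sp^{n-1}$; under this substitution $V_n(\conv(x_0,\dots,x_n)) = r^n\,\Delta_n(u_0,\dots,u_n)$, and the Jacobian of $(x_0,\dots,x_n)\mapsto(z,r,u_0,\dots,u_n)$ produces a factor of the form $n!\,\kappa_n\, r^{n^2-1}\,\Delta_n(u_0,\dots,u_n)$ (this is exactly the sphere-based affine Blaschke–Petkantschin formula; see \cite[Chapter 7]{SW08}). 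The empty-circumball condition $B(x_0,\dots,x_n)\cap\eta = \{x_0,\dots,x_n\}$ contributes, after Mecke, a void probability $\exp(-\gamma\kappa_n r^n)$ for a ball of radius $r$.

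Carrying this out, one is left with
\[
\gamma_{-1}\,\Ex V_n(Z_{-1})^s
= \frac{\gamma^{n+1}}{n!}\cdot n!\,\kappa_n \cdot \Big(\int_{(\Sp^{n-1})^{n+1}} \Delta_n(u_0,\dots,u_n)^{s+1}\,\sigma^{\otimes(n+1)}(\dd u)\Big)\cdot \int_0^\infty r^{n(s+1)+n^2-1}\,e^{-\gamma\kappa_n r^n}\,\dd r,
\]
where the $[0,1]^n$ restriction on $z$ integrates to $1$, the spherical integral is precisely $S(n,n,s+1)$ after accounting for the normalization of $\sigma$ (which converts $\sigma_{n-1}(\Sp^{n-1})^{n+1}$ into a power of $n\kappa_n$), and the radial integral is a Gamma integral: substituting $t = \gamma\kappa_n r^n$ gives $\tfrac{1}{n}(\gamma\kappa_n)^{-(s+n)}\,\Gamma(s+n)$. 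Collecting the powers of $\gamma$ and $\kappa_n$, all the combinatorial prefactors, and the known value $\gamma_{-1} = a_n$ up to the normalization bookkeeping (indeed, taking $s=0$ must recover $\Ex V_n(Z_{-1})^0 = 1$, which pins down $\gamma_{-1}$ in terms of $a_n$, $S(n,n,1)$, $\Gamma(n)$, $\kappa_n^n$, and $\gamma^0$, and matches the stated definition of $a_n$), one arrives at
\[
\Ex V_n(Z_{-1})^s = a_n\,S(n,n,s+1)\,\frac{\Gamma(n+s)}{n\,\kappa_n^{n+s}}\,\frac{1}{\gamma^s}.
\]

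The main obstacle is getting the Blaschke–Petkantschin Jacobian and all normalization constants exactly right: one must track carefully the difference between the normalized measure $\sigma$ and the unnormalized surface measure (each of the $n+1$ sphere integrations carries a factor $\sigma_{n-1}(\Sp^{n-1}) = n\kappa_n$), the $1/n!$ from passing between ordered tuples and unordered simplices in the Mecke formula versus the $n!$ or similar that appears in the change-of-variables Jacobian, and the precise exponent $n^2-1$ in the radial variable. Rather than rederiving the sphere-based Blaschke–Petkantschin formula from scratch, I would cite the version already recorded in \cite{SW08} (it is the same computation underlying the classical formulas for the typical Poisson–Delaunay simplex there), and I would use the sanity check at $s=0$ — together with the known closed form of $a_n$ — to confirm that no constant has been dropped. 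The convergence of the radial integral requires $n(s+1) + n^2 - 1 > -1$, i.e.\ $s > -1$, which is exactly the stated range; for $s > -1$ the $\Gamma(n+s)$ on the right is finite and the identity holds.
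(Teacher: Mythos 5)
Your argument is correct in substance, but it takes a much longer route than the paper does. The paper's proof of this lemma is a one-line citation: the moments are Theorem 10.4.5 of \cite{SW08} for integer $s$, and the proof there extends verbatim to non-integer $s$ (the only place where integrality could matter is the radial Gamma integral and the spherical integral $S(n,n,s+1)$, both of which make sense for real $s>-1$). What you propose is essentially to re-derive the underlying result of \cite{SW08} from scratch: multivariate Mecke, the void probability $e^{-\gamma\kappa_n r^n}$, and the sphere-based Blaschke--Petkantschin change of variables $(x_0,\dots,x_n)\mapsto(z,r,u_0,\dots,u_n)$. Note that the paper already imports exactly this output in the proof of Theorem \ref{thm:Representation}, in the form of the representation of $\Pr_{-1}^0$ from \cite[Theorem 10.4.4]{SW08}; starting from that representation, your computation reduces to two lines (pull out $V_n=r^n\Delta_n$ raised to the $s$-th power, recognize $S(n,n,s+1)$, and evaluate the radial integral by the substitution $t=\gamma\kappa_n r^n$, giving $\tfrac{1}{n}(\gamma\kappa_n)^{-(n+s)}\Gamma(n+s)$). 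Your device of fixing all Jacobian and normalization constants by the $s=0$ check $\Ex V_n(Z_{-1})^0=1$ is legitimate, since the $s$-dependence of the formula is carried entirely by $S(n,n,s+1)$, $\Gamma(n+s)$ and $(\gamma\kappa_n)^{-s}$, and it buys you robustness against the bookkeeping you rightly identify as the main hazard.

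One small inaccuracy in your closing remark: the radial integral $\int_0^\infty r^{n(s+1)+n^2-1}e^{-\gamma\kappa_n r^n}\,\dd r$ converges precisely when $n(s+1)+n^2>0$, i.e.\ for $s>-1-n$, not $s>-1$. The restriction $s\in(-1,\infty)$ in the statement is not imposed by the radial integral but by the spherical one: $S(n,n,s+1)$ is used for exponents $s+1\geq 0$ (this is the range in which it is defined in the paper and in \cite{KTT17}), and for $s+1<0$ the integrability of $\Delta_n^{s+1}$ near degenerate configurations would have to be examined separately. This does not affect the validity of the identity on the stated range.
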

\begin{proof}
The moments $\Ex V_n(Z_{-1})^{s}$ are known from \cite[Theorem 10.4.5]{SW08} for integer values of $s$. However, the proof can be easily extended to non-integer values for $s$. This yields the result.
\end{proof}

In the next step, we develop a probabilistic representation for the distributions $\Pr_\mu^0$.

\begin{theorem}\label{thm:Representation}
For $\mu\in(-2,\infty)$ and $A\in\mathcal{B}(\Simpl_n^0)$ one has that
\begin{align*}
\Pr_\mu^0(A) &= {n\kappa_n^{n+\mu+1}\,\gamma^{n+\mu+1}\over S(n,n,\mu+2)\,\Gamma(n+\mu+1)}\int\limits_0^\infty\int\limits_{(\Sp^{n-1})^{n+1}}{\bf 1}\{\conv(ru_0,\ldots,ru_n)\in A\}\\
&\qquad\times e^{-\gamma\kappa_nr^n}\,r^{n^2+n(\mu+1)-1}\,\Delta_n(u_0,\ldots,u_n)^{\mu+2}\,\sigma^{n+1}(\dd(u_0,\ldots,u_n))\,\dd r.
\end{align*}
\end{theorem}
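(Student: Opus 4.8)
The plan is to reduce everything, via Lemma~\ref{lem:LinkWeightedTypical}, to the typical Poisson--Delaunay simplex $Z_{-1}$, for which a classical integral representation is available, and then to fix the normalizing constant using Lemma~\ref{lem:GammaMu} (equivalently, using the fact that $\Pr_\mu^0$ is a probability measure). The classical ingredient, which I would take from \cite[Chapter~10.4]{SW08}, is the following: writing an $(n+1)$-tuple of points of $\eta$ in terms of its circumcenter $z$, its circumradius $r>0$ and the unit vectors $u_0,\ldots,u_n\in\Sp^{n-1}$ pointing from $z$ towards the vertices, applying the Mecke equation to the defining sum over $(n+1)$-tuples and using the void probability $e^{-\gamma\kappa_nr^n}$ that a ball of radius $r$ contains no further point of $\eta$, a Blaschke--Petkantschin-type change of variables yields a constant $c_n\in(0,\infty)$ such that
\begin{equation}\label{eq:Z-1rep}
\Ex g(Z_{-1}) = c_n\int\limits_0^\infty\int\limits_{(\Sp^{n-1})^{n+1}} g\big(\conv(ru_0,\ldots,ru_n)\big)\,\Delta_n(u_0,\ldots,u_n)\,e^{-\gamma\kappa_nr^n}\,r^{n^2-1}\,\sigma^{n+1}(\dd(u_0,\ldots,u_n))\,\dd r
\end{equation}
for every non-negative measurable $g:\Simpl_n^0\to\R$.

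Next I would apply \eqref{eq:Z-1rep} with $g=h\cdot V_n^{\mu+1}$ and combine it with Lemma~\ref{lem:LinkWeightedTypical}. By the scaling identity $V_n(\conv(ru_0,\ldots,ru_n))=r^n\Delta_n(u_0,\ldots,u_n)$, the weight $V_n^{\mu+1}$ contributes a factor $r^{n(\mu+1)}\Delta_n(u_0,\ldots,u_n)^{\mu+1}$, which merges with the density in \eqref{eq:Z-1rep} to $r^{n^2+n(\mu+1)-1}\Delta_n(u_0,\ldots,u_n)^{\mu+2}$. Crucially, the same computation with $h\equiv1$ expresses $\Ex V_n(Z_{-1})^{\mu+1}$ as exactly this integral without the indicator, so the unknown constant $c_n$ cancels in the quotient of Lemma~\ref{lem:LinkWeightedTypical}, leaving (with $\bu=(u_0,\ldots,u_n)$ and $\Delta_n=\Delta_n(u_0,\ldots,u_n)$)
\begin{equation*}
\Pr_\mu^0(A)=\frac{\int_0^\infty\int_{(\Sp^{n-1})^{n+1}}{\bf 1}\{\conv(ru_0,\ldots,ru_n)\in A\}\,e^{-\gamma\kappa_nr^n}\,r^{n^2+n(\mu+1)-1}\,\Delta_n^{\mu+2}\,\sigma^{n+1}(\dd\bu)\,\dd r}{\int_0^\infty\int_{(\Sp^{n-1})^{n+1}}e^{-\gamma\kappa_nr^n}\,r^{n^2+n(\mu+1)-1}\,\Delta_n^{\mu+2}\,\sigma^{n+1}(\dd\bu)\,\dd r}.
\end{equation*}

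It then remains to evaluate the denominator. The substitution $t=\gamma\kappa_nr^n$ gives $\int_0^\infty e^{-\gamma\kappa_nr^n}r^{n^2+n(\mu+1)-1}\,\dd r=\Gamma(n+\mu+1)/\big(n(\gamma\kappa_n)^{n+\mu+1}\big)$ (the exponent $n^2+n(\mu+1)-1$ is $>-1$ for all $n\ge1$ and $\mu>-2$, so this integral converges and Tonelli's theorem applies throughout), while $\int_{(\Sp^{n-1})^{n+1}}\Delta_n^{\mu+2}\,\sigma^{n+1}(\dd\bu)=S(n,n,\mu+2)$ by definition, which is finite since $\mu+2>0$. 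Inverting and writing $(\gamma\kappa_n)^{n+\mu+1}=\gamma^{n+\mu+1}\kappa_n^{n+\mu+1}$ produces exactly the prefactor $n\kappa_n^{n+\mu+1}\gamma^{n+\mu+1}/\big(S(n,n,\mu+2)\,\Gamma(n+\mu+1)\big)$ of the statement. As a consistency check, substituting the closed form of $\Ex V_n(Z_{-1})^{\mu+1}$ from Lemma~\ref{lem:GammaMu} directly into Lemma~\ref{lem:LinkWeightedTypical} gives the same answer and in addition identifies $c_n=a_n\gamma^n$.

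I expect the only real content to lie in the correct invocation of \eqref{eq:Z-1rep}, that is, the Blaschke--Petkantschin change of variables together with the Mecke formula and the Poisson void probability; once that representation is available, the remaining argument is just the elementary gamma integral above together with Fubini/Tonelli, with no genuine obstacle beyond carefully tracking the powers of $r$, $\kappa_n$ and $\gamma$.
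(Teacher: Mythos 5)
Your proposal is correct and follows essentially the same route as the paper: invoke the classical representation of $\Pr_{-1}^0$ from \cite[Theorem 10.4.4]{SW08}, push it through Lemma~\ref{lem:LinkWeightedTypical} using the scaling $V_n(\conv(ru_0,\ldots,ru_n))=r^n\Delta_n(u_0,\ldots,u_n)$, and identify the normalization. The only cosmetic difference is that you determine the constant by evaluating the denominator integral directly (thereby re-deriving Lemma~\ref{lem:GammaMu}), whereas the paper substitutes the explicit value of $\Ex V_n(Z_{-1})^{\mu+1}$ from that lemma; the two are equivalent.
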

\begin{proof}
According to \cite[Theorem 10.4.4]{SW08} we have that
\begin{align*}
\Pr_{-1}^0(A) & = a_n\gamma^n\int\limits_{0}^{\infty}\int\limits_{(\Sp^{n-1})^{n+1}}{\bf 1}\{\conv(ru_0,\ldots,ru_n)\in A\}\,e^{-\gamma\kappa_nr^n}r^{n^2-1}\\
&\qquad\qquad\qquad\qquad\times\Delta_n(u_0,\ldots,u_n)\,\sigma^{n+1}(\dd(u_0,\ldots,u_n))\,\dd r.
\end{align*}
Using now Lemma \ref{lem:LinkWeightedTypical} with $h(\cdot)={\bf 1}\{\cdot\in A\}$ and Lemma \ref{lem:GammaMu} we conclude that
\begin{align*}
\Pr_{\mu}^0(A) &= {a_n\,\gamma^n\over\Ex V_n(Z_{-1})^{\mu+1}}\int\limits_{0}^{\infty}\int\limits_{(\Sp^{n-1})^{n+1}}{\bf 1}\{\conv(ru_0,\ldots,ru_n)\in A\}\,e^{-\gamma\kappa_nr^n}r^{n^2-1}\\
&\qquad\qquad\times\Delta_n(u_0,\ldots,u_n)\,\Delta_n(ru_0,\ldots,ru_n)^{\mu+1}\,\sigma^{n+1}(\dd(u_0,\ldots,u_n))\,\dd r\\
&={n\,\kappa_n^{n+\mu+1}\,\gamma^{n+\mu+1}\over S(n,n,\mu+2)\,\Gamma(n+\mu+1)}\int\limits_{0}^{\infty}\int\limits_{(\Sp^{n-1})^{n+1}}{\bf 1}\{\conv(ru_0,\ldots,ru_n)\in A\}\,e^{-\gamma\kappa_nr^n}\\
&\qquad\qquad\times r^{n^2+n(\mu+1)-1}\,\Delta_n(u_0,\ldots,u_n)^{\mu+2}\,\sigma^{n+1}(\dd(u_0,\ldots,u_n))\,\dd r.
\end{align*}
This completes the proof of the theorem.
\end{proof}

As a first corollary of Theorem \ref{thm:Representation} we derive a probabilistic representation for the radius of the circumsphere of the random simplex $Z_{\mu}$.
Given a simplex $c\in \Simpl_n$ denote by $R(c)$ the radius of the circumsphere of $c$. The next lemma provides the formula for the cumulative distribution function $F_{\mu}(t):=\Pr(R(Z_\mu)\leq t)$, $t> 0$, of the random variable $R(Z_\mu)$. We remark that for the typical Delaunay simplex $Z_{-1}$ this formula was known before from \cite{ENR17}.

\begin{corollary}\label{lm:RadiiDistribution}
For $\mu\in(-2,\infty)$ and $t>0$ one has that
\[
F_{\mu}(t)={n\kappa_n^{n+\mu+1}\gamma^{n+\mu+1}\over \Gamma(n+\mu+1)}\int\limits_{0}^{t}e^{-\gamma\kappa_nr^n}r^{n^2-1+n(\mu+1)}\,\dd r.
\]
\end{corollary}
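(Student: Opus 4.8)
The plan is to obtain $F_\mu(t)=\Pr(R(Z_\mu)\le t)$ directly from the probabilistic representation in Theorem~\ref{thm:Representation} by choosing the test set $A$ appropriately. First I would note that for a simplex $c=\conv(ru_0,\ldots,ru_n)$ with $u_0,\ldots,u_n\in\Sp^{n-1}$ and $r>0$, all vertices lie on the sphere of radius $r$ centred at the origin; since every simplex in $\Simpl_n^0$ has circumcenter $0$ by definition of $\Simpl_n^0$, the circumradius of $c$ is exactly $r$. Hence $R(c)\le t$ is equivalent to $r\le t$, and I would apply Theorem~\ref{thm:Representation} with $A=\{c\in\Simpl_n^0: R(c)\le t\}$, so that the indicator ${\bf 1}\{\conv(ru_0,\ldots,ru_n)\in A\}$ collapses to ${\bf 1}\{r\le t\}$.

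With that substitution the formula in Theorem~\ref{thm:Representation} becomes
\[
F_\mu(t)={n\kappa_n^{n+\mu+1}\gamma^{n+\mu+1}\over S(n,n,\mu+2)\,\Gamma(n+\mu+1)}\int\limits_0^t e^{-\gamma\kappa_n r^n}\,r^{n^2+n(\mu+1)-1}\Big(\int\limits_{(\Sp^{n-1})^{n+1}}\Delta_n(u_0,\ldots,u_n)^{\mu+2}\,\sigma^{n+1}(\dd(u_0,\ldots,u_n))\Big)\dd r.
\]
The inner integral over $(\Sp^{n-1})^{n+1}$ no longer depends on $r$, and by the very definition of $S(n,n,s)$ given just before Lemma~\ref{lem:GammaMu} (with $s=\mu+2$) it equals $S(n,n,\mu+2)$. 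This factor therefore cancels against the $S(n,n,\mu+2)$ in the denominator of the prefactor, leaving
\[
F_\mu(t)={n\kappa_n^{n+\mu+1}\gamma^{n+\mu+1}\over \Gamma(n+\mu+1)}\int\limits_0^t e^{-\gamma\kappa_n r^n}\,r^{n^2-1+n(\mu+1)}\,\dd r,
\]
which is the claimed identity (after writing the exponent as $n^2-1+n(\mu+1)$).

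There is essentially no serious obstacle here: the statement is a routine specialization of Theorem~\ref{thm:Representation}. The only point requiring a word of care is the identification $R(c)=r$, i.e.\ that for points $ru_0,\ldots,ru_n$ on a common sphere of radius $r$ the \emph{unique} circumsphere of their convex hull is that sphere; this uses that $u_0,\ldots,u_n$ are $\sigma^{n+1}$-almost surely affinely independent, so that $n+1$ points in $\R^n$ in general position determine their circumsphere uniquely, and that this circumcenter is $0$, consistent with membership in $\Simpl_n^0$. One should also remark that the integrand is nonnegative and the inner spherical integral is finite (indeed $S(n,n,\mu+2)<\infty$ for $\mu>-2$), so Fubini's theorem applies and the interchange of the $r$-integral with the spherical integral is justified. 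Everything else is bookkeeping, so I would keep the write-up to a couple of lines.
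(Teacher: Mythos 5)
Your proposal is correct and follows essentially the same route as the paper: specialize Theorem \ref{thm:Representation} to the event $\{R(Z_\mu)\le t\}$, identify $R(\conv(ru_0,\ldots,ru_n))=r$ so the indicator becomes ${\bf 1}\{r\le t\}$, and use Fubini together with the definition of $S(n,n,\mu+2)$ to cancel that factor. Your explicit remarks on the uniqueness of the circumsphere and the applicability of Fubini are points the paper leaves implicit, but the argument is the same.
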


\begin{proof}
Applying Theorem \ref{thm:Representation} we get
\begin{align*}
\Pr(R(Z_\mu)\leq t)&=\Ex\left[{\bf 1}\{R(Z_\mu)\leq t\}\right]\\
&={n\kappa_n^{n+\mu+1}\,\gamma^{n+\mu+1}\over S(n,n,\mu+2)\,\Gamma(n+\mu+1)}\int\limits_0^\infty\int\limits_{(\Sp^{n-1})^{n+1}}{\bf 1}\{R(\conv(ru_0,\ldots,ru_n))\leq t\}\\
&\qquad\times e^{-\gamma\kappa_nr^n}\,r^{n^2+n(\mu+1)-1}\,\Delta_n(u_0,\ldots,u_n)^{\mu+2}\,\sigma^{n+1}(\dd(u_0,\ldots,u_n))\,\dd r.
\end{align*}
Using Fubini's theorem and the definition of $S(d,d,\mu+2)$ we conclude
\begin{align*}
\Pr(R(Z_\mu)\leq t)&={n\kappa_n^{n+\mu+1}\,\gamma^{n+\mu+1}\over S(n,n,\mu+2)\,\Gamma(n+\mu+1)}\int\limits_0^te^{-\gamma\kappa_nr^n}\,r^{n^2+n(\mu+1)-1}\dd r\\
&\qquad\times \int\limits_{(\Sp^{n-1})^{n+1}}\,\Delta_n(u_0,\ldots,u_n)^{\mu+2}\,\sigma^{n+1}(\dd(u_0,\ldots,u_n))\\
&={n\kappa_n^{n+\mu+1}\gamma^{n+\mu+1}\over \Gamma(n+\mu+1)}\int\limits_{0}^{t}e^{-\gamma\kappa_nr^n}r^{n^2-1+n(\mu+1)}\,\dd r
\end{align*}
and complete the proof of the lemma
\end{proof}

\subsection{Moments of the volume of the random simplex $Z_\mu$}

As a second application of Theorem \ref{thm:Representation} we compute the moments of the random variable $V_n(Z_\mu)$.

\begin{theorem}\label{cor:MomentsAbstract}
For $\mu\in(-2,\infty)$ and $s\in(-\mu-2,\infty)$ we have that
\begin{align*}
\Ex V_n(Z_\mu)^s &= {S(n,n,\mu+s+2)\over S(n,n,\mu+2)}{\Gamma(n+\mu+s+1)\over\Gamma(n+\mu+1)}{1\over(\gamma\,\kappa_n)^s}.
\end{align*}
\end{theorem}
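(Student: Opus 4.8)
The plan is to apply Theorem~\ref{thm:Representation} with the measurable function $h(c) = V_n(c)^s$ and then carry out the resulting integral, separating the radial variable $r$ from the angular variables $(u_0,\ldots,u_n)\in(\Sp^{n-1})^{n+1}$. First I would note that for $c = \conv(ru_0,\ldots,ru_n)$ the homogeneity of volume gives $V_n(c) = r^n\,\Delta_n(u_0,\ldots,u_n)$, so that $V_n(c)^s = r^{ns}\,\Delta_n(u_0,\ldots,u_n)^s$. Plugging this into the formula of Theorem~\ref{thm:Representation} (with $A$ replaced by integration against $h$), the integrand factorizes into a radial part $e^{-\gamma\kappa_n r^n}\,r^{n^2+n(\mu+1)-1+ns}$ and an angular part $\Delta_n(u_0,\ldots,u_n)^{\mu+2+s}$, so Fubini's theorem applies and the double integral splits as a product.

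Next I would evaluate the two pieces. The angular integral is, by definition, exactly $S(n,n,\mu+s+2)$; this is finite precisely when $\mu+s+2 > 0$, i.e.\ $s > -\mu-2$, which is the stated hypothesis. For the radial integral $\int_0^\infty e^{-\gamma\kappa_n r^n}\,r^{n^2+n(\mu+1)+ns-1}\,\dd r$, I would substitute $t = \gamma\kappa_n r^n$, so that $r = (t/(\gamma\kappa_n))^{1/n}$ and $\dd r = \tfrac{1}{n}(\gamma\kappa_n)^{-1/n}t^{1/n-1}\,\dd t$; collecting powers of $t$ this becomes $\tfrac{1}{n}(\gamma\kappa_n)^{-(n+\mu+1+s)}\int_0^\infty e^{-t}t^{n+\mu+s}\,\dd t = \tfrac{1}{n}(\gamma\kappa_n)^{-(n+\mu+1+s)}\,\Gamma(n+\mu+s+1)$, where the Gamma integral converges because $n+\mu+s+1 > 0$ (again guaranteed by $s > -\mu-2$ together with $n\ge 1$).

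Finally I would multiply the prefactor $\dfrac{n\,\kappa_n^{n+\mu+1}\,\gamma^{n+\mu+1}}{S(n,n,\mu+2)\,\Gamma(n+\mu+1)}$ from Theorem~\ref{thm:Representation} by the product of the two evaluated integrals. The factor $n$ cancels, and the powers of $\kappa_n$ and $\gamma$ combine to leave $(\gamma\kappa_n)^{-s}$, yielding
\[
\Ex V_n(Z_\mu)^s = {S(n,n,\mu+s+2)\over S(n,n,\mu+2)}\,{\Gamma(n+\mu+s+1)\over\Gamma(n+\mu+1)}\,{1\over(\gamma\kappa_n)^s},
\]
as claimed. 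There is no serious obstacle here: the argument is a direct computation once Theorem~\ref{thm:Representation} is in hand, and the only point requiring a word of care is checking that the hypothesis $s\in(-\mu-2,\infty)$ is exactly what is needed for the convergence of both the angular integral (through $S(n,n,\mu+s+2)$) and the radial Gamma integral, and for the monotone-convergence/Fubini step to be justified for the nonnegative integrand.
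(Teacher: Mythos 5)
Your proposal is correct and follows essentially the same route as the paper: apply Theorem~\ref{thm:Representation} with $h=V_n(\cdot)^s$, use the homogeneity $\Delta_n(ru_0,\ldots,ru_n)=r^n\Delta_n(u_0,\ldots,u_n)$ to separate the radial and angular integrals, identify the angular part as $S(n,n,\mu+s+2)$, and evaluate the radial part as a Gamma integral. Your explicit remark that the hypothesis $s>-\mu-2$ is exactly what makes both integrals converge is a welcome (if minor) addition to the paper's computation.
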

\begin{proof}
Using Theorem \ref{thm:Representation} we get
\begin{align*}
\Ex V_n(Z_\mu)^s &= {n\kappa_n^{n+\mu+1}\,\gamma^{n+\mu+1}\over S(n,n,\mu+2)\,\Gamma(n+\mu+1)}\int\limits_0^\infty\int\limits_{(\Sp^{n-1})^{n+1}}\Delta_n(ru_0,\ldots,ru_n)^s\\
&\qquad\times e^{-\gamma\kappa_nr^n}\,r^{n^2+n(\mu+1)-1}\,\Delta_n(u_0,\ldots,u_n)^{\mu+2}\,\sigma^{n+1}(\dd(u_0,\ldots,u_n))\,\dd r\\
&={n\kappa_n^{n+\mu+1}\,\gamma^{n+\mu+1}\over S(n,n,\mu+2)\,\Gamma(n+\mu+1)}\int\limits_0^\infty\int\limits_{(\Sp^{n-1})^{n+1}}e^{-\gamma\kappa_nr^n}\,r^{n^2+n(\mu+s+1)-1}\\
&\qquad\qquad\qquad\times\Delta_n(u_0,\ldots,u_n)^{\mu+s+2}\,\sigma^{n+1}(\dd(u_0,\ldots,u_n))\,\dd r\\
&={n\kappa_n^{n+\mu+1}\,\gamma^{n+\mu+1}\over S(n,n,\mu+2)\,\Gamma(n+\mu+1)}\,S(n,n,\mu+s+2)\,\int\limits_0^\infty e^{-\gamma\kappa_nr^n}\,r^{n^2+n(\mu+s+1)-1}\,\dd r\\
&={n\kappa_n^{n+\mu+1}\,\gamma^{n+\mu+1}\over S(n,n,\mu+2)\,\Gamma(n+\mu+1)}\,S(n,n,\mu+s+2)\,{\Gamma(n+\mu+s+1)\over n\,(\gamma\kappa_n)^{n+\mu+s+1}}\\
&={S(n,n,\mu+s+2)\over S(n,n,\mu+2)}{\Gamma(n+\mu+s+1)\over\Gamma(n+\mu+1)}{1\over(\gamma\,\kappa_n)^s}\,.
\end{align*}
This completes the proof.
\end{proof}

The precise values for $S(n,n,s)$, $s\in[0,\infty)$, can be found for integer values of $s$ in \cite{rM71} and for general $s$ in \cite{KTT17}.
In particular, from an analysis of the expected volume of so-called random beta-polytopes (see \cite[Proposition 2.8]{KTT17}) we obtain, by formally putting $\beta=-1$ there, 
\begin{align*}
S(n,n,s) = \frac{2^{n+1}\pi^{\frac{n(n+1)}{2}}\Gamma\left(\frac{n^2+n(s-1)+s}{2}\right)}{(n!)^s\Gamma\left(\frac{n^2+n(s-1)}{2}\right)\Gamma\left(\frac{n+s}{2}\right)^{n+1}}\prod\limits_{i=1}^{n}\frac{\Gamma\left(\frac{s+i}{2}\right)}{\Gamma\left(\frac{i}{2}\right)}
\end{align*}
and hence from Theorem \ref{cor:MomentsAbstract} we have
\begin{equation}\label{eq:Moments}
\begin{aligned}
\Ex V_n(Z_\mu)^s =C_{\mu}\times\left[\frac{\Gamma\left(\frac{n}{2}+1\right)}{\gamma\pi^{n/2}n!}\right]^s&\times{\Gamma\left({(n+1)(n+\mu)\over 2}+1+{n+1\over 2}s\right)\over\Gamma\big({n(n+\mu+1)\over 2}+{n\over 2}s\big)}{\Gamma(n+\mu+1+s)\over\Gamma\big({n+\mu\over 2}+1+{s\over 2}\big)^{n+1}}\\
&\times\prod\limits_{i=1}^n\Gamma\left({i+\mu\over 2}+1+{s\over 2}\right),
\end{aligned}
\end{equation}
where the constant $C_\mu$ is given by
\[
C_{\mu}:={\Gamma\big({n^2+n(\mu+1)\over 2}\big)\Gamma\big({n+\mu\over 2}+1\big)^{n+1}\over \Gamma(n+\mu+1)\Gamma\big({n^2+n(\mu+1)+\mu\over 2}+1\big) \prod\limits_{i=1}^n\Gamma\big({i+\mu\over 2}+1\big)}.
\]

\subsection{Probabilistic representations}

Knowing the moments of the volume of random simplex $Z_{\mu}$ we are able to say more about its actual distribution, namely we can provide a neat probabilistic representation for the random variable $V_n(Z_{\mu})^2$, which is similar in spirit to the ones for Gaussian or beta random simplices \cite{GKT17,rM71}. Before we formulate our result let us recall some standard distributions.

A random variable has a Gamma distribution with shape $\alpha\in(0,\infty)$ and rate $\lambda \in(0,\infty)$ if its density function is given by
\[
g_{\alpha,\lambda}(t)={\lambda^{\alpha}\over\Gamma(\alpha)}t^{\alpha-1}e^{-\lambda t},\quad t\in(0,\infty).
\]
A random variable has a Beta distribution with shape parameters $\alpha, \beta\in(0,\infty)$ if its density function is given by
\[
g_{\alpha,\beta}(t)={\Gamma(\alpha+\beta)\over\Gamma(\alpha)\Gamma(\beta)}t^{\alpha-1}(1-t)^{\beta - 1},\quad t\in(0,1).
\]
We will use the notation $\xi\sim \Gam(\alpha,\lambda)$ and $\xi\sim \Bet(\alpha,\beta)$ to indicate that random variable $\xi$ has Gamma distribution with shape $\alpha$ and rate $\lambda$ or Beta distribution with shape parameters $\alpha, \beta$, respectively. Moreover, $\xi\overset{D}{=}\xi'$ will indicate that two random variables $\xi$ and $\xi'$ have the same distribution.

\begin{theorem}\label{thm:ProbabilisticRepresentation}
For any $\mu\in(-2,\infty)$ we have that
\[
\xi^n(1-\xi)\left(n!V_n(Z_{\mu})\right)^2\eqdistr \left({\rho\over\gamma\kappa_n}\right)^2\prod\limits_{i=1}^n\xi_i,
\]
where $\xi\sim\Bet\left({n^2+n+n\mu\over 2}, {\mu+2\over 2}\right)$, $\xi_i\sim\Bet\left({i+\mu+1\over 2}, {n-i+1\over 2}\right)$, $i\in\{1,\ldots,n\}$, and $\rho\sim\Gam(n+\mu+1,1)$ are independent random variables, independent of $V_n(Z_{\mu})$.
\end{theorem}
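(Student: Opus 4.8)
The plan is to verify the claimed distributional identity by computing all positive real moments of both sides and invoking a moment-determinacy argument. The left-hand side is a product of independent random variables (with $V_n(Z_\mu)$ independent of $\xi$ and the $\xi_i$), so its $s$-th moment factors as
\[
\Ex\big[\xi^{ns}(1-\xi)^s\big]\cdot (n!)^{2s}\,\Ex V_n(Z_\mu)^{2s},
\]
and similarly the right-hand side has $s$-th moment $(\gamma\kappa_n)^{-2s}\,\Ex\rho^{2s}\prod_{i=1}^n\Ex\xi_i^s$. Each of these expectations is an explicit ratio of Gamma functions: for a $\Bet(\alpha,\beta)$ variable the $s$-th moment is $\Gamma(\alpha+s)\Gamma(\alpha+\beta)/(\Gamma(\alpha)\Gamma(\alpha+\beta+s))$, for a $\Gam(\alpha,1)$ variable it is $\Gamma(\alpha+s)/\Gamma(\alpha)$, and $\Ex V_n(Z_\mu)^{2s}$ is supplied by \eqref{eq:Moments} (equivalently Theorem \ref{cor:MomentsAbstract}) with $s$ replaced by $2s$. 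The moment of $\xi^{ns}(1-\xi)^s$ where $\xi\sim\Bet\big({n^2+n+n\mu\over2},{\mu+2\over2}\big)$ is a single Beta integral, equal to $\frac{\Gamma\big(\frac{n^2+n+n\mu}{2}\big)\Gamma\big(\frac{n+2}{2}+s\big)\,\Gamma\big(\frac{n^2+n+n\mu}{2}+ns\big)}{\Gamma\big(\frac{n^2+n+n\mu}{2}+\frac{\mu+2}{2}\big)\,\Gamma\big(\frac{\mu+2}{2}+s\big)\cdot\text{(denominator)}}$ — I will write this out carefully.

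The core of the argument is then purely algebraic: substituting all these Gamma-function expressions, I must check that the $s$-dependent factors on the two sides coincide for all $s>0$. The powers of $\gamma$ match trivially ($\gamma^{-2s}$ on each side). The powers of $\kappa_n=\pi^{n/2}/\Gamma(1+n/2)$ and of $n!$ match against the bracketed term $\big[\Gamma(\tfrac n2+1)/(\gamma\pi^{n/2}n!)\big]^{2s}$ in \eqref{eq:Moments}. The remaining Gamma factors should pair off as follows: the factor $\Gamma\big(\frac{(n+1)(n+\mu)}{2}+1+(n+1)s\big)$ in \eqref{eq:Moments} combines with the $\Gamma\big(\frac{n^2+n+n\mu}{2}+ns\big)$ coming from $\Ex\xi^{ns}$ and with the $\Gamma\big(\frac{n(n+\mu+1)}{2}+ns\big)=\Gamma\big(\frac{n^2+n+n\mu}{2}+ns\big)$ in the denominator of \eqref{eq:Moments}; the term $\Gamma(n+\mu+1+2s)$ splits as $\Gamma\big(\frac{n+\mu+1}{2}+s\big)\Gamma\big(\frac{n+\mu+2}{2}+s\big)$ up to the duplication-formula constant $2^{\,\cdot}\sqrt\pi$, absorbing the $\Gamma\big(\frac{n+2}{2}+s\big)$ and the $\Gamma(n+\mu+1+s)=\Ex\rho^{2s}\Gamma(n+\mu+1)$-contribution appropriately; and the product $\prod_{i=1}^n\Gamma\big(\frac{i+\mu}{2}+1+s\big)$ in \eqref{eq:Moments} is exactly $\prod_{i=1}^n\Gamma\big(\frac{i+\mu+1}{2}+s\big)\cdot$(shift), matching $\prod\Ex\xi_i^s$ against the $\Gamma\big(\frac{n+\mu}{2}+1+s\big)^{n+1}$ denominator. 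All the constants (i.e. the $s$-independent prefactors, including $C_\mu$) cancel automatically, since setting $s=0$ forces both moment sequences to equal $1$.

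Finally, I must justify that equality of all real moments $s\in(0,\infty)$ implies equality in distribution. Both sides are supported on $[0,\infty)$ and, being built from Gamma and Beta variables whose moment generating functions or tail bounds are readily controlled, they satisfy the Carleman-type growth condition ($\sum m_s^{-1/(2s)}=\infty$); alternatively, the right-hand side is bounded by a constant times $\rho^2$ times a product of $[0,1]$-variables, hence sub-exponential, so its law is moment-determined, and then the matching moments pin down the left-hand side as well. I expect the main obstacle to be purely bookkeeping: correctly tracking the Legendre duplication formula $\Gamma(2z)=\frac{2^{2z-1}}{\sqrt\pi}\Gamma(z)\Gamma(z+\tfrac12)$ each time a Gamma with argument linear in $2s$ must be split into two with arguments linear in $s$, and making sure the resulting powers of $2$ land on the $s$-independent side rather than spoiling the $s$-dependence. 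There is no conceptual difficulty beyond this.
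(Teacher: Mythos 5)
Your proposal is correct and follows exactly the paper's own proof: compute the $s$-th moments, $s\in(0,\infty)$, of both sides from \eqref{eq:Moments} and the standard Beta/Gamma moment formulas, check that they coincide, and conclude equality in distribution (your explicit moment-determinacy step is a point the paper leaves implicit). One caveat on your sketched bookkeeping: no duplication formula is needed, because $\Ex\rho^{2s}=\Gamma(n+\mu+1+2s)/\Gamma(n+\mu+1)$, so the factor $\Gamma(n+\mu+1+2s)$ cancels directly between the two sides; the remaining match comes from reindexing $\prod_{i=1}^n\Gamma\big(\tfrac{i+\mu}{2}+1+s\big)=\prod_{j=3}^{n+2}\Gamma\big(\tfrac{j+\mu}{2}+s\big)$, with the extra factor $\Gamma\big(\tfrac{\mu}{2}+1+s\big)$ from $\Ex[(1-\xi)^s]$ filling the $j=2$ slot and the $j=n+2$ factor cancelling one copy of $\Gamma\big(\tfrac{n+\mu}{2}+1+s\big)^{n+1}$ (also note your $\Gamma\big(\tfrac{n+2}{2}+s\big)$ should read $\Gamma\big(\tfrac{\mu+2}{2}+s\big)$).
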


\begin{proof}
First of all let us recall, that for $\xi_i\sim\Bet\left({i+\mu+1\over 2}, {n-i+1\over 2}\right)$ and $s\in(0,\infty)$ we have
\[
\Ex[\xi_i^{s}]={\Gamma\left({n+\mu\over 2}+1\right)\Gamma\left({i+\mu+1\over 2}+s\right)\over \Gamma\left({i+\mu+1\over 2}\right)\Gamma\left({n+\mu\over 2}+1+s\right)},
\]
and for $\rho\sim\Gam(n+\mu+1,1)$ and $s\in(0,\infty)$ we have
\[
\Ex[\rho^{2s}]={\Gamma(s+n+\mu+1)\over \Gamma\left(n+\mu+1\right)}.
\]
Moreover, for $s\in(0,\infty)$ we compute that
\begin{align*}
\Ex\left[\xi^{ns}(1-\xi)^s\right]&={\Gamma\left({(n+1)(n+\mu)\over 2}+1\right)\over\Gamma\left({n^2+n+n\mu\over 2}\right)\Gamma\left({\mu+2\over 2}\right)}\int\limits_{0}^1t^{{n^2+n+n\mu+ns\over 2}-1}(1-t)^{{\mu\over 2} +s}dt\\
&={\Gamma\left({(n+1)(n+\mu)\over 2}+1\right)\Gamma\left({n(n+1+\mu)\over 2}+ns\right)\Gamma\left({\mu\over 2}+1+s\right)\over\Gamma\left({n^2+n+n\mu\over 2}\right)\Gamma\left({\mu+2\over 2}\right)\Gamma\left({(n+1)(n+\mu)\over 2}+1+(n+1)s\right)}.
\end{align*}
Combining this with \eqref{eq:Moments} we conclude that, for all $s\in(0,\infty)$,
\[
(\gamma\kappa_n n!)^{2s}\,\Ex\left[\xi^{sn}(1-\xi)^sV_n(Z_{\mu})^{2s}\right]=\Ex\left[\rho^{2s}\prod\limits_{i=1}^n\xi_i^s\right],
\]
which finishes the proof.
\end{proof}

Using the formula for the cumulative distribution function of the radius $R(Z_{\mu})$ of the circumsphere of $Z_{\mu}$ provided in Corollary \ref{lm:RadiiDistribution} we also obtain the following probabilistic equality.

\begin{proposition}\label{lm:ProbabilisticReprRadii}
For any $\mu\in(-2,\infty)$ we have
\[
R(Z_{\mu})^n\eqdistr {\rho\over \gamma\kappa_n},
\]
where $\rho\sim\Gam(n+\mu+1,1)$.
\end{proposition}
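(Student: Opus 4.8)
The plan is to verify the claimed distributional identity $R(Z_\mu)^n \eqdistr \rho/(\gamma\kappa_n)$ directly from the explicit cumulative distribution function of $R(Z_\mu)$ recorded in Corollary \ref{lm:RadiiDistribution}, by computing the distribution function of the right-hand side and matching the two expressions. Concretely, I would set $W := \rho/(\gamma\kappa_n)$ with $\rho\sim\Gam(n+\mu+1,1)$, so that for $w>0$,
\[
\Pr(W^n\le w) = \Pr(\rho\le \gamma\kappa_n\, w^{1/n}) = {1\over\Gamma(n+\mu+1)}\int_0^{\gamma\kappa_n w^{1/n}} u^{n+\mu}e^{-u}\,\dd u,
\]
using the density $g_{n+\mu+1,1}$ from the recalled definition of the Gamma distribution.

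The next step is the substitution $u = \gamma\kappa_n r^n$, i.e.\ $r = (u/(\gamma\kappa_n))^{1/n}$ and $\dd u = n\gamma\kappa_n r^{n-1}\,\dd r$, which carries the upper limit $u = \gamma\kappa_n w^{1/n}$ to $r = w^{1/n}$. Writing $t := w^{1/n}$, this turns the integral into
\[
\Pr(W^n\le t^n) = {1\over\Gamma(n+\mu+1)}\int_0^{t}(\gamma\kappa_n r^n)^{n+\mu}e^{-\gamma\kappa_n r^n}\,n\gamma\kappa_n r^{n-1}\,\dd r = {n(\gamma\kappa_n)^{n+\mu+1}\over\Gamma(n+\mu+1)}\int_0^{t}e^{-\gamma\kappa_n r^n}r^{n^2+n\mu+n-1}\,\dd r.
\]
Since $n^2 + n\mu + n - 1 = n^2 - 1 + n(\mu+1)$ and $(\gamma\kappa_n)^{n+\mu+1} = \gamma^{n+\mu+1}\kappa_n^{n+\mu+1}$, the right-hand side is exactly $F_\mu(t) = \Pr(R(Z_\mu)\le t)$ from Corollary \ref{lm:RadiiDistribution}. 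As $\Pr(W^n\le t^n) = \Pr(W\le t)$ for $t>0$, this shows $R(Z_\mu)$ and $W$ have the same distribution function on $(0,\infty)$, hence the same law, which is the assertion. (One could alternatively compare moments: $\Ex R(Z_\mu)^{ns}$ computed from $F_\mu$ against $\Ex(\rho/(\gamma\kappa_n))^s = \Gamma(n+\mu+1+s)/(\Gamma(n+\mu+1)(\gamma\kappa_n)^s)$, but the CDF route is cleaner since $F_\mu$ is already in closed form.)

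There is no genuine obstacle here; the only point requiring a little care is bookkeeping the exponents after the change of variables and confirming that the normalizing constant $n(\gamma\kappa_n)^{n+\mu+1}/\Gamma(n+\mu+1)$ produced by the substitution coincides with the prefactor in Corollary \ref{lm:RadiiDistribution} — equivalently, that both $F_\mu$ and the CDF of $W^n$ integrate to $1$, which is just the statement that $\int_0^\infty e^{-\gamma\kappa_n r^n}r^{n^2-1+n(\mu+1)}\,\dd r = \Gamma(n+\mu+1)/(n(\gamma\kappa_n)^{n+\mu+1})$, another instance of the same Gamma integral.
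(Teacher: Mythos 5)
Your overall strategy is exactly the paper's: compute the distribution function of $\rho/(\gamma\kappa_n)$ from the Gamma density, perform the substitution $u=\gamma\kappa_n r^n$, and recognize the result as $F_\mu$ from Corollary \ref{lm:RadiiDistribution}. However, the execution fails at precisely the bookkeeping step you flag as needing care. With $W=\rho/(\gamma\kappa_n)$ and upper limit $u=\gamma\kappa_n w^{1/n}$, the substitution $u=\gamma\kappa_n r^n$ gives $r^n=w^{1/n}$, i.e.\ $r=w^{1/n^2}$, \emph{not} $r=w^{1/n}$. Because of this slip, the chain of identities you write down, taken at face value, yields $\Pr(W\le t)=F_\mu(t)$, i.e.\ $R(Z_\mu)\eqdistr\rho/(\gamma\kappa_n)$ --- which is not the assertion of the proposition (that concerns $R(Z_\mu)^n$) and is in fact false for $n\ge 2$, since the correct computation gives $R(Z_\mu)\eqdistr(\rho/(\gamma\kappa_n))^{1/n}$.

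The fix is immediate and leaves your plan intact: do not raise $W$ to the $n$-th power at the outset. Compute $\Pr(W\le w)=\Pr(\rho\le\gamma\kappa_n w)=\Gamma(n+\mu+1)^{-1}\int_0^{\gamma\kappa_n w}u^{n+\mu}e^{-u}\,\dd u$; the substitution $u=\gamma\kappa_n r^n$ then sends the upper limit to $r=w^{1/n}$ and produces exactly $F_\mu(w^{1/n})=\Pr(R(Z_\mu)\le w^{1/n})=\Pr(R(Z_\mu)^n\le w)$, which is the claimed identity. (Equivalently, carry the exponent $1/n^2$ through your version correctly: one obtains $\Pr(W^n\le w)=F_\mu(w^{1/n^2})=\Pr(R(Z_\mu)^{n^2}\le w)$ and concludes by taking $n$-th roots of nonnegative random variables.) The corrected argument coincides with the paper's proof, which verifies $\Pr(\gamma\kappa_n R(Z_\mu)^n\le t)=\Pr(\rho\le t)$ by the same change of variables.
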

\begin{proof}
Using Corollary \ref{lm:RadiiDistribution} we obtain
\begin{align*}
\Pr(\gamma\kappa_nR(Z_\mu)^n\leq t)&=\Pr\left(R(Z_\mu)\leq {t^{1/n}\over \gamma\kappa_n}\right)\\
&={n\kappa_n^{n+\mu+1}\gamma^{n+\mu+1}\over \Gamma(n+\mu+1)}\int\limits_{0}^{{t^{1/n}/ \gamma\kappa_n}}e^{-\gamma\kappa_nr^n}r^{n^2-1+n(\mu+1)}\,\dd r\\
&={1\over \Gamma(n+\mu+1)}\int\limits_{0}^{t}e^{-y}y^{n+\mu}\,\dd y\\
&=\Pr(\rho\leq t).
\end{align*}
This completes the proof.
\end{proof}

Let us point out, that the random variable ${\rho\over \gamma\kappa_n}$ also appears in the right side of the equality in Theorem \ref{thm:ProbabilisticRepresentation}. This is an evidence for the fact that the distribution of the vertices of the random simplex $Z_{\mu}$ is a randomly rescaled distribution on the unit sphere. The next result specifies this distribution for integer values of $\mu$.

\begin{proposition}
For any integer $\mu\in\{-1,0,1,2,\ldots\}$ we have
\[
\xi^{n}\,V_n(Z_{\mu})^2\eqdistr \left({\rho\over \gamma\kappa_n}\right)^{2}\V_n\left(\conv(X_0,\ldots, X_n)\right)^2,
\]
where $X_0,\ldots, X_n$ are independent and distributed uniformly on the unit $(n+\mu+2)$-dimensional sphere, $\xi\sim\Bet\left({n^2+n+n\mu\over 2}, {\mu+2\over 2}\right)$ is independent of $V_n(Z_{\mu})$ and $\rho\sim\Gam(n+\mu+1,1)$ is independent of $X_0,\ldots, X_n$.
\end{proposition}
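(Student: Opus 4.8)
The strategy is to verify the claimed distributional identity by comparing the moments $\Ex[\xi^{ns}V_n(Z_\mu)^{2s}]$ on the left with $\Ex[\rho^{2s}]\cdot\Ex[\V_n(\conv(X_0,\ldots,X_n))^{2s}]$ on the right for all $s\in(0,\infty)$, relying on the fact that $V_n(Z_\mu)^2$, $\xi$, $\rho$ and the simplex volume are determined by their positive real moments (their supports being contained in bounded or one-sided intervals with sub-Gaussian-type tails — this follows from the explicit Gamma/Beta structure, exactly as in the proof of Theorem \ref{thm:ProbabilisticRepresentation}). The key ingredients are: (a) formula \eqref{eq:Moments} for $\Ex V_n(Z_\mu)^s$ together with the $\Bet$- and $\Gam$-moment formulas recalled in the proof of Theorem \ref{thm:ProbabilisticRepresentation}; and (b) the known formula for the moments of the volume of a simplex spanned by independent uniform points on a high-dimensional sphere, which is precisely the $\beta$-simplex formula (taking the beta parameter to correspond to the uniform distribution on $\Sp^{n+\mu+2}$, i.e.\ formally $\beta=-1$ in the $(n+\mu+2)$-dimensional version of \cite[Proposition 2.8]{KTT17}, the same reference already invoked for $S(n,n,s)$).

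First I would write down, using that reference, the closed form for
$$
\Ex\,\V_n(\conv(X_0,\ldots,X_n))^{2s},
$$
where $X_0,\ldots,X_n$ are i.i.d.\ uniform on $\Sp^{n+\mu+2}$; this is $(2^{n+1}\pi^{\cdots}\Gamma(\cdots)/\cdots)$ divided by its value at $s=0$, i.e.\ a ratio of Gamma factors. The effect of raising the ambient dimension from $n$ (as in $S(n,n,\cdot)$) to $n+\mu+2$ is to shift the arguments of the relevant Gamma functions by $\mu/2$ in exactly the places where the factor $\xi^n$ must compensate. Next I would multiply by $\Ex[\rho^{2s}]=\Gamma(s+n+\mu+1)/\Gamma(n+\mu+1)$. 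Then, on the left-hand side, I would take \eqref{eq:Moments} with the substitution $s\mapsto 2s$, and compute $\Ex[\xi^{ns}]$ from the $\Bet\!\left(\tfrac{n^2+n+n\mu}{2},\tfrac{\mu+2}{2}\right)$ density (a single ratio of Gammas, as in the previous theorem but without the $(1-\xi)^s$ factor). Finally I would check term-by-term that the two resulting products of Gamma factors coincide for every $s>0$; the constants $C_\mu$ and the powers of $\gamma\kappa_n n!$ should cancel against the normalizations at $s=0$ and the factors $(\rho/\gamma\kappa_n)^{2s}$.

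The main obstacle is purely bookkeeping: matching the Gamma-function factors on both sides. The subtlety is that this identity, unlike Theorem \ref{thm:ProbabilisticRepresentation}, does \emph{not} carry the $(1-\xi)$ factor and does \emph{not} decompose the simplex volume into a product $\prod_{i=1}^n\xi_i$ of one-dimensional Beta variables; instead the whole geometric content is packaged into one genuine random simplex on a sphere of dimension $n+\mu+2$. Reconciling this requires using the telescoping/duplication identities for $\prod_{i=1}^n\Gamma\!\left(\tfrac{i+\mu+1}{2}+s\right)$ type products that implicitly underlie the $\beta$-simplex volume formula, and checking that the ambient dimension $n+\mu+2$ is exactly the value that makes the two match — which is why the statement is restricted to integer $\mu\ge -1$, so that $n+\mu+2$ is a genuine (integer) dimension. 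I would also remark at the end that this is consistent with Proposition \ref{lm:ProbabilisticReprRadii}, since the radial factor $\rho/\gamma\kappa_n$ here is the $n$th power of the circumradius $R(Z_\mu)$ up to the $\xi^{n/2}$ correction, confirming the interpretation of $Z_\mu$ as a randomly rescaled spherical simplex.
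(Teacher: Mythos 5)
Your proposal follows essentially the same route as the paper: both verify the identity by matching the moments $\Ex[\xi^{ns}V_n(Z_\mu)^{2s}]$ against $\Ex[\rho^{2s}]\,\Ex[\V_n(\conv(X_0,\ldots,X_n))^{2s}]$ for all $s\in(0,\infty)$, using \eqref{eq:Moments} together with the Beta/Gamma moment formulas and a closed form for the moments of the volume of a simplex with vertices uniform on $\Sp^{n+\mu+2}$ (the paper takes the latter directly from \cite[Theorem 2.3]{GKT17} rather than re-deriving it from \cite{KTT17}). The only step your sketch leaves unexecuted is the term-by-term cancellation of Gamma factors, which is precisely the routine computation the paper's proof also passes over.
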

\begin{proof}
From \cite[Theorem 2.3]{GKT17} we have
\begin{align*}
\Ex\left[\V_n\left(\conv(X_0,\ldots, X_n)\right)^{2s}\right]&={1\over n!^{2s}}\prod\limits_{i=1}^n{\Gamma\left({\mu+i\over 2}+1+s\right)\over\Gamma\left({\mu+i\over 2}+1\right)}{\Gamma\left({n+\mu\over 2}+1\right)^{n+1}\over \Gamma\left({n+\mu\over 2}+1+s\right)^{n+1}}\\
&\qquad\qquad\times{\Gamma\left({(n+1)(n+\mu)\over 2}+1+(n+1)s\right)\over \Gamma\left({(n+1)(n+\mu)\over 2}+1+ns\right)},
\end{align*}
for any $s\in(0,\infty)$. Using the equality
\[
\Ex[\xi^{ns}]={\Gamma\left({(n+1)(n+\mu)\over 2}+1\right)\Gamma\left({n^2+n+n\mu\over 2}+ns\right)\over \Gamma\left({n^2+n+n\mu\over 2}\right)\Gamma\left({(n+1)(n+\mu)\over 2}+1+ns\right)}
\]
and combining this with \eqref{eq:Moments} we conclude, for all $s\in(0,\infty)$, 
\[
\Ex\left[\xi^{ns}V_n(Z_{\mu})^{2s}\right]=\Ex\left[\left({\rho\over \gamma\kappa_n}\right)^{2s}\V_n\left(\conv(X_0,\ldots, X_n)\right)^{2s}\right],
\]
which finishes the proof.
\end{proof}

\begin{remark}\rm 
In \cite[Theorem 2.7]{GKT17} it was shown, that the random variable $\xi$ in the previous proposition is equal by distribution to the squared distance from the origin to the $n$-dimensional affine subspace spanned by the random vectors $X_0,\ldots, X_n$. This is equivalent to say, that 
\[
1-\xi\eqdistr R\left(\conv(X_0,\ldots, X_n)\right)^2.
\]
\end{remark}

\section{Asymptotics for polygamma functions}\label{sec:polygamma}

Later in this paper we will need to analyse sums of polygamma functions as the number of terms tends to infinity. In this section we derive some useful identities allowing to identify their asymptotic behaviour.

We will use the following notation.
Given two functions $f(x)$ and $g(x)$ we write $f=O(g)$ if $\limsup\limits_{x\to\infty}|f(x)/g(x)|<\infty$. Moreover, we write $f=o(g)$ if $\lim\limits_{x\to\infty}|f(x)/g(x)|=0$.

Before we start let us recall some well-known asymptotic results for gamma and polygamma functions. The first of them is Stirling's formula \cite{NIST}:
\begin{align}
\log\Gamma(x)&=x\log x-x-\frac12\log x+O(1),\qquad \text{as}\  x\rightarrow\infty,\label{eq:AsympGamma}\\
\log(n!)&=n\log n-n+\frac12 \log n+O(1),\qquad\text{as}\ n\to\infty,\, n\in\mathbb{N}.\label{eq:Stirling}
\end{align}
Consider the digamma function $\psi(x)=\psi^{(0)}(x):=\frac{\dd}{\dd x}\log\Gamma(x)$ and, more generally, for $m\in\mathbb{N}$ the polygamma function 
\[
\psi^{(m)}(x):=\frac{\dd^m}{\dd x^m}\psi(z)=\frac{\dd^{m+1}}{\dd x^{m+1}}\log\Gamma(x).
\]
In \cite[Theorem C]{QV04} it was shown that
\begin{equation}\label{eq:AsympDigamma}
\psi(x)=\log x-{1\over 2x}+O(1/x^{2})
\end{equation}
and in \cite{Mor10} the asymptotics
\begin{equation}\label{eq:AsympPolygamma}
\psi^{(1)}(x)={1\over x}+{1\over 2x^2}+O(1/x^{3})
\end{equation}
was obtained, as $x\rightarrow\infty$. Moreover, for any $x\neq 0,-1,-2,\ldots$ one has that
\begin{equation}\label{eq:PolygammaSeriesFormula}
\psi^{(m)}(x)=\sum\limits_{k=0}^{\infty}\frac{(-1)^{m+1}m!}{(x+k)^{m+1}}.
\end{equation}
Hence, according to \cite{AS64}, we conclude that
\begin{equation}\label{eq:PolygammaBound}
|\psi^{(m)}(x)|\leq \frac{(m-1)!}{x^m}+\frac{m!}{x^{m+1}}.
\end{equation}

The following three propositions provide identities or estimates for sums of polygamma functions. They will be important in the proof of our cumulant estimates provided in Section \ref{sec:CumulantsResults}, which in turn are the key to establish our main central limit theorem.

\begin{proposition}\label{prop:DigammaSum}
For any $a\in(0,\infty)$ and $k\in\N$, $k\ge 2$ we have
\begin{align*}
\frac12\sum\limits_{j=1}^k\psi\left({j+a\over 2}\right)&=
\left({k-c\over 2}+{a\over 2}-{1\over 2}\right)\psi (a+k-c-1)+{c\over 2}\psi\left(a+k-1\right)+{1\over 4}\psi\left({a+k\over 2}\right)\\
&-\left({a\over 2}-{1\over 2}\right)\psi(a+1)-{1\over 4}\psi\left({a\over 2}+1\right)-{k\over 2}\left(1+\log 2\right) +1+2c,
\end{align*}
where $c:= k\mod 2$, which is equal to $0$ if $k$ is even and equal to $1$ if $k$ is odd.
\end{proposition}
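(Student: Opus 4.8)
The plan is to establish the identity by evaluating the sum $\frac12\sum_{j=1}^k\psi\left(\frac{j+a}{2}\right)$ through a reduction-of-argument trick for the digamma function. The key tool is the duplication-type relation, or more elementarily the recursion $\psi(x+1)=\psi(x)+\frac1x$, applied in the variant $\psi\left(\frac{x}{2}+\frac12\right)+\psi\left(\frac{x}{2}\right)=2\psi(x)-2\log 2$ (the Legendre duplication formula for $\log\Gamma$, differentiated). First I would split the sum $\sum_{j=1}^k\psi\left(\frac{j+a}{2}\right)$ according to the parity of $j$: writing $j=2i$ for the even indices and $j=2i-1$ for the odd ones, the summand becomes $\psi\left(i+\frac a2\right)$ respectively $\psi\left(i+\frac{a-1}{2}\right)$, so the whole sum collapses into two sums of digamma values at arguments in arithmetic progression with step $1$.

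Next I would evaluate each such sum in closed form. The basic observation is $\sum_{i=1}^{m}\psi(b+i) = \sum_{i=1}^m\bigl(\psi(b+i-1)+\tfrac{1}{b+i-1}\bigr)$, but more usefully one has the telescoping-by-parts identity $\sum_{i=1}^{m}\psi(b+i)=(m+b)\psi(b+m)-m-b\,\psi(b)$ — this follows because $\frac{d}{dx}\bigl[(x)\psi(x)-x\bigr]=\psi(x)$ up to the functional equation, or can be checked directly by induction on $m$ using $\psi(x+1)=\psi(x)+1/x$. Applying this with the appropriate $b$ and $m$ (where $m\approx k/2$ and the exact endpoints depend on the parity $c=k\bmod 2$) to each of the two parity-sums, and then recombining, yields an expression involving $\psi$ at the four arguments $a+k-c-1$, $a+k-1$, $\frac{a+k}{2}$, $\frac a2+1$, plus a linear term in $k$ coming from the $-m-b\psi(b)$ contributions and from the $-2\log 2$ per term of the duplication formula wherever it is invoked. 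The $\frac14\psi\left(\frac{a+k}{2}\right)$ and $\frac14\psi\left(\frac a2+1\right)$ terms with their fractional coefficients are the fingerprint of exactly one application of the duplication formula at the top of one of the two sums; I would track where it enters carefully.

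The case split on $c$ is where bookkeeping errors are most likely, so I would handle $k$ even and $k$ odd separately and then observe that both can be written uniformly using $c$: when $k$ is even the even indices run $i=1,\dots,k/2$ and the odd ones $i=1,\dots,k/2$, whereas when $k$ is odd the odd indices run $i=1,\dots,(k+1)/2$ and the even ones $i=1,\dots,(k-1)/2$, which is why the coefficients $\frac{k-c}{2}+\frac a2-\frac12$ and $\frac c2$ appear as they do in front of $\psi(a+k-c-1)$ and $\psi(a+k-1)$ respectively (in the even case the second term vanishes since $c=0$, and the first has the full weight). The constant $1+2c$ likewise absorbs the difference in the number of $\psi(b)$-type endpoint contributions between the two parities.

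The main obstacle I anticipate is purely the precise matching of all additive constants and the $-\frac k2(1+\log 2)$ term: one must be scrupulous about which sum receives the duplication formula, about the shift in indices ($i$ versus $i-1$) in the by-parts identity, and about the boundary terms $\psi(a+1)$ and $\psi\left(\frac a2+1\right)$ arising from the lower end $j=1$ of the original sum. A safe way to finish is to verify the claimed identity directly for $k=2$ and $k=3$ (small enough to compute both sides by hand using only $\psi(x+1)=\psi(x)+1/x$ and the duplication formula) and then argue that both sides satisfy the same recursion in $k$ — namely, increasing $k$ by $2$ adds $\frac12\psi\left(\frac{k+1+a}{2}\right)+\frac12\psi\left(\frac{k+2+a}{2}\right)$ to the left-hand side, and one checks the right-hand side changes by the same amount using the functional equations. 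This recursion-plus-base-case strategy sidesteps the most error-prone global bookkeeping and is the approach I would actually write up.
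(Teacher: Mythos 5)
Your plan is sound and takes a genuinely different route from the paper's. The paper first pairs the terms $j=2i-1,2i$ via the differentiated Legendre duplication formula, reducing everything to $\sum_{j=1}^p\psi(2j-1+a)$ with $p=\lfloor k/2\rfloor$, and then evaluates that sum by expanding each $\psi(2j-1+a)$ as $\psi(a+1)+\sum_{l=1}^{2j-2}(a+l)^{-1}$ and interchanging the order of summation. You instead split by parity first, evaluate each of the two resulting unit-step sums with the closed form $\sum_{i=1}^m\psi(b+i)=(m+b)\psi(b+m)-m-b\,\psi(b)$ (which is correct: it follows by induction from $\psi(x+1)=\psi(x)+1/x$), and only then invoke duplication to recombine; this identity is a tidy repackaging of the paper's double-sum manipulation. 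Your fallback of checking small cases and then running a step-$2$ recursion in $k$ is also legitimate, since $c=k\bmod 2$ is preserved under $k\mapsto k+2$ and both sides change by amounts comparable via the functional equations.

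There is, however, one thing you must know before writing this up: the identity as printed is false for odd $k$, and your own proposed base-case check at $k=3$ will expose this. Taking $k=3$, $a=1$ (so $\psi(1)=-0.5772\ldots$, $\psi(3/2)=0.0365\ldots$, $\psi(2)=0.4228\ldots$), the left-hand side is $\approx-0.0590$ while the printed right-hand side is $\approx 1.4410$; the discrepancy is exactly $3/2$ for every odd $k$ and every $a$. The paper's own intermediate computation is correct and yields, with $p=(k-c)/2$,
\begin{align*}
\frac12\sum_{j=1}^k\psi\Big(\frac{j+a}{2}\Big)&=\Big(p+\frac a2-\frac12\Big)\psi(a+2p-1)-\Big(\frac a2-\frac12\Big)\psi(a+1)+\frac14\psi\Big(\frac a2+p\Big)\\
&\qquad-\frac14\psi\Big(\frac a2+1\Big)+\frac c2\,\psi\Big(\frac{k+a}{2}\Big)-p(1+\log 2)+1;
\end{align*}
the error enters only in the final rewriting into the displayed form, where the additive constant should be $1+\frac c2$ rather than $1+2c$ (for $c=0$ the two displayed forms coincide and the statement is correct as it stands). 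So your method, carried out faithfully, proves the corrected identity rather than the printed one. Since the proposition is only used to extract the expansion of $\Ex Y_{\mu,n}$ up to $O(1)$, the discrepancy is harmless downstream, but without this warning your $k=3$ verification would leave you hunting for a bookkeeping error of your own that is not there.
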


\begin{proof}
Put $p:=\lfloor{k\over 2}\rfloor$ and $c:= k\mod 2$. Legrendre's duplication formula (see \cite{NIST}) says that
$$
\Gamma(2x) = {2^{2x-1}\Gamma(x)\Gamma\left(x+{1\over 2}\right)\over\sqrt{\pi}},\qquad x\in(0,\infty).
$$
As a consequence,
\begin{align*}
{\dd\over \dd x}\log\Gamma(2x)  &={\dd\over\dd x}\Big[(2x-1)\log 2-{1\over 2}\log\pi+\log\Gamma(x)+\log\Gamma\Big(x+{1\over 2}\Big)\Big]\\
&=2\log 2+\psi(x)+\psi\Big(x+{1\over 2}\Big).
\end{align*}
Moreover, since ${\dd\over \dd x}\log\Gamma(2x)=2\psi(2x)$ we deduce that
\begin{equation}\label{eq:DoublingPsi}
{1\over 2}\left(\psi(x)+\psi\left(x+\frac12\right)\right)=\psi(2x)-\log 2.
\end{equation}
Using the above equality we have
\begin{align}
\frac12\sum\limits_{j=1}^k\psi\left({j+a\over 2}\right)&=\sum\limits_{j=1}^p{1\over 2}\left(\psi\left({2j-1+a\over 2}\right)+\psi\left({2j+a\over 2}\right)\right)+{c\over 2}\psi\left({k+a\over 2}\right)\notag\\
&=\sum\limits_{j=1}^p\psi(2j-1+a)-p\log 2+{c\over 2}\psi\left({k+a\over 2}\right).\label{eq:1}
\end{align}
Applying the identity $\psi(x+1)=\psi(x)+{1\over x}$, which follows from the definition of the function $\psi$ together with the fact that $\Gamma(x+1)=x\Gamma(x)$, $x\in(0,\infty)$, we get
\begin{align*}
\sum\limits_{j=1}^p\psi(2j-1+a)&=\sum\limits_{j=1}^p\sum\limits_{l=1}^{2j-2}{1\over a+l}+p\psi(a+1)\\
&=p\sum\limits_{l=1}^{2p-2}{1\over a+l}-\sum\limits_{j=1}^{p-1}j\left({1\over a+2j-1}+{1\over a+2j}\right)+p\psi(a+1)\\
&=\sum\limits_{j=1}^{p-1}{p-j\over a+2j-1}+\sum\limits_{j=1}^{p-1}{p-j\over a+2j}+p\psi(a+1)\\
&=\left(p+{a\over 2}-{1\over 2}\right)\sum\limits_{j=1}^{p-1}{1\over a+2j-1}+\left(p+{a\over 2}\right)\sum\limits_{j=1}^{p-1}{1\over a+2j}-p+1+p\psi(a+1)\\
&=\left(p+{a\over 2}-{1\over 2}\right)\sum\limits_{j=1}^{2p-2}{1\over a+j}+{1\over 4}\sum\limits_{j=1}^{p-1}{1\over {a\over 2}+j}-p+1+p\psi(a+1).
\end{align*}
Finally, using the equalities
\[
\sum\limits_{j=1}^{2p-2}{1\over a+j}=\psi(a+2p-1)-\psi(a+1)
\]
and
$$
\sum\limits_{j=1}^{p-1}{1\over {a\over 2}+j}=\psi\left({a\over 2}+p\right)-\psi\left({a\over 2}+1\right)
$$
we conclude that
\begin{align*}
\sum\limits_{j=1}^p\psi(2j-1+a)&=\left(p+{a\over 2}-{1\over 2}\right)\psi (a+2p-1)-\left({a\over 2}-{1\over 2}\right)\psi(a+1)\\
&+{1\over 4}\psi\left({a\over 2}+p\right)-{1\over 4}\psi\left({a\over 2}+1\right)-p +1.
\end{align*}
Substituting this into \eqref{eq:1} completes the proof.
\end{proof}

\begin{proposition}\label{prop:DerivativeDigammaSum}
For any $a\in(0,\infty)$ and $k\in\N$, $k\ge 2$ we have
\begin{align*}
&\frac14\sum\limits_{j=1}^k\psi^{(1)}\left({j+a\over 2}\right)\\
&={1\over 2}\left(\psi(a+k-c+1)-\psi(a+1)\right)+{a\over 2}\left(\psi^{(1)}(a+k-c+1)-\psi^{(1)}(a+1)\right)\\
&\quad -{1\over 8}\Big(\psi^{(1)}\Big({a+k-c+1\over 2}\Big)-\psi^{(1)}\Big({a+1\over 2}\Big)\Big)+{k-c\over 2}\psi^{(1)}(a+k-c+1)+{c\over 4}\psi^{(1)}\left({k+a\over 2}\right),
\end{align*}
where $c:= k\mod 2$.
\end{proposition}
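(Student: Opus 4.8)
The quickest route is to differentiate the identity of Proposition~\ref{prop:DigammaSum} with respect to the parameter $a$. Since $\frac{\dd}{\dd a}\psi\big(\frac{j+a}{2}\big)=\frac12\psi^{(1)}\big(\frac{j+a}{2}\big)$, the left-hand side of Proposition~\ref{prop:DigammaSum} differentiates to precisely $\frac14\sum_{j=1}^k\psi^{(1)}\big(\frac{j+a}{2}\big)$, the quantity we want. Differentiating the right-hand side term by term --- using the product rule on the two terms of the form $(\text{affine in }a)\cdot\psi(\cdot)$, and $\frac{\dd}{\dd a}\psi(x(a))=x'(a)\,\psi^{(1)}(x(a))$ on the remaining ones, while the constants drop out --- yields a closed expression for $\frac14\sum_{j=1}^k\psi^{(1)}\big(\frac{j+a}{2}\big)$ in terms of $\psi$ and $\psi^{(1)}$ evaluated at the arguments $a+k-c-1$, $a+k-1$, $\frac{a+k}{2}$, $a+1$ and $\frac a2+1$ already occurring in Proposition~\ref{prop:DigammaSum}.

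Equivalently, one may reprove the statement from scratch along the lines of the proof of Proposition~\ref{prop:DigammaSum}: differentiating \eqref{eq:DoublingPsi} gives $\frac14\big(\psi^{(1)}(x)+\psi^{(1)}(x+\tfrac12)\big)=\psi^{(1)}(2x)$, which collapses the sum into $\sum_{j=1}^p\psi^{(1)}(2j-1+a)+\frac c4\psi^{(1)}\big(\frac{k+a}{2}\big)$ with $p:=\lfloor k/2\rfloor$ (no $\log 2$ survives the differentiation). The once-differentiated recurrence $\psi^{(1)}(x+1)=\psi^{(1)}(x)-x^{-2}$ then plays the role of $\psi(x+1)=\psi(x)+x^{-1}$: it gives $\psi^{(1)}(2j-1+a)=\psi^{(1)}(a+1)-\sum_{l=1}^{2j-2}(a+l)^{-2}$, and after the same interchange of summation and reindexing as in the cited proof --- with every $\frac1{a+l}$ replaced by $\frac1{(a+l)^2}$ --- one is left with the finite sums $\sum_{l=1}^{2p-2}(a+l)^{-2}$, $\sum_{j=1}^{p-1}(a+2j)^{-2}$ and $\sum_{l=1}^{2p-2}(a+l)^{-1}$. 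These are closed by \eqref{eq:PolygammaSeriesFormula} into differences of $\psi^{(1)}$ at halved and unhalved arguments and a difference of $\psi$; substituting $p=\frac{k-c}{2}$ produces the same closed form as the first approach.

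The only point requiring care in either approach is landing on the precise arrangement in the statement. The closed form that falls out naturally carries $\psi$ and $\psi^{(1)}$ at $a+k-c-1$ and $\frac a2+1$, whereas the asserted identity is written with $a+k-c+1$ and $\frac{a+1}{2}$; one passes between the two using a bounded number of applications of $\psi(x+1)=\psi(x)+x^{-1}$, $\psi^{(1)}(x+1)=\psi^{(1)}(x)-x^{-2}$ and the differentiated duplication formula above, and checks that all the spurious $(a+\cdot)^{-1}$ and $(a+\cdot)^{-2}$ terms then cancel (this can be verified directly; already the case $k=2$, where both sides equal $\psi^{(1)}(a+1)$, is instructive). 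I expect this final reorganization --- together with tracking the parity term $c$ and the two slightly different prefactors $p+\frac a2-\frac12$ and $p+\frac a2$ attached to the two partial sums --- to be the only real obstacle; everything else is the routine $\psi^{(1)}$-analogue of the computation already carried out for Proposition~\ref{prop:DigammaSum}.
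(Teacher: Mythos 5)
Your primary route --- differentiating the identity of Proposition~\ref{prop:DigammaSum} with respect to the parameter $a$ --- is sound and genuinely different from the paper's argument. The paper does not differentiate anything: it expands each $\psi^{(1)}\big(\tfrac{j+a}{2}\big)$ via the series representation \eqref{eq:PolygammaSeriesFormula}, splits the sum over $j$ by parity, and evaluates the resulting double sums $T(b)=\sum_{i=1}^p\sum_{j\ge i}(2j+b)^{-2}$ by interchanging the order of summation before closing them up again with \eqref{eq:PolygammaSeriesFormula}. That is precisely the $\psi^{(1)}$-analogue of the computation behind Proposition~\ref{prop:DigammaSum}, so your second, ``from scratch'' route is essentially the paper's proof (with infinite tails in place of your finite telescoping sums), whereas the differentiation route buys the whole computation for free at the price of a final reconciliation between two equivalent closed forms. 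That reconciliation is the one step you leave unverified beyond $k=2$, and it is the entire content of matching the asserted statement, so it should be written out; it does go through --- for instance for even $k$, setting $q=a+k-1$ and using $\psi(x+1)=\psi(x)+x^{-1}$, $\psi^{(1)}(x+1)=\psi^{(1)}(x)-x^{-2}$ and the duplication formula $\psi^{(1)}(x)+\psi^{(1)}\big(x+\tfrac12\big)=4\psi^{(1)}(2x)$, the difference of the two forms collapses to $\tfrac{1}{2q}-\tfrac{q+1}{2q^2}+\tfrac{1}{2q^2}=0$.

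One caveat you should address explicitly: differentiating Proposition~\ref{prop:DigammaSum} \emph{as printed} is delicate, because its additive constant does not appear to match what its own proof produces when $k$ is odd (for $k=3$ the two sides differ numerically by $\tfrac32$; the proof yields $-p\log 2-p+1$, not $-\tfrac{k}{2}(1+\log 2)+1+2c$). Since the discrepancy is a constant in $a$, it is invisible to $\tfrac{\dd}{\dd a}$ and your conclusion survives, but strictly speaking you should differentiate the (correct) $a$-dependent part extracted from that proof rather than the displayed formula.
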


\begin{proof}
We will again use the notation $p:=\lfloor{k\over 2}\rfloor$ and $c:= k\mod 2$. Applying \eqref{eq:PolygammaSeriesFormula} we have that
\begin{align}
&\frac14\sum\limits_{j=1}^k\psi^{(1)}\left({j+a\over 2}\right)\notag=\sum\limits_{i=1}^k\sum\limits_{j=0}^{\infty}{1\over (i+a+2j)^2}\notag\\
&=\sum\limits_{i=1}^p\sum\limits_{j=0}^{\infty}{1\over (2i+a+2j)^2}+\sum\limits_{i=1}^p\sum\limits_{j=0}^{\infty}{1\over (2i+a+2j-1)^2}+{c\over 4}\psi^{(1)}\left({k+a\over 2}\right).\label{eq:2}
\end{align}
For $b\in(0,\infty)$ consider the following expression:
$$
T(b):=\sum\limits_{i=1}^p\sum\limits_{j=i}^{\infty}{1\over (2j+b)^2}.
$$
Then
\begin{align*}
T(b)&=\sum\limits_{i=1}^p\sum\limits_{j=i}^{p}{1\over (2j+b)^2}+\sum\limits_{i=1}^p\sum\limits_{j=p+1}^{\infty}{1\over (2j+b)^2}\\
&=\frac12\sum\limits_{i=1}^p{2i\over (2i+b)^2}+p\sum\limits_{j=p+1}^{\infty}{1\over (2j+b)^2}\\
&=\frac12\sum\limits_{i=1}^p{1 \over 2i+b}-{b\over 2}\sum\limits_{i=1}^p{1\over (2i+b)^2}+p\sum\limits_{j=p+1}^{\infty}{1\over (2j+b)^2}\\
&=\frac14\sum\limits_{i=1}^p{1 \over i+{b\over 2}}-{b\over 8}\sum\limits_{j=0}^\infty{1\over \left(i+1+{b\over 2}\right)^2}+\left({p\over 4}+{b\over 8}\right)\sum\limits_{j=0}^{\infty}{1\over \left(j+{b\over 2}+p+1\right)^2}.
\end{align*}
Using now \eqref{eq:PolygammaSeriesFormula} we conclude that
\begin{align*}
T(b) &= {1\over 4}\psi\left({b\over 2}+p+1\right)-{1\over 4}\psi\left({b\over 2}+1\right)-{b\over 8}\psi^{(1)}\left({b\over 2}+1\right)+\left({p\over 4}+{b\over 8}\right)\psi^{(1)}\left({b\over 2}+p+1\right).
\end{align*}
Moreover, note that similarly to \eqref{eq:DoublingPsi} one has the duplication formula
$$
\psi^{(1)}(2x) = {1\over 4}\left(\psi^{(1)}(x)+\psi^{(1)}\left(x+{1\over 2}\right)\right),\qquad x\in(0,\infty).
$$
Using this together with \eqref{eq:DoublingPsi} and substituting the above representation for $T(b)$ into \eqref{eq:2} we get
\begin{align*}
&\frac14\sum\limits_{j=1}^k\psi^{(1)}\left({j+a\over 2}\right)=T(a)+T(a-1)+{c\over 4}\psi^{(1)}\left({k+a\over 2}\right)\\
&={1\over 2}\left(\psi(a+2p+1)-\psi(a+1)\right)-{a\over 2}\left(\psi^{(1)}(a+1)-\psi^{(1)}(a+2p+1)\right)\\
&\qquad -{1\over 8}\Big(\psi^{(1)}\Big({a+2p+1\over 2}\Big)-\psi^{(1)}\Big({a+1\over 2}\Big)\Big)+p\psi^{(1)}(a+2p+1)+{c\over 4}\psi^{(1)}\left({k+a\over 2}\right).
\end{align*}
This completes the proof upon replacing $2p$ by $k-c$.
\end{proof}

\begin{proposition}\label{prop:DerivativeDigammaSumBound}
For any $a\in(0,\infty)$ and $k,m\in\N$, $k,m\ge 2$ we have that 
\[
\left|{1\over 2^{m+1}}\sum\limits_{j=1}^k\psi^{(m)}\left({j+a\over 2}\right)\right|\leq{4m!\over (a+1)^{m-1}}.
\]
\end{proposition}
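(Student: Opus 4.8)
The plan is to bound the sum termwise using the series representation \eqref{eq:PolygammaSeriesFormula}, just as in the proof of Proposition \ref{prop:DerivativeDigammaSum}, but now we no longer need an exact identity — a crude majorization suffices. First I would write, using \eqref{eq:PolygammaSeriesFormula},
\[
{1\over 2^{m+1}}\sum_{j=1}^k\psi^{(m)}\left({j+a\over 2}\right) = (-1)^{m+1}m!\sum_{j=1}^k\sum_{l=0}^\infty{1\over (j+a+2l)^{m+1}},
\]
so that the absolute value is at most $m!$ times the double sum $\sum_{j=1}^k\sum_{l=0}^\infty (j+a+2l)^{-(m+1)}$ of positive terms.

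Next I would reorganize the double sum over the integer grid $\{j+2l : 1\le j\le k,\ l\ge 0\}$. Each integer $t\ge 1$ is hit at most $k$ times in this way (indeed, at most $\min(t,k,\ldots)$ times, but $k$ is a clean bound; one could even do better and get $\lceil t/2\rceil$, but it is not needed), hence
\[
\sum_{j=1}^k\sum_{l=0}^\infty{1\over (j+a+2l)^{m+1}} \le k\sum_{t=1}^\infty {1\over (t+a)^{m+1}}.
\]
Actually a sharper and cleaner route: for fixed $j$ the inner sum $\sum_{l\ge 0}(j+a+2l)^{-(m+1)}$ equals $2^{-(m+1)}\psi^{(m)}$-type expression bounded via \eqref{eq:PolygammaBound}, but the simplest is to compare with an integral: $\sum_{l\ge 0}(j+a+2l)^{-(m+1)} \le (j+a)^{-(m+1)} + \int_0^\infty (j+a+2x)^{-(m+1)}\,\dd x = (j+a)^{-(m+1)} + {1\over 2m}(j+a)^{-m}$. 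Then summing over $j$ from $1$ to $k$ and again comparing with an integral, $\sum_{j=1}^k (j+a)^{-m} \le (a+1)^{-m} + \int_1^\infty (x+a)^{-m}\,\dd x \le (a+1)^{-m} + {1\over m-1}(a+1)^{-(m-1)}$, and similarly for the $(m+1)$-power sum. Collecting the pieces and using $m\ge 2$ so that $1/(2m)\le 1/4$, $1/(m-1)\le 1$, and $(a+1)^{-m}\le (a+1)^{-(m-1)}$, every term is dominated by a constant multiple of $(a+1)^{-(m-1)}$; keeping track of the numerical constants ($m!$ in front, plus the at most four summands of size $\le (a+1)^{-(m-1)}$ after the two integral comparisons) yields the bound $4m!\,(a+1)^{-(m-1)}$.

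There is no serious obstacle here; the statement is deliberately lossy (the constant $4$ leaves ample room), so the only real task is bookkeeping: choosing the comparisons so that the residual constants multiply out to something $\le 4$. The mildly delicate point — the part I would double-check — is the switch of summation order and the claim that each value of the shifted variable is used at most $k$ times (equivalently, that after bounding the inner geometric-type sum by $(j+a)^{-(m+1)}+{1\over 2m}(j+a)^{-m}$ one still controls the outer sum over $j$ cleanly), since one must avoid accidentally producing a factor growing in $k$: the point is that for $m\ge 2$ the sum $\sum_{j=1}^k (j+a)^{-m}$ converges uniformly in $k$, so no $k$-dependence survives. I would also note that the hypothesis $a\in(0,\infty)$ together with $j\ge 1$ gives $j+a> 1$, which is what makes the integral comparisons valid with the clean endpoint $a+1$.
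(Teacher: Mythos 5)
Your argument is correct, and it reaches the stated bound (in fact a slightly better constant, $2$ in place of $4$) by a genuinely different and more elementary route than the paper. The paper mimics the structure of the proof of Proposition \ref{prop:DerivativeDigammaSum}: it splits the sum according to the parity of $j$, rearranges the resulting double series exactly into an auxiliary quantity $T(b)$ expressed through polygamma values, and only then invokes the pointwise estimate \eqref{eq:PolygammaBound}; this keeps the two propositions structurally parallel but costs a page of bookkeeping and a parity case distinction. You instead exploit that after applying \eqref{eq:PolygammaSeriesFormula} all terms of the double series have the same sign, so the absolute value can be bounded termwise, and then control $\sum_{l\ge 0}(j+a+2l)^{-(m+1)}$ and $\sum_{j=1}^k(j+a)^{-m}$ by integral comparison, using $m\ge 2$ for convergence uniformly in $k$ and $a+1>1$ to absorb the higher powers of $(a+1)^{-1}$. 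Your tally of the four residual summands, each at most $(a+1)^{-(m-1)}$, is accurate, so the constant $4$ comes out exactly. Two remarks: the first reorganization you sketch (counting multiplicities by $k$) would indeed leave a spurious factor of $k$ and must be discarded, as you do; and the whole argument needs no parity split, which is a genuine simplification over the paper's proof.
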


\begin{proof}
We proceed analogously to the proof of Proposition \ref{prop:DerivativeDigammaSum}.
Using the notation $p:=\lfloor{k\over 2}\rfloor$ and $c:= k\mod 2$, and applying \eqref{eq:PolygammaSeriesFormula} we write
\begin{align*}
{1\over 2^{m+1}}\sum\limits_{j=1}^k\psi^{(m)}\left({j+a\over 2}\right)&=\sum\limits_{i=1}^p\sum\limits_{j=0}^{\infty}{(-1)^{m+1}m!\over (2i+a+2j)^{m+1}}\\
&+\sum\limits_{i=1}^p\sum\limits_{j=0}^{\infty}{(-1)^{m+1}m!\over (2i+a+2j-1)^{m+1}}+{c\over 2^{m+1}}\psi^{(m)}\left({k+a\over 2}\right).
\end{align*}
Consider for $b\in(0,\infty)$ the following expression:
\begin{align*}
T(b):&=\sum\limits_{i=1}^p\sum\limits_{j=i}^{\infty}{(-1)^{m+1}m!\over (2j+b)^{m+1}}\\
&=\sum\limits_{i=1}^p\sum\limits_{j=i}^{p}{(-1)^{m+1}m!\over (2j+b)^{m+1}}+\sum\limits_{i=1}^p\sum\limits_{j=p+1}^{\infty}{(-1)^{m+1}m!\over (2j+b)^{m+1}}\\
&=\frac12\sum\limits_{i=1}^p{2i(-1)^{m+1}m!\over (2i+b)^{m+1}}+p\sum\limits_{j=p+1}^{\infty}{(-1)^{m+1}m!\over (2j+b)^{m+1}}\\
&=\frac12\sum\limits_{i=1}^p{(-1)^{m+1}m! \over (2i+b)^{m}}-{b\over 2}\sum\limits_{i=1}^p{(-1)^{m+1}m!\over (2i+b)^{m+1}}+p\sum\limits_{j=p+1}^{\infty}{(-1)^{m+1}m!\over (2j+b)^{m+1}}\\
&=-{m\over 2^{m+1}}\sum\limits_{i=1}^p{(-1)^{m}(m-1)! \over \left(i+{b\over 2}\right)^m}-{b\over 2^{m+2}}\sum\limits_{i=1}^\infty{(-1)^{m+1}m!\over \left(i+{b\over 2}\right)^{m+1}}\\
&\hspace{4cm}+\left({2p+b\over 2^{m+2}}\right)\sum\limits_{j=0}^{\infty}{(-1)^{m+1}m!\over \left(j+{b\over 2}+p+1\right)^{m+1}}.
\end{align*}
Then, using \eqref{eq:PolygammaSeriesFormula} and estimate \eqref{eq:PolygammaBound} we obtain
\begin{align*}
\left|T(b)\right|&\leq{(2m-1)(m-2)!\over 4|b+2|^{m-1}}+{m!\over |b+2|^{m}}+{|2p+b|(m-1)!\over 4|b+2p+2|^{m}}+{|2p+b|m!\over 2|b+2p+2|^{m+1}}\\
&\leq{(2m-1)(m-2)!\over 4|b+2|^{m-1}}+{m!\over |b+2|^{m}}+{(m-1)!\over 4|b+2p+2|^{m-1}}+{m!\over 2|b+2p+2|^{m}}.
\end{align*}
Finally, we get
\begin{align*}
\left|\frac14\sum\limits_{j=1}^k\psi^{(1)}\left({j+a\over 2}\right)\right|&\leq\left|T(a)\right|+\left|T(a-1)\right|+\left|{1\over 2^{m+1}}\psi^{(m)}\left({k+a\over 2}\right)\right|\\
&\leq{(2m-1)(m-2)!\over 2|a+1|^{m-1}}+{2m!\over |a+1|^{m}}+{(m-1)!\over 2|a+k|^{m-1}}+{m!\over |a+k|^{m}}\\
&\qquad\qquad+{(m-1)!\over 2|a+k|^{m}}+{m!\over |a+k|^{m+1}}\\
&\leq {5\over 2} {m!\over |a+1|^{m-1}}+{3\over 2}{m!\over |a+k|^{m-1}}.
\end{align*}
This proves the claim.
\end{proof}

\section{Cumulant estimates and their consequences}\label{sec:CumulantsResults}

This section is devoted to a description of the asymptotic probabilistic behaviour of the random variables
\begin{equation}\label{eq:DefYmun}
Y_{\mu,n}:=\log V_n(Z_{\mu}),\qquad \mu\in(-2,\infty),
\end{equation}
in the high dimensional regime, that is, as $n\to\infty$. We recall at this occasion that $Z_\mu$ is a random simplex with distribution 
$\Pr_\mu^0$ given by \eqref{eq:DefWeightedCellIndicator}. That is, $Z_\mu$ is a typical $V_n^{\mu+1}$-weighted random simplex from the Poisson-Delaunay tessellation in $\R^n$ with intensity $\gamma$. In our set-up we will allow the parameter $\mu$ and the intensity $\gamma$ of the underlying Poisson point process to depend on the dimension parameter $n$, although this dependence is suppressed in our notation for simplicity.

Our strategy is as follows. First, we derive in Section \ref{subsec:CumGen} asymptotic expansions for the expectation and the variance and provide sharp bounds for the cumulants of the random variables $Y_{\mu,n}$. They are specialized in Section \ref{subsec:CumSpecial} for different growth rates of $\mu$ with respect to $n$. These estimates are then used in Section \ref{subsec:MainResults} to prove our main result, Theorem \ref{thm:Main}, based on the general limit theorems for large deviations from \cite{SS91}. In particular, this includes part (i) and (ii) of Theorem \ref{thm:Intro} from the introduction as a special case.

\subsection{General bounds for cumulants}\label{subsec:CumGen}

Given an $m\in\mathbb{N}$ and a random variable $X$ with $\Ex|X|^m<\infty$, let $c_m(X)$ be the $m$-th order cumulant of $X$ defined by
\[
c_m(X)=(-\mathfrak{i})^m\frac{\dd^m}{\dd t^m}\log \Ex\left[e^{\mathfrak{i}tX}\right]\Big|_{t=0},
\]
where $\mathfrak{i}$ is the imaginary unit.
The aim of this section is to derive the bounds for the cumulants of random variable $Y_{\mu,n}$. In addition, we will determine an asymptotic expansion for the expectation and the variance of $Y_{\mu,n}$, as $n\to\infty$. As a special case, this provides a proof of Theorem \ref{thm:Intro} (i) from the introduction when we take $\mu=-1$.

\begin{proposition}\label{lem:CumulantBounds}
For any $\mu\in(-2,\infty)$ and $n\in\N$, $n\ge 2$ we have
\begin{align*}
\Ex Y_{\mu,n}&=-{n\over 2}\log n-n\log\sqrt{2\pi}-\log\gamma +\left({\mu\over 2}+{7\over 4}\right)\log\left(n+\mu\right)+\frac{1}{2}\log n\\
&\qquad-{\mu+1\over 2}\psi(\mu+3)-{1\over 4}\psi\left({\mu\over 2}+2\right)+O(1),\\
\Var Y_{\mu,n}&={3-n\over 2(n+\mu)}+{2n\over (n+\mu)^2}+\frac12\log(n+\mu)+{1\over 2}-\frac12\psi(\mu+3)-{\mu+2\over 2}\psi^{(1)}\left(\mu+3\right)\\
&\qquad+{1\over 8}\psi^{(1)}\left({\mu+3\over 2}\right)+O\left({1\over (n+\mu)^2}\right).
\end{align*}
Moreover, for $m\in\N$, $m\geq 3$ we have
\begin{align*}
\left|c_m(Y_{\mu,n})\right|&\leq{(3n+4)(m-2)!\over 2(n+\mu)^{m-1}}+{(2n+3)(m-1)!\over (n+\mu)^{m}}+{4(m-1)!\over (\mu+3)^{m-2}}.
\end{align*}
\end{proposition}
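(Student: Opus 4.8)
The starting point is that, since $Y_{\mu,n}=\log V_n(Z_\mu)$, the moment generating function $s\mapsto\Ex[e^{sY_{\mu,n}}]$ coincides with $s\mapsto\Ex V_n(Z_\mu)^s$, for which \eqref{eq:Moments} provides a closed formula. Taking logarithms, the cumulant generating function $K(s):=\log\Ex[e^{sY_{\mu,n}}]$ equals, up to the $s$-independent constant $\log C_\mu$ (which is irrelevant, since all cumulants of order $\ge 1$ are obtained by differentiating in $s$ and setting $s=0$), a linear function of $s$ plus finitely many terms of the form $\log\Gamma(\alpha+\beta s)$ with $\beta\in\{1,\tfrac12,\tfrac n2,\tfrac{n+1}{2}\}$ and explicit $\alpha$. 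Since $\tfrac{\dd^m}{\dd s^m}\log\Gamma(\alpha+\beta s)=\beta^m\psi^{(m-1)}(\alpha+\beta s)$, differentiating $K$ exactly $m$ times and evaluating at $s=0$ gives, for $m\ge 2$,
\begin{align*}
c_m(Y_{\mu,n})&=\Bigl(\tfrac{n+1}{2}\Bigr)^m\psi^{(m-1)}\Bigl(\tfrac{(n+1)(n+\mu)}{2}+1\Bigr)-\Bigl(\tfrac n2\Bigr)^m\psi^{(m-1)}\Bigl(\tfrac{n(n+\mu+1)}{2}\Bigr)+\psi^{(m-1)}(n+\mu+1)\\
&\qquad-\tfrac{n+1}{2^m}\psi^{(m-1)}\Bigl(\tfrac{n+\mu}{2}+1\Bigr)+\tfrac1{2^m}\sum_{i=1}^n\psi^{(m-1)}\Bigl(\tfrac{i+\mu+2}{2}\Bigr),
\end{align*}
and for $m=1$ the analogous identity with every $\psi^{(m-1)}$ replaced by $\psi$ and every $\beta^m$ by $\beta$, together with the extra summand $\log\bigl[\Gamma(\tfrac n2+1)/(\gamma\pi^{n/2}n!)\bigr]$ coming from the linear part of $K$.

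For $m\ge 3$ I would estimate the first four summands above by the elementary bound \eqref{eq:PolygammaBound} for $|\psi^{(m-1)}|$. The powers of $\tfrac{n+1}{2}$, $\tfrac n2$ and $\tfrac12$ cancel against the corresponding powers of the arguments (all positive, since $\mu>-2$ and $n\ge 2$), and bounding $n+\mu+1$ and $n+\mu+2$ from below by $n+\mu$ in the denominators shows that these four terms together are at most $\tfrac{(3n+4)(m-2)!}{2(n+\mu)^{m-1}}+\tfrac{(n+4)(m-1)!}{(n+\mu)^m}$, which is absorbed by the claimed bound since $n+4\le 2n+3$. The fifth summand equals $\tfrac1{2^{(m-1)+1}}\sum_{j=1}^n\psi^{(m-1)}\bigl(\tfrac{j+a}{2}\bigr)$ with $a=\mu+2>0$, so Proposition \ref{prop:DerivativeDigammaSumBound}, applied with $n$ in place of $k$ and $m-1$ in place of $m$ (both $\ge 2$), bounds it by $\tfrac{4(m-1)!}{(\mu+3)^{m-2}}$. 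Summing the three contributions proves the cumulant estimate.

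For the expectation ($m=1$) and the variance ($m=2$) one additionally has to turn the exact expressions into asymptotic expansions. For $\Ex Y_{\mu,n}$ I would apply Stirling's formula \eqref{eq:AsympGamma}, \eqref{eq:Stirling} to $\log[\Gamma(\tfrac n2+1)/(\gamma\pi^{n/2}n!)]$ and the digamma expansion \eqref{eq:AsympDigamma} to the four isolated $\psi$-values (whose arguments all tend to infinity), and rewrite $\tfrac12\sum_{i=1}^n\psi(\tfrac{i+\mu+2}{2})$ via Proposition \ref{prop:DigammaSum} with $a=\mu+2$, $k=n$, once more expanding the resulting large-argument digammas by \eqref{eq:AsympDigamma}. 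For $\Var Y_{\mu,n}=c_2(Y_{\mu,n})$ the same scheme applies, now with \eqref{eq:AsympPolygamma} for the four isolated trigamma values and Proposition \ref{prop:DerivativeDigammaSum} for $\tfrac14\sum_{i=1}^n\psi^{(1)}(\tfrac{i+\mu+2}{2})$, followed by \eqref{eq:AsympDigamma} and \eqref{eq:AsympPolygamma} applied to the remaining large-argument $\psi$ and $\psi^{(1)}$ values. In both cases the parity term $c:=n\bmod 2$ produced by Propositions \ref{prop:DigammaSum} and \ref{prop:DerivativeDigammaSum} has to be removed, which is done using $\psi(x+1)=\psi(x)+\tfrac1x$ and its derivatives to show that the $c$-dependent pieces differ from $c$-free ones only by terms that the error absorbs.

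The genuine work is the bookkeeping in this final step. The expansions contain many terms, several of which---for instance $\tfrac{\mu+2}{2(n+\mu)}$ or $\tfrac{n-c}{2(n+\mu)}$---are not individually $O(1)$ once $\mu$ is allowed to depend on $n$, and only after rewriting such quantities (e.g.\ $\tfrac{\mu+2}{2(n+\mu)}=\tfrac12-\tfrac{n-2}{2(n+\mu)}$) and then collecting \emph{all} contributions of order $\tfrac1{n+\mu}$ and $\tfrac1{(n+\mu)^2}$ across the five summands do the stated closed forms emerge, with everything else genuinely absorbed into $O(1)$, respectively $O\bigl(\tfrac1{(n+\mu)^2}\bigr)$. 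Some attention is also needed to keep these error terms controlled throughout the intended regimes of $\mu=\mu(n)$.
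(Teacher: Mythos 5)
Your proposal is correct and follows essentially the same route as the paper: express $\log\Ex V_n(Z_\mu)^s$ via \eqref{eq:Moments} as a sum of log-Gamma terms, differentiate to obtain polygamma expressions for the cumulants, bound the isolated terms by \eqref{eq:PolygammaBound} and the sum over $i$ by Proposition \ref{prop:DerivativeDigammaSumBound} for $m\ge 3$, and use Stirling, \eqref{eq:AsympDigamma}, \eqref{eq:AsympPolygamma} and Propositions \ref{prop:DigammaSum}--\ref{prop:DerivativeDigammaSum} for $m=1,2$. The only (cosmetic) difference is that you keep the unshifted Gamma arguments rather than first applying $\Gamma(x+1)=x\Gamma(x)$ as the paper does, and your constant-tracking for $m\ge 3$ correctly reproduces the stated bound.
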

\begin{proof}
Substituting the moment formula \eqref{eq:Moments} into the definition of the cumulants and using the relation $\Gamma(x+1)=x\Gamma(x)$, $x\in(0,\infty)$, we see that
\begin{align*}
c_m(Y_{\mu,n})&=\frac{\dd^m}{\dd s^m}\log \Ex\left[V_n(Z_{\mu})^{s}\right]\Big|_{s=0}={\bf 1}_{\{m=1\}}\left(\log \Gamma\left({n\over 2}+1\right)-\log\gamma-\frac{n}{2}\log\pi-\log n!\right)\\
&\qquad+\frac{\dd^m}{\dd s^m}\Bigg[\log\Gamma\left(n+\mu+s\right)+\log\Gamma\left({(n+1)(n+\mu)\over 2}+{n+1 \over 2}s\right)\\
&\qquad-\log\Gamma\left({n(n+\mu+1)\over 2}+{n\over 2}s\right)-(n+1)\log\Gamma\left({n+\mu\over 2}+{s\over 2}\right)\\
&\qquad-(n-1)\log(n+\mu+s)+\sum\limits_{i=1}^n\log\Gamma\left({i+\mu+2\over 2}+{s\over 2}\right)\Bigg]\Bigg|_{s=0}
\end{align*}
after simplifications. The above equality can be rewritten in terms of polygamma functions as follows:
\begin{align*}
c_m(Y_{\mu,n})&={\bf 1}_{\{m=1\}}\left(\log\Gamma\left({n\over 2}+1\right)-\log\gamma-\frac{n}{2}\log\pi-\log n!\right)+\psi^{(m-1)}\left(n+\mu\right)\\
&\qquad+{(n+1)^m\over 2^m}\psi^{(m-1)}\left({(n+1)(n+\mu)\over 2}\right)-{n^m\over 2^m}\psi^{(m-1)}\left({n(n+\mu+1)\over 2}\right)\\
&\qquad-{n+1\over 2^m}\psi^{(m-1)}\left({n+\mu\over 2}\right)+{1\over 2^m}\sum\limits_{i=1}^n\psi^{(m-1)}\left({i+\mu+2\over 2}\right)-{n-1\over (n+\mu)^m}.
\end{align*}
We now distinguish the cases $m=1$, $m=2$ and $m\geq 3$. If $m=1$, $c_1(Y_{\mu,n})=\Ex Y_{\mu,n}$ and the above expression is
\begin{align*}
\Ex Y_{\mu,n}&=\log \Gamma\left(\frac{n}{2}\right)+\log n-\log\gamma-\frac{n}{2}\log\pi-\log n!+\psi\left(n+\mu\right)-{n+1\over 2}\psi\left({n+\mu\over 2}\right)\\
&+{n+1\over 2}\psi\left({(n+1)(n+\mu)\over 2}\right)-\frac{n}{2}\psi\left({n(n+\mu+1)\over 2}\right)+\frac12\sum\limits_{i=1}^{n}\psi\left({i+\mu+2\over 2}\right)+O(1).
\end{align*}
Using the asymptotic relations \eqref{eq:AsympGamma} and \eqref{eq:Stirling} together with Proposition \ref{prop:DigammaSum} and the identity $\psi(x+1)=\psi(x)+{1\over x}$, $x\in(0,\infty)$, we obtain
\begin{align*}
\Ex Y_{\mu,n}&=-{n\over 2}\log n-{n\over 2}\log 2+{n\over 2}-{n\over 2}\log\pi-\log\gamma+\psi\left(n+\mu\right)+\frac{n+1}{2}\psi\left({(n+1)(n+\mu)\over 2}\right)\\
&\qquad-\frac{n}{2}\psi\left({n(n+\mu+1)\over 2}\right)-{n+1\over 2}\psi\left({n+\mu\over 2}\right)+ {n+\mu+1\over 2}\psi (n+\mu)+{1\over 4}\psi\left({\mu+n\over 2}\right)\\
&\qquad-{\mu+1\over 2}\psi(\mu+3)-{1\over 4}\psi\left({\mu\over 2}+2\right)-{n\over 2}\left(1+\log 2\right) +O(1).
\end{align*}
Applying now the asymptotics \eqref{eq:AsympDigamma} for the digamma function we conclude that
\begin{align*}
&\Ex Y_{\mu,n}=-{n\over 2}\log n-n\log\sqrt{2\pi}-\log\gamma +\left({n+\mu\over 2}+{7\over 4}\right)\log\left(n+\mu\right)\\
&\quad+\frac{n+1}{2}\log(n+1)-\frac{n}{2}(\log n+\log(n+\mu+1))-{\mu+1\over 2}\psi(\mu+3)-{1\over 4}\psi\left({\mu\over 2}+2\right)+O(1).
\end{align*}
Finally, taking into account that $\log (x+1)=\log x+O(1/x)$, $x\in(0,\infty)$, we obtain
\begin{align*}
\Ex Y_{\mu,n}&=-{n\over 2}\log n-n\log\sqrt{2\pi}-\log\gamma +\left({\mu\over 2}+{7\over 4}\right)\log\left(n+\mu\right)+\frac{1}{2}\log n\\
&\qquad-{\mu+1\over 2}\psi(\mu+3)-{1\over 4}\psi\left({\mu\over 2}+2\right)+O(1).
\end{align*}
Next, we turn to the case $m=2$. Since $c_2(Y_{\mu, n})=\Var Y_{\mu,n}$ we have
\begin{align*}
\Var Y_{\mu,n}&=\psi^{(1)}\left(n+\mu\right)+{(n+1)^2\over 4}\psi^{(1)}\left({(n+1)(n+\mu)\over 2}\right)-{n^2\over 4}\psi^{(1)}\left({n(n+\mu+1)\over 2}\right)\\
&\qquad-{n+1\over 4}\psi^{(1)}\left({n+\mu\over 2}\right)+{1\over 4}\sum\limits_{i=1}^n\psi^{(1)}\left({i+\mu+2\over 2}\right)-{n-1\over (n+\mu)^2}.
\end{align*}
Using now Proposition \ref{prop:DerivativeDigammaSum} together with \eqref{eq:AsympDigamma} and  \eqref{eq:AsympPolygamma} we conclude that
\begin{align*}
\Var Y_{\mu,n}&={1\over n+\mu}-{n\over 2(n+\mu+1)}+{3n\over 2(n+\mu)^2}+\frac12\log(n+\mu+3-c)+{n+\mu+1\over 2(n+\mu+3-c)}\\
&\qquad-\frac12\psi(\mu+3)-{\mu+2\over 2}\psi^{(1)}\left(\mu+3\right)+{1\over 8}\psi^{(1)}\left({\mu+3\over 2}\right)+O\left({1\over (n+\mu)^2}\right).
\end{align*}
Using a Taylor expansion of the functions $\log x$ and $1/x$ we see that
\begin{align*}
\log(n+\mu+3-c)&=\log (n+\mu)+{3-c\over n+\mu}+O\left({1\over (n+\mu)^2}\right),\\
{n\over 2(n+\mu+1)}&={n\over 2(n+\mu)}-{n\over 2(n+\mu)^2}+O\left({1\over (n+\mu)^2}\right),
\end{align*}
and, thus,
\begin{align*}
\Var Y_{\mu,n}&={3-n\over 2(n+\mu)}+{2n\over (n+\mu)^2}+\frac12\log(n+\mu)+{1\over 2}-\frac12\psi(\mu+3)\\
&\qquad-{\mu+2\over 2}\psi^{(1)}\left(\mu+3\right)+{1\over 8}\psi^{(1)}\left({\mu+3\over 2}\right)+O\left({1\over (n+\mu)^2}\right).
\end{align*}
This proves the first two assertions of the proposition. 

We turn now to the case that $m\geq 3$. Applying the estimate \eqref{eq:PolygammaBound} and Proposition \ref{prop:DerivativeDigammaSumBound} for $n\ge 2$, we get
\begin{align*}
\left|c_m(Y_{\mu,n})\right|&\leq\left|\psi^{(m-1)}\left(n+\mu\right)\right|+{(n+1)^m\over 2^m}\left|\psi^{(m-1)}\left({(n+1)(n+\mu)\over 2}\right)\right|\\
&\qquad+{n^m\over 2^m}\left|\psi^{(m-1)}\left({n(n+\mu+1)\over 2}\right)\right|+{n+1\over 2^m}\left|\psi^{(m-1)}\left({n+\mu\over 2}\right)\right|\\
&\qquad+\left|{1\over 2^m}\sum\limits_{i=1}^n\psi^{(m-1)}\left({i+\mu+2\over 2}\right)\right|+{n-1\over (n+\mu)^m}\\
&\leq{(m-2)!\over (n+\mu)^{m-1}}+{(n+4)(m-1)!\over (n+\mu)^{m}}+{(n+1)(m-2)!\over 2(n+\mu)^{m-1}}+{n(m-2)!\over 2(n+\mu)^{m-1}}\\
&\qquad+{(n+1)(m-2)!\over 2(n+\mu)^{m-1}}+{3(m-1)!\over (\mu+3)^{m-2}}+{n-1\over (n+\mu)^m}\\
&\leq{(3n+4)(m-2)!\over 2(n+\mu)^{m-1}}+{(2n+3)(m-1)!\over (n+\mu)^{m}}+{3(m-1)!\over (\mu+3)^{m-2}}.
\end{align*}
This completes the proof.
\end{proof}

\subsection{Asymptotics and bounds in different regimes}\label{subsec:CumSpecial}

In this section we simplify the asymptotic expansions for $\Ex Y_{\mu,n}$ and $\Var Y_{\mu,n}$ as well as the cumulant estimates provided in Proposition \ref{lem:CumulantBounds} for a number of special choices for the growth rate of $\mu$ with respect to the dimension parameter $n$. We recall that the symbol $\gamma$ stands for the intensity of the underlying Poisson point process.

\subsubsection{The case when $n\rightarrow \infty$ and $\mu$ is fixed}

From Proposition \ref{lem:CumulantBounds} we get
\begin{align*}
\Ex Y_{\mu,n}&=-{n\over 2}\log n-n\log\sqrt{2\pi}-\log\gamma +\left({\mu\over 2}+{9\over 4}\right)\log n+O(1)= -{n\over 2}\log n-\log\gamma+O(n)
\end{align*}
and
\begin{align*}
\Var Y_{\mu,n}&=\frac12\log(\mu+n+3)+O(1)= \frac12\log n+O(1).
\end{align*}
In addition, for all $m\geq 3$ we have the bound
\[
\left|c_m(Y_{\mu,n})\right|\leq{6(m-1)!\over (\mu+3)^{m-2}}.
\]
for $n\ge 3$.

\subsubsection{The case when $n\rightarrow \infty$ and $\mu=n^{\alpha}$, $\alpha \in (0,1)$}

By Proposition \ref{lem:CumulantBounds}, Equality \eqref{eq:AsympDigamma} and a Taylor expansion of logarithm we obtain
\begin{align*}
\Ex Y_{\mu,n}&=-{n\over 2}\log n+{1-\alpha\over 2}n^{\alpha}\log n-n\log\sqrt{2\pi}-\log\gamma\\
&\hspace{3cm} +{7-3\alpha\over 4}\log n+\frac12\sum\limits_{i=1}^{\infty}{(-1)^{i+1}\over i!n^{i-(i+1)\alpha}}+O(1)\\
&= -{n\over 2}\log n-\log\gamma + O(n).
\end{align*}
Analogously, using \eqref{eq:AsympDigamma}, \eqref{eq:AsympPolygamma} and a Taylor expansion of the logarithm and the function $1/x$ we get
\begin{align*}
\Var Y_{\mu,n}&={3-n\over 2(n+n^\alpha)}+{2n\over (n+n^\alpha)^2}+{1\over 2}\log(n+n^\alpha)+{1\over 2}-{1\over 2}\log(n^\alpha+3)+{1\over 4(n^\alpha+3)}\\
&\qquad\qquad-{n^\alpha+2\over 2(n^\alpha+3)}-{n^\alpha+2\over 4(n^\alpha+3)^2}+{1\over 4(n^\alpha+3)}+O\left({1\over n^{2\alpha}}\right)\\
&={1-\alpha\over 2}\log n+ O(1),
\end{align*}
and
\begin{align*}
\left|c_m(Y_{\mu,n})\right|&\leq {6(m-1)!\over n^{\alpha(m-2)}}.
\end{align*}
for all $m\geq 3$ and $n\geq 3$.

\subsubsection{The case when $n\rightarrow \infty$ and $\mu=\alpha n$ for some fixed $\alpha$}

Substituting $\mu=\alpha n$ with $\alpha\in(0,\infty)$ into Proposition \ref{lem:CumulantBounds} and using the asymptotic relation \eqref{eq:AsympDigamma} we get
\begin{align*}
\Ex Y_{\mu,n}&=-{n\over 2}\log n-n\log\sqrt{2\pi}-\log\gamma +\left({\alpha n\over 2}+{7\over 4}\right)(\log n + \log\left(1+\alpha\right))+\frac{1}{2}\log n\\
&\qquad\qquad-{\alpha n+1\over 2}(\log\alpha +\log n)-{1\over 4}\log n+O(1)\\
&=-{n\over 2}\log n-{n\over 2}(\log 2\pi-\alpha\log(1+\alpha^{-1}))-\log\gamma +\log n+O(1)\\
&= -{n\over 2}\log n-\log\gamma + O(n).
\end{align*}
Analogously, using \eqref{eq:AsympPolygamma} we obtain
\begin{align*}
\Var Y_{\mu,n}&=-{1\over 2(1+\alpha)}+\frac12\log((1+\alpha)n)-\frac12\log(\alpha n)+O\left({1\over n}\right)\\
&= \frac12\log\left(1+{1\over \alpha}\right)-{1\over 2(1+\alpha)}+O\left({1\over n}\right).
\end{align*}
Finally we have that
\begin{align*}
\left|c_m(Y_{\mu,n})\right|&\leq {6(m-1)!\over(\alpha n)^{m-2}}
\end{align*}
for all $m\geq 3$ and $n\geq 3$.

\subsubsection{The case when $n\rightarrow \infty$ and $n-\mu = o(n)$}

By Proposition \ref{lem:CumulantBounds} and Identity \eqref{eq:AsympDigamma} applied to this case we obtain
\begin{align*}
\Ex Y_{\mu,n}&=-{n\over 2}\log n-n\log\sqrt{2\pi}-\log\gamma +\left({n-o(n)\over 2}+{7\over 4}\right)(\log n+\log 2)+\frac{1}{2}\log n\\
&\qquad\qquad-{n-o(n)+1\over 2}\log n+O(1)\\
&= -{n\over 2}\log n-\log\gamma - O(n).
\end{align*}
Similarly, from \eqref{eq:AsympDigamma} and \eqref{eq:AsympPolygamma} we get
\begin{align*}
\Var Y_{\mu,n}&=-{1\over 4}+\frac12\log\left({2n-o(n)+3 \over n-o(n)+3 }\right)+O\left({1\over n}\right)={1\over 2}\log 2-{1\over 4}+O\left({1\over n}\right),
\end{align*}
and also 
\begin{align*}
\left|c_m(Y_{\mu,n})\right|&\leq {6(m-1)!\over n^{m-2}}
\end{align*}
for all $m\geq 3$ and $n\geq 3$.

\subsubsection{The case when $n$ is fixed and $\mu\rightarrow \infty$}

From Proposition \ref{lem:CumulantBounds} and \eqref{eq:AsympDigamma} we conclude that
\begin{align*}
\Ex Y_{\mu,n}&=-\log\gamma+\left({\mu\over 2}+{7\over 4}\right)\log \mu-{\mu+1\over 2}\log \mu-{1\over 4}\log \mu+O(1)
\\&= \log \mu-\log\gamma+O(1).
\end{align*}
Similarly, using \eqref{eq:AsympDigamma}, \eqref{eq:AsympPolygamma} and a Taylor expansion of the logarithm we get
\begin{align*}
\Var Y_{\mu,n}&={3-n\over 2(n+\mu)}+\frac12\log(n+\mu)-\frac12\log(\mu+3)+{3\over 4\mu}+O\left({1\over \mu^2}\right)\\
&={3\over 4\mu}+O\left({1\over \mu^2}\right),
\end{align*}
and also
\begin{align*}
\left|c_m(Y_{\mu,n})\right|&\leq {7n+16\over 8}\,{(m-1)!\over \mu^{m-2}}
\end{align*}
for all $m\geq 3$ and $n\geq 3$.

\subsubsection{The case when $\mu\rightarrow \infty$ and $n =\mu^{\alpha}$, $\alpha \in (0,1)$}

Applying Proposition \ref{lem:CumulantBounds}, equality \eqref{eq:AsympDigamma} and a Taylor expansion of the logarithm we obtain
\begin{align*}
\Ex Y_{\mu,n}&=-{\alpha\over 2}\mu^{\alpha}\log \mu-\mu^{\alpha}\log\sqrt{2\pi}-\log\gamma +\left({\mu\over 2}+{7\over 4}\right)\log(\mu^{\alpha}+\mu)\\
&\qquad\qquad+{\alpha\over 2}\log\mu-{\mu+1\over 2}\log(\mu+3)-{1\over4}\log\left({\mu\over 2}+2\right)+O(1)\\
&= -{\alpha\over 2}\mu^{\alpha}\log \mu-{\mu^{\alpha}\over 2}(\log\pi-1+\log 2)-\log\gamma +{1+\alpha\over 2}\log\mu+{\mu\over 2}\sum\limits_{i=2}^{\infty}{(-1)^{i+1}\over i!\mu^{i(1-\alpha)}}\\
&=-{\alpha\over 2}\mu^{\alpha}\log \mu-\log\gamma+O(\mu^{\alpha}).
\end{align*}
Analogously, using \eqref{eq:AsympDigamma}, \eqref{eq:AsympPolygamma} and a Taylor expansion of the logarithm as well as of the function $1/x$ we get
\begin{align*}
\Var Y_{\mu,n}&={3-\mu^{\alpha}\over 2(\mu+\mu^\alpha)}+{2\mu^{\alpha}\over (\mu+\mu^\alpha)^2}+{1\over 2}\log(\mu+\mu^\alpha)+{1\over 2}-{1\over 2}\log(\mu+3)+{1\over 4(\mu+3)}\\
&\qquad\qquad-{\mu+2\over 2(\mu+3)}-{\mu+2\over 4(\mu+3)^2}+{1\over 4(\mu+3)}+O\left({1\over (\mu+ \mu^{\alpha})^2}\right)\\
&={3\over 4\mu}+{1\over 2}\sum\limits_{i=2}^{\infty}\left(1-{1\over i!}\right){(-1)^i\over \mu^{i(1-\alpha)}}+ O\left(1\over \mu^{2-\alpha}\right).
\end{align*}
Thus, for $\alpha<{1\over 2}$ we have
\begin{align*}
\Var Y_{\mu,n}&={3\over 4\mu}+ O\left(1\over \mu^{2-2\alpha}\right),
\end{align*}
for $\alpha={1\over 2}$ we get
\begin{align*}
\Var Y_{\mu,n}&={1\over \mu}+ O\left(1\over \mu^{3/2}\right),
\end{align*}
and for $\alpha>{1\over 2}$ we obtain
\begin{align*}
\Var Y_{\mu,n}&={1\over 4\mu^{2(1-\alpha)}}+ O\left({1\over \mu}+{1\over \mu^{3(1-\alpha)}}\right).
\end{align*}
Finally, we conclude that
\begin{align*}
\left|c_m(Y_{\mu,n})\right|&\leq {6(m-1)!\over \mu^{m-2}}.
\end{align*}
for all $m\geq 3$ and $n\geq 3$.

\subsection{Limit theorems for weighted random simplices}\label{subsec:MainResults}

Having derived cumulant bounds for the random variables $Y_{\mu, n}$ we are now in the position to prove a number of probabilistic limit theorems for the logarithmic volume of the weighted random simplices $Z_\mu$ in a Poisson-Delaunay tessellation. For this we use the following lemma. It summarizes results from \cite{DE13b} and \cite{SS91} in a simplified form (see also \cite{ERS,GT18a,GT18b}). As above, we shall write $c_m(X)$, $m\in\mathbb{N}$, for the $m$-th cumulant of a random variable $X$ with $\Ex|X|^m<\infty$. Moreover, by $\Phi(\,\cdot\,)$ we denote the distribution function of a standard Gaussian random variable. 

In what follows we will obtain a concentration inequality (see Theorem \ref{thm:Main} (i)), Berry-Esseen bounds (see Theorem \ref{thm:Main} (iv)) and investigate moderate deviations (see Theorem \ref{thm:Main} (ii) and (iii)). Let us recall here for convenience the definition of moderate deviations principle for a sequence of random variables. Given a sequence $(\nu_n)_{n\in\mathbb{N}}$ of probability measures on a topological space $E$, we say that it fulfils a large deviations principle with speed $a_n$ and (good) rate function $I:E\rightarrow [0;\infty]$, if $I$ is lower semi-continuous and has compact level sets, and if for every Borel set $B\subseteq E$ we have
\[
-\inf_{x\in \inter(B)} I(x)\leq \liminf_{n\rightarrow \infty}a_n^{-1}\log\nu_n(B)\leq \limsup_{n\rightarrow\infty}a_n^{-1}\log \nu_n(B)\leq -\inf_{x\in \cl(B)} I(x),
\]
where $\inter(B)$ and $\cl(B)$ stand for the interior and the closure of $B$, respectively. We say that a sequence $(X_n)_{n\in\mathbb{N}}$ of random elements in $E$ satisfies a large deviations principle if the sequence of their distributions does. Moreover, if the rescaling $a_n$ lies between that of a law of large numbers and that of a distributional (often a central) limit theorem, we will say that a sequence $(X_n)_{n\in\mathbb{N}}$ satisfies a moderate deviations principle with speed $a_n$ and rate function $I$, cf.\ \cite{DZ}.

 \begin{lemma}\label{lem:ConsequenceCumBounds}
 	Let $(X_n)_{n\in\N}$ be a sequence of random variables with $\Ex[X_n] = 0$ and $\Var[X_n] = 1$ for all $n\in\N$. Suppose that, for all $m\in \N$, $m\ge 3$ and sufficiently large $n$,
 	\begin{align}\label{eq:CumBoundAbstract}
 	|c_m(X_n)| \leq \frac{(m!)^{1 + \delta}}{(\Delta_n)^{m-2}}
 	\end{align}
 	with a constant $\delta \in [0,\infty)$ not depending on $n$ and constants $\Delta_n \in (0,\infty)$ that may depend on $n$.  Then the following assertions are true.
 	\begin{itemize}
 		\item[(i)] For all $y\in[0,\infty)$ and sufficiently large $n$,
 		\begin{align*}
 		\Pr(|X_n| \geq y) \leq 2 \exp\left(-{y^2\over 2\big[1+(y/\Delta_n^{1/(1+2\delta)})^{1+2\delta\over 1+\delta}\big]}\right). 
 		\end{align*}
 		\item[(ii)] There exist constants $c_1,c_2 \in (0,\infty)$ only depending on $\delta$ such that for sufficiently large $n$ and $0\leq y\leq c_1\,(\Delta_n)^{\frac{1}{1 + 2\delta}}$,
 		\begin{align*}
 		\left|\log \frac{\Pr(X_n \geq y)}{1 - \Phi(y)}\right| &\leq c_2\, (1 + y^3)\, (\Delta_n)^{-\frac{1}{(1+2\delta)}} \quad and\\
 		\left|\log \frac{\Pr(X_n \leq -y)}{\Phi(-y)}\right| &\leq c_2\, (1 + y^3)\,(\Delta_n)^{-\frac{1}{(1+2\delta)}}.
 		\end{align*}
 		\item[(iii)] Let $(a_n)_{n \in\N}$ be a sequence of positive real numbers such that
 		\begin{align*}
 		\lim\limits_{n \rightarrow \infty} a_n = \infty \quad \text{and}\quad \lim\limits_{n \rightarrow \infty} a_n\, \Delta_n^{-\frac{1}{1 + 2\delta}} = 0.
 		\end{align*}
 		Then $(a_n^{-1} X_n)_{n\in\N}$ satisfies a moderate deviations principle on $\mathbb{R}$ with speed  $a_n^2$ and rate function $I(x) = \frac{x^2}{2}$.
 		\item[(iv)] One has the Berry-Esseen bound
 		\begin{align*}
 		\sup\limits_{y\in \mathbb{R}} |\Pr(X_n \leq y) - \Phi(y)| \leq c\, (\Delta_n)^{-1/(1+2\delta)}
 		\end{align*}
 		with a constant $c\in (0,\infty)$ that only depends on $\delta$.
 	\end{itemize} 
 \end{lemma}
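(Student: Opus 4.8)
The plan is to observe that hypothesis \eqref{eq:CumBoundAbstract} is precisely the classical Statulevi\v{c}ius cumulant condition, so that parts (i)--(iv) are direct specializations of the general large-deviation theory in \cite{SS91}, restated in the convenient form of \cite{DE13b} (see also \cite{ERS,GT18a,GT18b}). Concretely, writing $\gamma:=\delta$ and taking the scaling parameter to be $\Delta:=\Delta_n$, the bound $|c_m(X_n)|\le (m!)^{1+\delta}/\Delta_n^{m-2}$ valid for all $m\ge 3$ is exactly the condition denoted $(S_\delta)$ in that literature; together with $\Ex X_n=0$ and $\Var X_n=1$ this is the only input that the cited results require, and the ``effective scale'' through which $\Delta_n$ enters all conclusions is $\Delta_n^{1/(1+2\delta)}$, which explains the exponent $\frac1{1+2\delta}$ appearing throughout (i)--(iv).

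With this identification, parts (iv), (ii) and (i) are quoted directly. For (iv), the Berry--Esseen bound $\sup_y|\Pr(X_n\le y)-\Phi(y)|\le c\,\Delta_n^{-1/(1+2\delta)}$ is the quantitative CLT following from $(S_\delta)$ (the relevant lemma of \cite{SS91}, cf.\ also \cite{DE13b}). For (ii), the two-sided Cram\'er--Petrov relations on the range $0\le y\le c_1\Delta_n^{1/(1+2\delta)}$ are obtained from the sharp large-deviation relation in \cite{SS91} — in which the relative error $\Pr(X_n\ge y)/(1-\Phi(y))$ is controlled by the Cram\'er series evaluated at $y/\Delta_n^{1/(1+2\delta)}$ — by taking logarithms and bounding the Cram\'er-series term and the multiplicative correction crudely by $O((1+y^3)\Delta_n^{-1/(1+2\delta)})$; applying the same to the left tail (directly, or via $-X_n$, which satisfies the same cumulant bound since $c_m(-X_n)=(-1)^mc_m(X_n)$) gives the second inequality. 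For (i), the exponential concentration inequality is the standard Bernstein-type estimate: one bounds the cumulant generating function $\log\Ex e^{tX_n}$ for $|t|<c\,\Delta_n^{1/(1+2\delta)}$ by summing the cumulant series and using \eqref{eq:CumBoundAbstract}, then applies Markov's inequality and optimizes in $t$; the optimization is what produces the exponent $\frac{1+2\delta}{1+\delta}$ in $(y/\Delta_n^{1/(1+2\delta)})^{(1+2\delta)/(1+\delta)}$. This is exactly the form stated in \cite{SS91} and reproduced in \cite{ERS,GT18a,GT18b}.

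It remains to assemble part (iii) from part (ii). Fix $x\in(0,\infty)$. The assumptions $a_n\to\infty$ and $a_n\Delta_n^{-1/(1+2\delta)}\to 0$ force $\Delta_n\to\infty$ and $a_nx\le c_1\Delta_n^{1/(1+2\delta)}$ for all large $n$, so part (ii) applies with $y=a_nx$ and yields
\[
\log\Pr(X_n\ge a_nx)=\log\big(1-\Phi(a_nx)\big)+\theta_n,\qquad |\theta_n|\le c_2\,(1+a_n^3x^3)\,\Delta_n^{-1/(1+2\delta)}.
\]
Using $\log(1-\Phi(t))=-t^2/2-\log t+O(1)$ as $t\to\infty$, dividing by $a_n^2$, and noting that $a_n^{-2}\theta_n\le c_2\big(a_n^{-2}+x^3a_n\big)\Delta_n^{-1/(1+2\delta)}\to 0$, we get $a_n^{-2}\log\Pr(X_n\ge a_nx)\to -x^2/2$; the identical statement for $\Pr(X_n\le -a_nx)$ follows from the left-tail estimate in (ii) (or by symmetry). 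These half-line asymptotics for $(a_n^{-1}X_n)_n$, together with exponential tightness — an immediate consequence of part (i), since for fixed $L$ the quantity $a_nL/\Delta_n^{1/(1+2\delta)}\to 0$ and hence $a_n^{-2}\log\Pr(|X_n|\ge a_nL)\lesssim -L^2/2\to-\infty$ as $L\to\infty$, uniformly in large $n$ — and the continuity and convexity of $x\mapsto x^2/2$, give the full moderate deviations principle on $\R$ with speed $a_n^2$ and rate function $I(x)=x^2/2$ by the standard one-dimensional arguments (cf.\ \cite{DZ}).

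The only genuinely delicate point is bookkeeping rather than substance: tracking how the scale $\Delta_n$ of \eqref{eq:CumBoundAbstract} propagates into the conclusions of \cite{SS91} through $\Delta_n^{1/(1+2\delta)}$, and checking in (i) that the Bernstein optimization delivers exactly the exponent $\frac{1+2\delta}{1+\delta}$. Both are routine once $\log\Ex e^{tX_n}$ is controlled on $|t|<c\,\Delta_n^{1/(1+2\delta)}$, and no idea beyond \cite{SS91,DE13b} is needed; accordingly I expect the proof to consist of the above verification that \eqref{eq:CumBoundAbstract} meets the hypotheses of those works, together with the short assembly of (iii).
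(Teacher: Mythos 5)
Your proposal is correct and matches the paper's treatment: the paper gives no independent proof of this lemma, stating only that it ``summarizes results from \cite{DE13b} and \cite{SS91} in a simplified form (see also \cite{ERS,GT18a,GT18b})'', which is exactly the citation-based route you take. Your additional assembly of part (iii) from part (ii) together with the exponential tightness supplied by part (i) is sound and, if anything, more detailed than what the paper records.
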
 
 
 \begin{remark}\rm 
Lemma \ref{lem:ConsequenceCumBounds} (ii) is a simplified form of a Cram\'er-Petrov-type asymptotic expansion that follows from the cumulant bound \eqref{eq:CumBoundAbstract} together with \cite[Lemma 2.2 and 2.3]{SS91}. We leave the formulation of the more precise (and much more involved) statement that follows from these results to the reader.
 \end{remark}

We shall distinguish the following three regimes:
\begin{itemize}
\item[(R1)] $n\to\infty$ and $\mu\in(-2,\infty)$ fixed,
\item[(R2)] $n\to\infty$ and $\mu=n^\alpha$ for some $\alpha\in(0,1)$,
\item[(R3)] $n\to\infty$ and $n-\mu=o(n)$,
\item[(R4)] $n\to\infty$ and $\mu=\alpha n$ for some $\alpha\in(0,\infty)$.
\end{itemize}
Moreover, for $n\in\N$ we define the quantity
\begin{align*}
\epsilon_n := \begin{cases}
\sqrt{\log n} &: \mu\text{ fixed},\\
n^\alpha\sqrt{\log n} &: \mu=n^\alpha,\alpha\in(0,1),\\
n &: \mu=\alpha n,\alpha\in(0,\infty)\text{ or }n-\mu=o(n),
\end{cases}
\end{align*}
as well as the centred and normalized random random variables
	\[
	\tilde Y_{\mu, n}:=\frac{Y_{\mu, n}-\Ex Y_{\mu, n}}{\sqrt{\Var Y_{\mu, n}}}.
	\]
We are now ready to present our main result. Note that part (ii) of Theorem \ref{thm:Intro} from the introduction corresponds to the special choice $\mu=-1$. 

\begin{theorem}\label{thm:Main}
Suppose that $n$ and $\mu$ are such that we are in one of the regimes (R1), (R2), (R3) or (R4). Then the following assertions are true.
\begin{itemize}
\item[(i)] There exists a constant $c\in(0,\infty)$ such that for all $y\in[0,\infty)$ and large enough $n$ one has that
$$
\Pr(\widetilde{Y}_{\mu,n}\geq y) \leq 2\,\exp\Big(-{y^2\over 2+cy\epsilon_n^{-1}}\Big).
$$

\item[(ii)] There are constants $c_1,c_2\in(0,\infty)$ such that for large enough $n$ and $0\leq y\leq c_1\epsilon_n$ one has that
\begin{align*}
\Big|\log{\Pr(\widetilde{Y}_{\mu,n}\geq y)\over 1-\Phi(y)}\Big| \leq c_2\,(1+y^3)\,\epsilon_n^{-1},\\
\Big|\log{\Pr(\widetilde{Y}_{\mu,n}\leq -y)\over \Phi(-y)}\Big| \leq c_2\,(1+y^3)\,\epsilon_n^{-1}.
\end{align*}

\item[(iii)] Let $(a_n)_{n\in\N}$ be a sequence of positive real numbers such that
$$
\lim_{n\to\infty}a_n=\infty\qquad\text{and}\qquad \lim_{n\to\infty}a_n\,\epsilon_n^{-1} = 0.
$$
Then the sequence of random variables $(a_n^{-1}\widetilde{Y}_{\mu,n})_{n\in\N}$ satisfies a moderate deviations principle on $\R$ with speed $a_n^2$ and rate function $I(x)=x^2/2$.

\item[(iv)] There exists a constant $c\in(0,\infty)$ such that for all $n\geq 3$,
$$
\sup_{y\in\R}|\Pr(\widetilde{Y}_{\mu,n}\leq y)-\Phi(y)| \leq c\,\epsilon_n^{-1}.
$$
\end{itemize}
\end{theorem}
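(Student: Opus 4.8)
\textit{Proof proposal.} The plan is to deduce all four assertions from the general Lemma~\ref{lem:ConsequenceCumBounds}, applied to the centred and normalized variables $\widetilde{Y}_{\mu,n}$, once a cumulant bound of the shape \eqref{eq:CumBoundAbstract} has been established with $\delta=0$ and with $\Delta_n$ comparable to $\epsilon_n$ in each of the regimes (R1)--(R4). First I would observe that $\Ex\widetilde{Y}_{\mu,n}=0$ and $\Var\widetilde{Y}_{\mu,n}=1$ hold by construction; this makes sense because Proposition~\ref{lem:CumulantBounds} and its specializations in Section~\ref{subsec:CumSpecial} show that $v_n:=\Var Y_{\mu,n}>0$ for all large $n$ in every regime. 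Indeed $v_n$ diverges like $\tfrac12\log n$ in (R1) and like $\tfrac{1-\alpha}{2}\log n$ in (R2), and it converges to the constants $\tfrac12\log 2-\tfrac14$ in (R3) and $\tfrac12\log(1+\alpha^{-1})-\tfrac{1}{2(1+\alpha)}$ in (R4), both strictly positive (for (R4) one notes $\log(1+\alpha^{-1})-(1+\alpha)^{-1}=\tfrac{1}{2\alpha^2}+O(\alpha^{-3})$ as $\alpha\to\infty$, that it is clearly positive as $\alpha\to 0$, and that it has no interior zero). Since cumulants of order $m\ge 2$ are translation invariant and homogeneous of degree $m$, we have $c_m(\widetilde{Y}_{\mu,n})=c_m(Y_{\mu,n})/v_n^{m/2}$ for $m\ge 2$.

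Next I would use the cumulant estimates of Section~\ref{subsec:CumSpecial}, all of which have the form $|c_m(Y_{\mu,n})|\le C\,(m-1)!\,q_n^{-(m-2)}$ for $m\ge 3$ and $n\ge 3$, where $q_n=\mu+3$ in (R1), $q_n=n^\alpha$ in (R2), $q_n=n$ in (R3), $q_n=\alpha n$ in (R4), and $C$ is an absolute constant (or one depending only on the frozen parameter $\mu$, resp.\ $\alpha$). Combining this with the previous display gives
\[
|c_m(\widetilde{Y}_{\mu,n})|\le \frac{C(m-1)!}{q_n^{m-2}v_n^{m/2}}=\frac{C}{m\,v_n}\cdot\frac{m!}{(q_n\sqrt{v_n})^{m-2}}.
\]
Choosing $\Delta_n:=b\,q_n\sqrt{v_n}$ with a constant $b\in(0,1]$ small enough that $b^{m-2}\le m v_n/C$ for all $m\ge 3$ — which is possible since $v_n$ is bounded below by a positive constant uniformly in large $n$ — the right-hand side is at most $m!\,\Delta_n^{-(m-2)}$, so \eqref{eq:CumBoundAbstract} holds with $\delta=0$. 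Comparing with the definition of $\epsilon_n$ one checks that $\Delta_n$ and $\epsilon_n$ are comparable (each at most a fixed constant multiple of the other) in every regime: $q_n$ is of constant order and $\sqrt{v_n}$ of order $\sqrt{\log n}$ in (R1); $q_n=n^\alpha$ and $\sqrt{v_n}$ of order $\sqrt{\log n}$ in (R2); and $q_n$ of order $n$ with $\sqrt{v_n}$ of constant order in (R3) and (R4).

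It then remains to feed $\delta=0$ and $\Delta_n=b\,q_n\sqrt{v_n}$ into Lemma~\ref{lem:ConsequenceCumBounds}. Since $(1+2\delta)/(1+\delta)=1$ and $\Delta_n^{1/(1+2\delta)}=\Delta_n$, part (i) of the lemma yields assertion (i) of the theorem after absorbing the proportionality constant into $c$; parts (ii) and (iv) yield assertions (ii) and (iv) up to renaming constants, and for (iv) one additionally notes that, for the finitely many $n\ge 3$ not covered by the phrase ``sufficiently large $n$'', the left-hand side is trivially at most $1$ while $\epsilon_n^{-1}$ is bounded below, so enlarging $c$ closes the gap; and part (iii) yields assertion (iii), because the hypothesis $a_n\Delta_n^{-1/(1+2\delta)}=a_n\Delta_n^{-1}\to0$ is equivalent, up to the fixed constant relating $\Delta_n$ and $\epsilon_n$, to $a_n\epsilon_n^{-1}\to0$.

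The step I expect to be the main obstacle is the uniformization carried out in the second paragraph: one must check that the $O(1)$-errors in $\Var Y_{\mu,n}$ and the constants appearing in the cumulant bounds of Section~\ref{subsec:CumSpecial} are genuinely absolute (or depend only on the frozen parameter), and — most delicately in regimes (R3) and (R4), where $v_n$ tends to a constant that may be smaller than $1$, so that the factor $v_n^{-m/2}$ grows geometrically in $m$ — that this loss is absorbed into $\Delta_n^{-(m-2)}$ by shrinking $b$, uniformly over all $m\ge 3$. Establishing the strict positivity of the limiting variances in (R3) and (R4) is a prerequisite for the whole argument, since otherwise $\widetilde{Y}_{\mu,n}$ would not even be well defined.
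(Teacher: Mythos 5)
Your proposal is correct and follows essentially the same route as the paper: combine the regime-specific cumulant bounds of Section~\ref{subsec:CumSpecial} with the variance asymptotics to verify condition \eqref{eq:CumBoundAbstract} for $\widetilde{Y}_{\mu,n}$ with $\delta=0$ and $\Delta_n$ a constant multiple of $\epsilon_n$, then invoke Lemma~\ref{lem:ConsequenceCumBounds}. The only differences are cosmetic bookkeeping of constants (the paper absorbs an $A^m$ factor into $(A\max\{1,A^2\}/\epsilon_n)^{m-2}$ where you instead shrink $b$ in $\Delta_n=b\,q_n\sqrt{v_n}$) plus your explicit — and correct — check that the limiting variances in (R3) and (R4) are strictly positive.
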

\begin{proof}
	From the estimates presented in the previous section we obtain
	\[
	\left|c_m\left(\tilde Y_{\mu, n}\right)\right|=\frac{\left|c_m\left(Y_{\mu, n}\right)\right|}{\left(\Var Y_{\mu, n}\right)^{m/2}}\leq A^m{m!\over\epsilon_n^{m-2}}\leq\bigg({A\max\{1,A^2\}\over\epsilon_n}\bigg)^{m-2}m!,
	\]
	for all $n\geq 3$. Here, $A\in(0,\infty)$ is an absolute constant, except for the cases where $\mu=n^\alpha$ or $\mu=\alpha n$, where it depends additionally on the choice of $\alpha$.
	Thus $\tilde Y_{\mu, n}$ satisfies condition \eqref{eq:CumBoundAbstract} with $\Delta_n=\epsilon_n/(A\max\{1,A^2\})$ and $\delta=0$. This completes the proof.
\end{proof}

\begin{remark}\rm 
The proof of Theorem \ref{thm:Main} shows that the constants $c$ in part (i) and (iv) and the constants $c_1,c_2$ in part (ii) are in fact absolute constants in regimes (R1) and (R3) and that they only depend on the additional parameter $\alpha$ in regimes (R2) and (R4).
\end{remark}

\section{Mod-$\phi$ convergence and large deviations principle}\label{sec:ModPhi}

In this final section we investigate mod-$\phi$ convergence and the large deviations behaviour of the logarithmic volume of the random simplex $Z_{\mu}$. The notion of mod-$\phi$ convergence has been recently introduced and studied in \cite{DKN15, JKN11}. It is a powerful tool which leads to a whole collection of limit theorems including an extended version of the central limit theorem, a local limit theorem, precise moderate and large deviations and Cram\'er-Petrov type asymptotic expansions. For more references and a survey of the topic we refer the reader to \cite{ModPhiBook}. We remark that \textit{some} of the results established in the previous section for fixed $\mu$ will also follow once we have established mod-$\phi$ convergence.

The main idea behind the concept of mod-$\phi$ convergence of a sequence of random variables is to look for a suitable renormalization of the moment generating functions (considered on the complex plane $\mathbb{C}$) of these random variables. There are a several versions of the definition of mod-$\phi$ convergence, we will consider the one from \cite[Definition 1.1]{ModPhiBook}. Let $(X_n)_{n\in\mathbb{N}}$ be a sequence of real-valued random variables, and let us denote by $\varphi_n(z)=\Ex[e^{zX_n}]$ their moment generating functions, which are assumed to exist in a strip 
\[
S_{(a,b)}:=\left\{z\in\C\colon a<\Re z<b\right\},
\]
where $a<0<b$ are extended real numbers. Assume there exists a non-constant infinitely divisible distribution $\phi$ with moment generating function $\int_{\R}e^{zx}\phi(\dd x)=\exp(\eta(z))$, which is well defined on $S_{(a,b)}$, and an analytic function $\psi(z)$ which does not vanish on the real part of $S_{(a,b)}$, such that 
\[
\exp\left(-w_n\eta(z)\right)\varphi_n(z)\rightarrow \psi(z),\qquad n\to\infty,
\]
locally uniformly in $z\in S_{(a,b)}$ for some sequence $(w_n)_{n\in\mathbb{N}}$ converging to infinity. Then we say that the sequence $(X_n)_{n\in\mathbb{N}}$ converges in the mod-$\phi$ sense on $S_{(a,b)}$ with parameters $(w_n)_{n\in\mathbb{N}}$ and limiting function $\psi$. Especially, if $\eta(z)=z^2/2$ is the Gaussian exponent, one speaks about mod-Gaussian convergence.

Mod-$\phi$ convergence for the log-volume of random simplices generated by random vectors distributed according to the Gaussian distribution, the beta distribution, the beta' distribution or the uniform distribution on the sphere was recently studied in \cite{EKGammaMoments,GKT17}. We remark that although \cite{EKGammaMoments} studies very general models with so-called gamma type moments, our random variables do not precisely fit into this framework.

\subsection{The Barnes $G$-function}

Our result about the mod-$\phi$ convergence of the random variables $Y_{\mu,n}$ defined in \eqref{eq:DefYmun} involves the so-called Barnes $G$-function. The Barnes $G$-function is an entire function of one complex argument $z$, which can be defined as a solution of functional equation 
\[
G(z+1)=\Gamma(z)G(z).
\]
Using induction one can deduce that for any integer $n$ one has that 
\begin{equation}\label{eq:BarnesFunct}
\prod\limits_{k=1}^n\Gamma(k+z)={G(z+n+1)\over G(z+1)},\qquad z\neq -1,-2,\ldots.
\end{equation}
In what follows we will need the following lemma, which was proved in \cite[Lemma 4.1]{GKT17}.

\begin{lemma}\label{lm:BarnesFuncAsymp}
Let $|z|\rightarrow \infty$ and let $a=a(z)\in\mathbb{C}$ be such that ${a\over z}\rightarrow \infty$. Let also $|\arg z|, |\arg (z+a)|<\pi-\epsilon$ for some $\epsilon >0$. Then
\[
\log G(z+a+1)-\log G(z+1)=a\left(z\log z-z+\log\sqrt{2\pi}\right)+{1\over 2}a^2\log z+O\left({|a|^3+1\over z}\right).
\]
\end{lemma}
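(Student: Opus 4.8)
\textbf{Proof proposal for Lemma \ref{lm:BarnesFuncAsymp}.}

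The plan is to reduce the statement to a single application of the asymptotic expansion of the classical log-Gamma function via a telescoping argument, rather than invoking the known Barnes-$G$ asymptotics directly. First I would use the functional equation $G(z+1)=\Gamma(z)G(z)$ iteratively. If $a$ were a positive integer this gives $\log G(z+a+1)-\log G(z+1)=\sum_{k=1}^{a}\log\Gamma(z+k)$, and the general case is handled by the same bookkeeping after isolating the integer part of $a$; alternatively, and more cleanly for complex $a$, I would differentiate: since $\frac{\dd}{\dd z}\log G(z+1)=\frac12\log(2\pi)+\frac12-z+z\psi(z)$ (a standard identity for the Barnes $G$-function, obtainable from its Weierstrass product), one gets
\[
\log G(z+a+1)-\log G(z+1)=\int_0^a\Big(\tfrac12\log(2\pi)+\tfrac12-(z+t)+(z+t)\psi(z+t)\Big)\,\dd t.
\]

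Next I would insert the asymptotic expansion \eqref{eq:AsympDigamma}, namely $\psi(w)=\log w-\frac{1}{2w}+O(1/w^2)$, valid uniformly for $|\arg w|<\pi-\epsilon$, into the integrand with $w=z+t$, $t\in[0,a]$. This turns $(z+t)\psi(z+t)$ into $(z+t)\log(z+t)-\frac12+O(1/(z+t))$, so the bracket becomes $\frac12\log(2\pi)-(z+t)+(z+t)\log(z+t)+O(1/(z+t))$. Then I would evaluate the three elementary integrals: $\int_0^a(z+t)\log(z+t)\,\dd t$, $\int_0^a(z+t)\,\dd t$, and $\int_0^a\frac12\log(2\pi)\,\dd t=\frac a2\log(2\pi)$, and then expand $\log(z+t)=\log z+\log(1+t/z)=\log z+O(t/z)$ for $t\in[0,a]$, using the hypothesis to control the remainder. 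Collecting the leading terms should produce exactly $a(z\log z-z+\log\sqrt{2\pi})+\frac12 a^2\log z$, with the error terms assembling into $O((|a|^3+1)/z)$: the $\frac12 a^2\log z$ term comes from $\int_0^a t\log z\,\dd t$ in the expansion of $(z+t)\log(z+t)$, and the cubic-in-$a$ error arises from the second-order term $O(t^2/z)$ in $\log(1+t/z)$ integrated against the prefactor, plus the $O(1/(z+t))$ term integrated over a range of length $|a|$.

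The main obstacle, and the point requiring care, is the uniformity of the error estimates over the segment from $z$ to $z+a$ while $a/z\to\infty$. One must ensure that $|z+t|$ stays bounded below by a positive multiple of $|z|$ for all $t\in[0,a]$ (this is where the sector conditions $|\arg z|,|\arg(z+a)|<\pi-\epsilon$ enter: convexity of the sector keeps the whole segment inside it, away from the negative real axis), so that $\log(1+t/z)$ and $1/(z+t)$ admit the needed bounds; and one must track that the accumulated error from integrating $O(t^2/|z|)$ over $t\in[0,|a|]$ is genuinely $O(|a|^3/|z|)$ rather than something larger. Given that \cite[Lemma 4.1]{GKT17} already records exactly this statement, I would, in the write-up, simply cite that reference; the sketch above indicates the route one would follow to reprove it from \eqref{eq:AsympDigamma} and the functional equation \eqref{eq:BarnesFunct} if a self-contained argument were desired.
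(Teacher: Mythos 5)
The paper does not prove this lemma at all: it simply records the statement and cites \cite[Lemma 4.1]{GKT17}, which is exactly what you propose to do in the write-up, so on that level the two ``proofs'' coincide. Your supplementary sketch is essentially the right reconstruction of the argument behind that reference: the identity $\frac{\dd}{\dd z}\log G(z+1)=\frac12\log(2\pi)+\frac12-z+z\psi(z)$ is correct, and inserting $\psi(w)=\log w-\frac{1}{2w}+O(1/w^{2})$ and integrating does produce $a(z\log z-z+\log\sqrt{2\pi})+\frac12 a^{2}\log z$ with an $O((|a|^{3}+1)/z)$ remainder. Two caveats, though. First, the hypothesis as printed, $a/z\to\infty$, is a typo for $a/z\to 0$ (this is how the lemma is stated and used in \cite{GKT17}, and it is the regime in which the paper applies it, with $a=z_{\mathrm{cplx}}/2$ bounded and the Barnes arguments of order $n$); under $a/z\to\infty$ your expansion $\log(1+t/z)=\log z+O(t/z)$ is not valid and the displayed ``main terms'' are in fact absorbed by the error term, so the statement would be contentless. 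Second, the sector $\{|\arg w|<\pi-\epsilon\}$ is \emph{not} convex for small $\epsilon$, so ``convexity of the sector keeps the whole segment inside it'' is not a correct justification in general; it is only the condition $a/z\to 0$ that keeps the segment from $z$ to $z+a$ uniformly away from the branch cut and makes $|z+t|\asymp|z|$ throughout. With those two corrections your sketch goes through.
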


\subsection{Mod-$\phi$ convergence of $Y_{\mu,n}$ for fixed $\mu$}

As before we will consider the random variables
$$
Y_{\mu,n}:=\log V_n(Z_{\mu}),\qquad \mu\in(-2,\infty),
$$
and investigate the mod-$\phi$ convergence for the sequence $(Y_{\mu,n})_{n\in\mathbb{N}}$, where $\mu$ is assumed to be fixed. It should be pointed out that other regimes for $\mu$ could also be considered using the same approach. However, we decided to restrict ourselves to the case of fixed $\mu$ since this includes the most prominent examples, namely the typical and the typical volume-weighted Delaunay simplex.

\begin{theorem}\label{thm:ModPhi}
Let $\mu\in(-2,\infty)$ be fixed and define
\[
m_n:=\log\left( {4\Gamma\left(\frac{n}{2}\right)\over \gamma\,n!\pi^{{n-1\over 2}}}\right)+\left({\mu\over 2}+{13\over 4}\right)\log\left({n\over 2}\right)-{\mu+n+1\over 2}.
\] 
Then, as $n\to\infty$, the sequence of random variables $(Y_{\mu,n}-m_n)_{n\in\N}$ converges in the mod-Gaussian sense on $S_{(-\mu-3,\infty)}$ (meaning that $\eta(z)={1\over 2}z^2$) with parameters $w_n={1\over 2}\log\left({n\over 2}\right)$ and limiting function 
\begin{equation}\label{eq:DefPsi}
\psi(z)={G\left({3+\mu\over 2}\right)G\left(2+{\mu\over 2}\right)\over G\left({3+\mu+z\over 2}\right) G\left(2+{\mu+z\over 2}\right)}.
\end{equation}
\end{theorem}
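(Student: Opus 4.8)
The plan is to compute the renormalized moment generating function of $Y_{\mu,n}$ directly from the moment formula \eqref{eq:Moments}, identify the Gaussian exponent that must be subtracted, and show the remainder converges to $\psi(z)$ using the asymptotics of the Barnes $G$-function from Lemma \ref{lm:BarnesFuncAsymp}. First I would write $\varphi_n(z):=\Ex[e^{zY_{\mu,n}}]=\Ex V_n(Z_\mu)^z$, which is finite precisely for $\Re z>-\mu-2$; the shift to the strip $S_{(-\mu-3,\infty)}$ will come out of the analysis since the poles of the ratio of $\Gamma$-functions push the left endpoint down by one more unit (the factor $\Gamma(n+\mu+1+z)/\Gamma(\tfrac{n+\mu}{2}+1+\tfrac z2)^{n+1}$ and $\Gamma(\tfrac{i+\mu}{2}+1+\tfrac z2)$ combine via the Barnes identity \eqref{eq:BarnesFunct} into an expression analytic for $\Re z>-\mu-3$). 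So the first concrete step is to rewrite the product $\prod_{i=1}^n\Gamma(\tfrac{i+\mu}{2}+1+\tfrac s2)$ appearing in \eqref{eq:Moments} using \eqref{eq:BarnesFunct} as $G(\tfrac{n+\mu}{2}+2+\tfrac s2)/G(\tfrac{\mu}{2}+2+\tfrac s2)$ — but note the shift by $\tfrac12$ in each step means we actually get two Barnes factors, one for even and one for odd $i$, namely a product over $\Gamma(\tfrac{3+\mu}{2}+\tfrac s2+j)$ and over $\Gamma(2+\tfrac\mu2+\tfrac s2+j)$; this is exactly why $\psi$ in \eqref{eq:DefPsi} is a product of two $G$-ratios.

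Next I would take $\log\varphi_n(z)$, substitute $s=z$ into \eqref{eq:Moments}, and organize the terms into three groups: (a) the purely linear-in-$n$ and deterministic prefactor $[\Gamma(\tfrac n2+1)/(\gamma\pi^{n/2}n!)]^z$ together with the $\Gamma\big(\tfrac{(n+1)(n+\mu)}{2}+1+\tfrac{n+1}{2}z\big)/\Gamma\big(\tfrac{n(n+\mu+1)}{2}+\tfrac n2 z\big)$ ratio, which I would expand with Stirling's formula \eqref{eq:AsympGamma}; (b) the factor $\Gamma(n+\mu+1+z)/\Gamma(\tfrac{n+\mu}{2}+1+\tfrac z2)^{n+1}$, also via \eqref{eq:AsympGamma}; and (c) the Barnes ratio $G(\tfrac{3+\mu+z}{2})^{-1}G(2+\tfrac{\mu+z}{2})^{-1}\cdot G(\tfrac{3+\mu}{2})G(2+\tfrac\mu2)$ combined with the pieces coming from the finite product. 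For groups (a) and (b) the key is that after the $z$-linear piece (which gives $m_n$) and the $\tfrac12 z^2 w_n$ piece (which gives the mod-Gaussian exponent with $w_n=\tfrac12\log(n/2)$), everything else is $O(1/n)$ by the next term in Stirling; for group (c) I would invoke Lemma \ref{lm:BarnesFuncAsymp} with $z$ there of order $n$ and $a$ bounded (since $z$ is fixed on compacts), which shows the would-be-divergent part of the Barnes terms is again absorbed into the linear and quadratic pieces, leaving the clean ratio $\psi(z)$ plus an $O\big((|z|^3+1)/n\big)$ error. Collecting the coefficient of $z$ gives $m_n$ — this is where the specific constants $\tfrac\mu2+\tfrac{13}{4}$, the $-\tfrac{\mu+n+1}{2}$, and the prefactor $4\Gamma(n/2)/(\gamma n!\pi^{(n-1)/2})$ must all fall out correctly — and collecting the coefficient of $z^2$ gives exactly $\tfrac12\log(n/2)=w_n$, so $\eta(z)=\tfrac12 z^2$. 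Then $\exp(-w_n\eta(z))\varphi_n(z)e^{-zm_n}\to\psi(z)$, and since all error terms are $O(1/n)$ uniformly on compact subsets of the strip, the convergence is locally uniform, as required by the definition of mod-$\phi$ convergence.

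The main obstacle I expect is the bookkeeping of the half-integer shifts: because consecutive terms in $\prod_{i=1}^n\Gamma(\tfrac{i+\mu}{2}+1+\tfrac z2)$ differ by $\tfrac12$, not $1$, the product does not telescope into a single Barnes $G$-factor but into two, and one must be careful about the parity of $n$ and the precise endpoints $\tfrac{3+\mu}{2}$ versus $2+\tfrac\mu2$. Any off-by-$\tfrac12$ here propagates into $m_n$ and into $\psi$. A parallel (milder) bookkeeping difficulty is making sure the $\tfrac n2\log n$-type divergent terms from the Stirling expansion of groups (a) and (b) exactly cancel with those from the Barnes asymptotics in group (c) so that only the advertised $\tfrac12\log(n/2)$ survives as the mod-Gaussian parameter — this is a one-line consistency check at the end but depends on every prior constant being exact. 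Finally, one should note for the strip that $\psi(z)$ has poles at the zeros of $G(\tfrac{3+\mu+z}{2})$ and $G(2+\tfrac{\mu+z}{2})$, the largest of which is at $z=-\mu-3$, confirming that $S_{(-\mu-3,\infty)}$ is the natural maximal strip and that $\psi$ does not vanish on $(-\mu-3,\infty)\subset\R$ since $G$ is positive there.
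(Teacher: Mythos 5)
Your proposal follows essentially the same route as the paper: substitute $s=z$ into the moment formula \eqref{eq:Moments}, telescope the product $\prod_{i=1}^n\Gamma(\tfrac{i+\mu}{2}+1+\tfrac z2)$ into two Barnes $G$-ratios (splitting by parity of $i$, with the parity of $n$ entering the endpoints exactly as you anticipate), apply Lemma \ref{lm:BarnesFuncAsymp} to those, and handle the remaining gamma ratios by a refined Stirling expansion with $O(1/n)$ control of the remainder (the paper uses Binet's first formula for this, which is the precise form of your ``next term in Stirling''), then read off $m_n$, $w_n$ and $\psi$ from the linear, quadratic and constant parts. The decomposition, the key lemma, and the identified pitfalls all match the paper's argument, so the plan is correct.
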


\begin{remark}\rm 
The centring terms $m_n$ and the limiting function $\psi$ can be simplified further for $\mu=-1$ (corresponding to the typical Delaunay simplex) and $\mu=0$ (corresponding to the typical volume-weighted Delaunay simplex) using that $G(1)=G(2)=1$ and that $G(3/2)=A^{-3/2}\pi^{1/2}e^{1/8}2^{1/24}$, where $A\approx 1.2824\ldots$ is the Glaisher-Kinkelin constant.
\end{remark}

\begin{proof}[Proof of Theorem \ref{thm:ModPhi}]
Consider the moment generating function of the random variable $Y_{\mu,n}$. From the moment formula  \eqref{eq:Moments} we have
\begin{equation}\label{eq:MomentGenartingFct}
\log \Ex \left[e^{zY_{\mu,n}}\right] =z\log\left(\frac{d\Gamma\left(\frac{n}{2}\right)}{2\gamma\pi^{n/2}n!}\right)+S_n(z)+T_n(z),
\end{equation}
where
\begin{align*}
S_n(z):&=\log \prod\limits_{i=1}^n{\Gamma\left({i+\mu\over 2}+1+{z\over 2}\right)\over \Gamma\left({i+\mu\over 2}+1\right)},\\
T_n(z):&=\left(\log\Gamma(n+\mu+1+z)-\log\Gamma(n+\mu+1)\right)\\
&\quad-\left(\log\Gamma\left({n(n+\mu+1)\over 2}+n{z\over 2}\right)-\log\Gamma\left({n(n+\mu+1)\over 2}\right)\right)\\
&\quad-(n+1)\left(\log \Gamma\left({n+\mu\over 2}+1+{z\over 2}\right) -\log\Gamma\left({n+\mu\over 2}+1\right)\right)\\
&\quad+\left(\log\Gamma\left({(n+1)(n+\mu)\over 2}+1+(n+1){z\over 2}\right)-\log \Gamma\left({(n+1)(n+\mu)\over 2}+1\right)\right).
\end{align*}
Let us start by analysing the asymptotic behaviour of $S_n(z)$. Defining $c:= n\mod 2$ and using \eqref{eq:BarnesFunct} we can rewrite $S_n(z)$ in terms of the Barnes $G$-function:
\begin{align*}
S_n(z)&=\log {G\left({3+c+\mu+n\over 2}+{z\over 2}\right)G\left({3+\mu\over 2}\right)G\left(2+{\mu+n-c\over 2}+{z\over 2}\right)G\left(2+{\mu\over 2}\right)\over G\left({3+c+\mu+n\over 2}\right)G\left({3+\mu\over 2}+{z\over 2}\right)G\left(2+{\mu+n-c\over 2}\right)G\left(2+{\mu\over 2}+{z\over 2}\right)}\\
&=\log \psi(z)+ \log G\left({3+c+\mu+n\over 2}+{z\over 2}\right) - \log G\left({3+c+\mu+n\over 2}\right)\\
&\qquad\qquad+\log G\left(2+{\mu+n-c\over 2}+{z\over 2}\right)-\log G\left(2+{\mu+n-c\over 2}\right),
\end{align*}
where $\psi$ is given by \eqref{eq:DefPsi}.
Applying now Lemma \ref{lm:BarnesFuncAsymp} and a Taylor expansion of the logarithm we conclude that
\begin{align*}
S_n(z)&=\log \psi(z)+ {z\over 2}\left({1+c+\mu+n\over 2}\log\left( {n\over 2}\right)-{n\over 2}+\log\sqrt{2\pi}\right)+{z^2\over 8}\log \left( {n\over 2}\right)\\
& \qquad+{z\over 2}\left({2-c+\mu+n\over 2}\log\left( {n\over 2}\right)-{n\over 2}+\log\sqrt{2\pi}\right)+{z^2\over 8}\log\left( {n\over 2}\right)+O\left({|z|^3+1\over n}\right)\\
&=\log \psi(z)+ {z\over 2}\left(\left({3\over 2}+\mu+n\right)\log\left( {n\over 2}\right)-n+\log(2\pi)\right)\\
&\qquad+{z^2\over 4}\log\left( {n\over 2}\right)+O\left({|z|^3+1\over n}\right).
\end{align*}
In order to compute $T_n(z)$ we will use the classical first Binet's formula \cite[page 243]{WW} for the logarithm of the gamma function, saying that
\begin{equation*}
\log \Gamma(z)=\left(z-{1\over 2}\right)\log z-z+{1\over 2}\log(2\pi)+\int\limits_{0}^{\infty}{e^{-tz}\over t}\left({1\over 2}-{1\over t}+{1\over e^t-1}\right)\dd t,\quad \Re z\in(0,\infty),
\end{equation*}
together with the relation $\Gamma(1+z)=z\Gamma(z)$. This leads to
\begin{align*}
T_n(z)&=(n+\mu+z-{1\over 2})\log (n+\mu+z)-z-(n+\mu-{1\over 2})\log (n+\mu)\\
&\quad-{1\over 2}\left(n(n+\mu)+n-1+nz\right)(\log n +\log (n+\mu+1+z)-\log 2)\\
&\qquad+{nz\over 2}+{1\over 2}\left(d(n+\mu)+n-1\right)(\log n +\log (n+\mu+1)-\log 2)\\
&\quad-{1\over 2}\left((n+1)(n+\mu)-n-1+(n+1)z\right)(\log(n+\mu+z)-\log 2)\\
&\qquad+{(n+1)z\over 2}+{1\over 2}\left((n+1)(n+\mu)-n-1\right)(\log (n+\mu)-\log 2)\\
&\quad+{1\over 2}\left((n+1)(n+\mu)-1+(n+1)z\right)(\log (n+1)+\log(n+\mu+z)-\log 2)\\
&\qquad-{(n+1)z\over 2}-{1\over 2}\left((n+1)(n+\mu)-1\right)(\log (n+1)+\log(n+\mu)-\log 2)\\
&\quad-(n-1)(\log(n+\mu+z)-\log(n+\mu))+R_n(z),
\end{align*}
where $R_n(z)$ is given by 
\begin{align*}
R_n(z)&:=\int\limits_{0}^{\infty}{1\over t}\left({1\over 2}-{1\over t}+{1\over e^t-1}\right)\Big(e^{-(n+\mu)t}(e^{-zt}-1)-e^{-{n(n+\mu+1)t\over 2}}(e^{-{nzt\over 2}}-1)\\
&\qquad\qquad-(n+1)e^{-{(n+\mu)t\over 2}}(e^{-{zt\over 2}}-1)+e^{-(n+1)(n+\mu)t\over 2}(e^{-{z(n+1)t\over 2}}-1)\Big)\dd t.
\end{align*}
Using the inequality $|e^z-1|\leq |z|e^{|z|}$, which is valid for any $z\in \mathbb{C}$, and the fact that the function $t\mapsto {1\over t}\left({1\over 2}-{1\over t}+{1\over e^t-1}\right)$ is bounded (by $1/12$) for any $t\in\R$ it is easy to ensure that
\[
R_n(z)=O\left({|z|\over n}\right).
\]
This allows us to simplify further the expression for $T_n(z)$ and we arrive at
\begin{align*}
T_n(z)&={z\over 2}((n+1)\log(n+1)-n\log n+n(1+\log 2)-2\\
&\qquad\qquad+2\log(n+\mu+z)-n\log (n+\mu+1+z))\\
&\quad+{1\over 2}\left(n+1+2\mu\right)(\log(n+\mu+z)-\log(n+\mu))\\
&\quad-{1\over 2}\left(n(n+\mu+1)-1\right)(\log(n+\mu+1+z)-\log(n+\mu+1))+O\left({|z|\over n}\right).
\end{align*}
From a Taylor expansion of the logarithm  we deduce that
\begin{align*}
(n+1)\log(n+1)-n\log n&=\log n+1+O\left({1\over n}\right),\\
\log(k+z)-\log k&={z\over k}-{z^2\over k^2}+O\left({|z|^3\over k^3}\right),
\end{align*}
and, hence,
\begin{align*}
T_n(z)&={z\over 2}\big(n\log 2-1-\mu-(n-3)\log n\big)+O\left({1+|z|^3\over n}\right).
\end{align*}
Combining all this with \eqref{eq:MomentGenartingFct} we finally obtain
\begin{align}
\log \Ex \left[e^{zY_{\mu,n}}\right] &=z\left(\log\left( {4\Gamma\left(\frac{n}{2}\right)\over \gamma\,n!\pi^{{n-1\over 2}}}\right)+\left({\mu\over 2}+{13\over 4}\right)\log\left({n\over 2}\right)-{\mu+n+1\over 2}\right) \notag\\
&\qquad\qquad+\log \psi(z)+{z^2\over 4}\log \left({n\over 2}\right)+O\left({1+|z|^3\over n}\right) \label{eq:AsympMomentGenFct},
\end{align}
which completes the proof of Theorem \ref{thm:ModPhi} .
\end{proof}

\subsection{Large deviations principle for $Y_{\mu,n}$ for fixed $\mu$}

The purpose of this subsection is to derive large deviations principle for the sequence of random variables $Y_{\mu,n}$ when $\mu$ is fixed (recall the definition of a large deviations principle from the beginning of Section \ref{subsec:MainResults}). In particular, this covers the statement of part (iii) of Theorem \ref{thm:Intro} we presented in the introduction if we take $\mu=-1$.

\begin{theorem}\label{thm:LDP}
Let $\mu\in(-2,\infty)$ be fixed and put
\[
m_n:=-{n\over 2}\log n-{n\over 2}(\log\pi +1)+\left({\mu\over 2}+{9\over 4}\right)\log n -\log\gamma.
\] 
Then the sequence of random variables $(Y_{\mu,n}-m_n)_{n\in\N}$ satisfies a large deviations principle on $\R$ with speed ${1\over 2}\log\big({n\over 2}\big)$ and good rate function $I(x)={x^2\over 2}$.
\end{theorem}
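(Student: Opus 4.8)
The plan is to deduce the large deviations principle from the refined asymptotics of the cumulant generating function of $Y_{\mu,n}$ obtained in the proof of Theorem~\ref{thm:ModPhi}, namely \eqref{eq:AsympMomentGenFct}, by an application of the Gärtner--Ellis theorem (see \cite{DZ}). Write $w_n:=\tfrac12\log\bigl(\tfrac n2\bigr)$ for the proposed speed. The explicit form of the centring $m_n$ enters only once: an elementary simplification of $\log\bigl(4\Gamma(n/2)/(\gamma\,n!\,\pi^{(n-1)/2})\bigr)$ by Stirling's formula \eqref{eq:AsympGamma}--\eqref{eq:Stirling} shows that $m_n$ agrees with the coefficient of $z$ in \eqref{eq:AsympMomentGenFct} up to an error of order $O(1)=o(w_n)$, so that subtracting $m_n$ exactly kills the linear-in-$z$ part at the level of $w_n^{-1}\log(\cdot)$.

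First I would compute the limiting normalized cumulant generating function $\Lambda(\lambda):=\lim_{n\to\infty} w_n^{-1}\log\Ex[e^{\lambda(Y_{\mu,n}-m_n)}]$. For a fixed real $\lambda$ in the open strip $(-\mu-3,\infty)$ the expansion \eqref{eq:AsympMomentGenFct}, together with the matching of $m_n$ just described, gives
\[
\frac1{w_n}\log\Ex\bigl[e^{\lambda(Y_{\mu,n}-m_n)}\bigr]
=\frac{\log\psi(\lambda)}{w_n}+\frac{\lambda^2}{4w_n}\log\Bigl(\frac n2\Bigr)+\frac{O(1+|\lambda|^3)}{n\,w_n}\;\longrightarrow\;\frac{\lambda^2}{2},
\]
since $\psi$ is a fixed analytic function, finite and nonvanishing on the strip. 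For $\lambda\le-\mu-3$ one has instead $\Ex[e^{\lambda Y_{\mu,n}}]=\Ex V_n(Z_\mu)^{\lambda}=+\infty$ for every $n$ — the threshold $-\mu-3$ being visible in the factor $\Gamma(\tfrac{\mu+3}{2}+\tfrac\lambda2)$ of \eqref{eq:Moments} — whence $\Lambda(\lambda)=+\infty$. Altogether $\Lambda(\lambda)=\lambda^2/2$ for $\lambda>-\mu-3$ and $\Lambda(\lambda)=+\infty$ otherwise.

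Next I would verify the hypotheses of the Gärtner--Ellis theorem: the origin lies in the interior of $\{\Lambda<\infty\}=(-\mu-3,\infty)$, on which $\Lambda$ is finite and differentiable. This yields the upper bound of the large deviations principle at speed $w_n$ with rate function the Fenchel--Legendre transform $\Lambda^\ast(x)=\sup_{\lambda>-\mu-3}\bigl(\lambda x-\lambda^2/2\bigr)$, together with the lower bound at all exposed points. A direct computation gives $\Lambda^\ast(x)=x^2/2$ for every $x\ge-\mu-3$, the supremum being attained at the interior value $\lambda=x$; in particular this covers all of $[0,\infty)$ and a neighbourhood of the origin, and shows that every such $x$ is an exposed point. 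Combined with exponential tightness — immediate, e.g., from the Chernoff bound at $\lambda=\pm1$ and the finiteness of $\Lambda$ near $0$ — one obtains the large deviations principle with speed $w_n$ and rate function $I(x)=x^2/2$ on this range, which in particular contains the statement of Theorem~\ref{thm:Intro}~(iii).

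The main obstacle is the behaviour at the left edge of the strip. Because $\psi$ has a simple pole at $z=-\mu-3$ — the Barnes $G$-function in the denominator of \eqref{eq:DefPsi} vanishing there — the function $\Lambda$ is not essentially smooth: $\Lambda'(\lambda)\to-\mu-3$, not $-\infty$, as $\lambda\downarrow-\mu-3$, so the Gärtner--Ellis theorem on its own does not supply the lower bound for deviations far to the left. To complete the principle on $\mathbb{R}$ I would treat the region $\{Y_{\mu,n}-m_n\le -w_n y\}$ by hand: the Chernoff estimate $\Pr(Y_{\mu,n}-m_n\le -w_n y)\le \psi(-\lambda)\exp\bigl(-\lambda w_n y+\tfrac{\lambda^2}{2}w_n+o(w_n)\bigr)$, valid for $0<\lambda<\mu+3$ by \eqref{eq:AsympMomentGenFct}, optimised over the admissible range of $\lambda$, together with a matching lower bound obtained by an exponential change of measure that tilts towards the boundary $\lambda=\mu+3$; alternatively one may quote the precise large deviation estimates that accompany mod-$\phi$ convergence \cite{ModPhiBook,DKN15,JKN11}. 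All the remaining steps — the Stirling simplification of $m_n$, the differentiability of $\Lambda$ on its domain, exponential tightness, and the computation of $\Lambda^\ast$ — are routine.
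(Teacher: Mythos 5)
Your overall route --- extracting $\Lambda(\lambda)=\lim_{n\to\infty} w_n^{-1}\log\Ex e^{\lambda(Y_{\mu,n}-m_n)}$ from the expansion \eqref{eq:AsympMomentGenFct} and feeding it into the G\"artner--Ellis theorem --- is exactly the paper's, and on the interior of the domain of finiteness your computation agrees with the paper's. There are, however, two concrete problems.

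First, the centring match you assert in your opening paragraph fails. Carrying out the Stirling simplification of the coefficient of $z$ in \eqref{eq:AsympMomentGenFct} gives $-\tfrac n2\log n-\tfrac n2\log(2\pi)+(\tfrac\mu2+\tfrac94)\log n-\log\gamma+O(1)$, which is consistent with the expansion of $\Ex Y_{\mu,n}$ in Proposition \ref{lem:CumulantBounds}; the $m_n$ of Theorem \ref{thm:LDP} has $-\tfrac n2(\log\pi+1)$ where this requires $-\tfrac n2(\log\pi+\log 2)$, so the two centrings differ by $\tfrac n2(1-\log 2)+O(1)$, a term of order $n$ rather than $O(1)$. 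With the printed $m_n$ your displayed limit is $\pm\infty$ for every $\lambda\neq 0$, since $\lambda\cdot\Theta(n)/w_n\to\pm\infty$. You must either correct $m_n$ (replace the second term by $-n\log\sqrt{2\pi}$, matching Proposition \ref{lem:CumulantBounds} and Theorem \ref{thm:ModPhi}) or accept that the statement cannot hold with the printed centring; asserting the match ``up to $O(1)$'' without checking it is a genuine gap, and the check refutes it. (The paper's own proof makes the same silent identification of the two centrings.)

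Second, the left edge. You are right --- and more careful than the paper's three-line proof, which simply asserts $D_\Lambda=\R$ --- that $\Ex V_n(Z_\mu)^\lambda=+\infty$ for $\lambda\le-\mu-3$ and that $\Lambda$ is not steep at the boundary. But your proposed repair cannot deliver the stated rate function on $(-\infty,-\mu-3)$: optimizing the Chernoff bound over the admissible range $\lambda\in(0,\mu+3)$ yields only the linear exponent $(\mu+3)y-(\mu+3)^2/2$ for $y>\mu+3$, strictly smaller than $y^2/2$, and the tilting lower bound toward the boundary (or the precise deviation estimates accompanying mod-$\phi$ convergence, which are likewise confined to the strip where $\psi$ is analytic) would show that this linear exponent is the true one, reflecting $\Pr(V_n(Z_\mu)\le\epsilon)\asymp\epsilon^{\mu+3}$. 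Consequently the LDP \emph{upper} bound with $I(x)=x^2/2$ fails on closed sets $(-\infty,-y]$ with $y>\mu+3$; what your plan would actually establish is the LDP with the non-steep rate $\Lambda^*$, equal to $x^2/2$ for $x\ge-\mu-3$ and to $(\mu+3)|x|-(\mu+3)^2/2$ below. Either restrict the conclusion to sets avoiding $(-\infty,-\mu-3)$ --- which still suffices for the right-tail statement of Theorem \ref{thm:Intro}(iii) --- or modify the rate function; as written, the proposal does not prove the theorem on all of $\R$.
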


Our proof of this result will rely on the G\"artner-Ellis theorem, see \cite[Section 2.3]{DZ}. Although this is a standard tool in large deviations theory, we reformulate it in order to keep our presentation self-contained.

\begin{lemma}[G\"artner-Ellis theorem]
Consider a sequence of random variables $(X_n)_{n\in\mathbb{N}}$ in $\R$ with logarithmic moment generating functions $\Lambda_n(t):=\log\Ex e^{tX_n}$, $t\in\R$. Let $(a_n)_{n\in\mathbb{N}}$ be a positive sequence such that $a_n\to\infty$, as $n\to\infty$. Assume that for each $t\in\R$ the limit
\[
\Lambda(t):=\lim_{n\rightarrow\infty}{1\over a_n}\Lambda_n(a_nt),
\]
exists as an extended real number. Also assume that $D_\Lambda:=\{t\in\R:\Lambda(t)<\infty\}=\R$ and that $\Lambda$ is differentiable on $D_\Lambda$. Then the sequence of random variables $X_n$ satisfies large deviations principle with speed $a_n$ and rate function $I(x)=\sup_{x\in\R}[xt-\Lambda(t)]$, the Legendre-Fenchel transform of $\Lambda$.
\end{lemma}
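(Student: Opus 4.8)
The plan is to prove the two matching bounds in the definition of a large deviations principle separately, after first recording the structural properties of $\Lambda$ and of its Legendre--Fenchel transform $I=\Lambda^*$. Since each $\Lambda_n$ is convex in $t$ (being a logarithmic moment generating function), the scaled limit $\Lambda$ is convex as well, and the hypotheses give $\Lambda(0)=0$, $D_\Lambda=\R$ and differentiability everywhere. From $\Lambda(0)=0$ one reads off $I(x)\ge 0$, and from the finiteness of $\Lambda$ at $\pm t_0$ for large $t_0$ one gets the super-linear lower bound $I(x)\ge t_0|x|-\max\{\Lambda(t_0),\Lambda(-t_0)\}$, so that $I$ is a \emph{good} rate function (lower semicontinuous with compact level sets). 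A short convexity computation shows that $I(x)=\tau x-\Lambda(\tau)$ whenever $x=\Lambda'(\tau)$ (the concave objective $xt-\Lambda(t)$ has vanishing derivative at $t=\tau$), and in particular that $\Lambda^*(y)=0$ iff $y=\Lambda'(0)$. Because $\Lambda$ is finite and differentiable on all of $\R$, the map $\Lambda'$ is continuous and monotone, so every interior point of $\{I<\infty\}$ is of this exposed form $x=\Lambda'(\tau)$; this is exactly the automatic essential smoothness that the differentiability hypothesis buys us.

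For the \textbf{upper bound} I would start from the exponential Chebyshev inequality: for $t>0$ and $x\in\R$,
\[
\Pr(X_n\ge x)\le e^{-a_ntx}\,\Ex e^{a_ntX_n}=e^{-a_ntx+\Lambda_n(a_nt)},
\]
whence $\limsup_n a_n^{-1}\log\Pr(X_n\ge x)\le -tx+\Lambda(t)$; optimizing over $t$, and arguing symmetrically for the left tail, yields the half-line estimate, which after covering a compact set by finitely many small intervals gives the upper bound on compact sets. To upgrade to arbitrary closed sets I would verify \emph{exponential tightness}: taking $t=\pm1$ in the inequality gives
\[
\limsup_n a_n^{-1}\log\Pr(|X_n|\ge M)\le -M+\max\{\Lambda(1),\Lambda(-1)\},
\]
which tends to $-\infty$ as $M\to\infty$ since $\Lambda(\pm1)<\infty$ by $D_\Lambda=\R$. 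Combining the compact upper bound with exponential tightness delivers the upper bound for every closed set.

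The \textbf{lower bound} is the substantive part, and I expect it to be the main obstacle. It suffices to show, for each exposed $x$ and each $\delta>0$, that $\liminf_n a_n^{-1}\log\Pr(X_n\in(x-\delta,x+\delta))\ge -I(x)$; taking suprema over the points of an open set, and invoking the density of exposed points in the interior of $\{I<\infty\}$ together with lower semicontinuity of the convex function $I$ to see that $\inf_G I$ is approximated by exposed points of $G$, then yields the lower bound. Fix such an $x$ and choose $\tau$ with $\Lambda'(\tau)=x$, so that $I(x)=\tau x-\Lambda(\tau)$. I would introduce the tilted laws $Q_n(\dd y)=e^{a_n\tau y-\Lambda_n(a_n\tau)}\,\Pr(X_n\in\dd y)$; since $\tau y\le\tau x+|\tau|\delta$ on the interval, bounding $e^{-a_n\tau y}$ from below gives
\[
\Pr(X_n\in(x-\delta,x+\delta))\ge e^{\Lambda_n(a_n\tau)-a_n\tau x-a_n|\tau|\delta}\,Q_n\big((x-\delta,x+\delta)\big),
\]
so that, using $a_n^{-1}\Lambda_n(a_n\tau)\to\Lambda(\tau)$ and $\Lambda(\tau)-\tau x=-I(x)$,
\[
\liminf_n a_n^{-1}\log\Pr(X_n\in(x-\delta,x+\delta))\ge -I(x)-|\tau|\delta+\liminf_n a_n^{-1}\log Q_n\big((x-\delta,x+\delta)\big).
\]

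It remains to show that the tilted family concentrates at $x$, so that the last term vanishes and $\delta\to0$ finishes the estimate. For this I would compute the scaled cumulant generating function under the tilt, $\tilde\Lambda(s):=\lim_n a_n^{-1}\log\mathbb{E}_{Q_n}e^{a_nsX_n}=\Lambda(\tau+s)-\Lambda(\tau)$, which is again finite and differentiable on $\R$ with $\tilde\Lambda'(0)=x$. Applying the \emph{already proven} upper bound to $Q_n$ furnishes an upper LDP bound with the good rate function $\tilde I(y)=I(y)-\tau y+\Lambda(\tau)$, whose unique zero is $x$ (by the criterion $\tilde\Lambda^*(y)=0\iff y=\tilde\Lambda'(0)$). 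Goodness of $\tilde I$ together with uniqueness of the zero forces $\rho:=\inf_{|y-x|\ge\delta}\tilde I(y)>0$, so that
\[
\limsup_n a_n^{-1}\log Q_n\big(\{|y-x|\ge\delta\}\big)\le -\rho<0,
\]
giving $Q_n((x-\delta,x+\delta))\to1$ as required. The delicate points where I would be most careful are precisely those two: first, that the differentiability and finiteness of $\Lambda$ on all of $\R$ make every interior point exposed and let $\inf_G I$ be realized along exposed points, and second, that $\tilde I$ has $x$ as its \emph{unique} zero, since it is this uniqueness that converts the tilted upper bound into genuine concentration.
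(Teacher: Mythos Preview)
The paper does not prove this lemma at all: it is stated as the classical G\"artner--Ellis theorem, with a reference to \cite[Section 2.3]{DZ}, and is then applied as a black box in the proof of Theorem~\ref{thm:LDP}. So there is no ``paper's own proof'' to compare your attempt against.

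That said, your sketch is a faithful outline of the standard proof one finds in Dembo--Zeitouni: the upper bound via exponential Chebyshev, a compactness/covering argument, and exponential tightness from $D_\Lambda=\R$; the lower bound via the Cram\'er tilt at an exposed point $x=\Lambda'(\tau)$, followed by applying the already-established upper bound to the tilted laws to get concentration at $x$. The two places you flag as delicate are indeed the right ones. For the first, your claim that ``every interior point of $\{I<\infty\}$ is exposed'' is correct here but deserves a line of justification: since $\Lambda$ is finite, convex and differentiable on all of $\R$, $\Lambda'$ is continuous and nondecreasing with range an interval $(a,b)$ (allowing $a=-\infty$, $b=+\infty$); a short computation shows $I(x)=\infty$ for $x\notin[a,b]$, so the interior of $\operatorname{dom} I$ equals $(a,b)$, which is exactly the range of $\Lambda'$. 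For the second, uniqueness of the zero of $\tilde I$ follows because $\tilde\Lambda$ inherits strict convexity at $0$ from differentiability of $\Lambda$ only when $\Lambda$ is strictly convex; in general one uses that $\tilde I(y)=0$ forces $y\in\partial\tilde\Lambda(0)=\{\tilde\Lambda'(0)\}=\{x\}$ by differentiability, which is what you wrote. With these small clarifications your argument is correct, but for the purposes of this paper a citation suffices.
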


\begin{proof}[Proof of Theorem \ref{thm:LDP}]
Theorem \ref{thm:ModPhi} ensures that, for each $t\in\R$,
\begin{align*}
\Lambda(t) := \lim_{n\rightarrow\infty}{1\over {1\over 2}\log\big({n\over 2}\big)}\log\Ex e^{t(Y_{\mu,n}-m_n)} = {t^2\over 2}.
\end{align*}
Thus $D_\Lambda=\R$ and $\Lambda$ is differentiable on $D_\Lambda$. Moreover, the Legendre-Fenchel transform of $\Lambda$ is also given by $I(x)=x^2/2$. The G\"artner-Ellis theorem thus yields the result.
\end{proof}

\bibliographystyle{plainnat}

\end{document}